\newcommand{\R}{\ensuremath{\mathbb{R}}}
\newcommand{\Z}{\ensuremath{\mathbb{Z}}}
\newcommand{\ep}{\ensuremath{\varepsilon}}
\newcommand{\al}{\ensuremath{\alpha}}
\newcommand{\de}{\ensuremath{\delta}}
\newcommand{\F}{\ensuremath{\mathbf{F}}}
\newcommand{\N}{\ensuremath{\mathbf{N}}}
\newtheorem{theorem}{Theorem}[section]
\newtheorem{lemma}[theorem]{Lemma}
\newtheorem{proposition}[theorem]{Proposition}
\newtheorem{corollary}[theorem]{Corollary}
\DeclareMathOperator{\supp}{supp}
\numberwithin{equation}{section}
\begin{document}

\title[A para-differential renormalization technique]{A
  para-differential renormalization technique for nonlinear dispersive
  equations}
\author[S.~Herr]{Sebastian~Herr} \address{Universit\"at
  Bonn}\email{herr@math.uni-bonn.de}

\author[A.D.~Ionescu]{Alexandru~D.~Ionescu} \address{University of
  Wisconsin--Madison}\email{ionescu@math.wisc.edu}

\author[C.E.~Kenig]{Carlos~E.~Kenig} \address{University of
  Chicago}\email{cek@math.uchicago.edu}

\author[H.~Koch]{Herbert~Koch} \address{Universit\"at
  Bonn}\email{koch@math.uni-bonn.de}

\thanks{S.H. was supported in part by the DFG, Sonderforschungsbereich 611. A. I. was supported in part by a Packard Fellowship. C. K.
  was supported in part by NSF grant DMS0456583. H.K. was supported in part by the DFG, Sonderforschungsbereich 611.}

\begin{abstract}
  For $\alpha\in(1,2)$ we prove that the initial-value problem
  \begin{equation*}
    \begin{cases}
      \partial_tu+D^\al\partial_xu+\partial_x(u^2/2)=0
      \text{ on }\mathbb{R}_x\times\mathbb{R}_t;\\
      u(0)=\phi,
    \end{cases}
  \end{equation*}
  is globally well-posed in the space of real-valued
  $L^2$-functions. We use a frequency dependent renormalization method
  to control the strong low-high frequency interactions.
\end{abstract}
\subjclass[2000]{Primary: 35Q53; Secondary: 35Q35, 35S50}

\maketitle
\tableofcontents

\section{Introduction}\label{section1}
In this paper we consider the initial-value problem for the
Benjamin-Ono equation with generalized dispersion
\begin{equation}\label{eq-1}
  \begin{cases}
    \partial_tu+D^\al\partial_xu+\partial_x(u^2/2)=0\text{ on }\mathbb{R}_x\times\mathbb{R}_t;\\
    u(0)=\phi,
  \end{cases}
\end{equation}
where $\alpha\in(1,2)$, and $D^\al$ denotes the operator defined by
the Fourier multiplier $\xi\to|\xi|^\alpha$. Let
$H^\sigma_r=H^\sigma_r(\mathbb{R})$, $\sigma\in[0,\infty)$, denote the
space of {\it{real-valued}} functions $\phi$ with the usual Sobolev
norm
\begin{equation*}
  \|\phi\|_{H^\sigma_r}=\|\phi\|_{H^\sigma}=\|(1+|\xi|^2)^{\sigma/2}\widehat{\phi}(\xi)\|_{L^2_\xi}.
\end{equation*} 
Let $H^\infty_r=\cap_{\sigma\in \Z_+} H^\sigma_r$ with the induced
metric. Suitable solutions of \eqref{eq-1} satisfy the $L^2$
conservation law: if $T_1<T_2\in\R$ and $u\in C((T_1,T_2):H^\infty_r)$
is a solution of the equation
$\partial_tu+D^\al\partial_xu+\partial_x(u^2/2)=0$ on
$\mathbb{R}\times(T_1,T_2)$ then
\begin{equation}\label{conserve}
  \|u(t_1)\|_{H^0_r}=\|u(t_2)\|_{H^0_r}\,\text{ for any }t_1,t_2\in(T_1,T_2).
\end{equation}
Our main theorem concerns the global well-posedness in $H^0_r$ of the
initial-value problem \eqref{eq-1}.
\begin{theorem}\label{Main1}
  (a) Assume $\phi\in H^\infty_r$. Then there is a unique global
  solution
  \begin{equation*}
    u=S^\infty(\phi)\in C(\R:H^\infty_r)
  \end{equation*}
  of the initial-value problem \eqref{eq-1}.

  (b) Assume $T\in\R_+$. Then the mapping
  \[S^\infty_T=\mathbf{1}_{(-T,T)}(t)\cdot S^\infty:\; H^\infty_r\to
  C((-T,T):H^\infty_r)\] extends uniquely to a continuous mapping
  \[S^0_T:H^0_r\to C((-T,T):H^0_r),\] and
  \begin{equation*}
    \|S^0_T(\phi)(t)\|_{H^0_r}=\|\phi\|_{H^0_r}\text{ for any }t\in(-T,T).
  \end{equation*}
\end{theorem}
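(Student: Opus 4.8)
The plan is to deduce Theorem~\ref{Main1} from three ingredients: local well-posedness in $H^\sigma_r$ for $\sigma$ large, a global-in-time a priori bound for such solutions obtained from the $L^2$ conservation law \eqref{conserve} together with a frequency-localized energy estimate, and a uniform-continuity argument at the $L^2$ level that permits extension by density. For part (a) I would first set up the local theory: writing \eqref{eq-1} in integral form and iterating in a resolution space built from the $X^{s,b}$-type norm adapted to the symbol $|\xi|^\al\xi$ together with the dispersive norms (local smoothing, maximal function, Strichartz) available for the group $e^{-tD^\al\partial_x}$, one obtains for $\sigma$ sufficiently large and $\phi\in H^\sigma_r$ a unique solution on a time interval of length $\delta=\delta(\|\phi\|_{H^\sigma_r})>0$, depending continuously on the data in $H^\sigma_r$. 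For smooth data this step can alternatively be run by a classical energy/compactness argument, and the delicate low-regularity issues play no role here.

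The core of the argument is the global a priori estimate. Given $\phi\in H^\infty_r$ and a solution $u\in C((T_1,T_2):H^\infty_r)$ of \eqref{eq-1}, the conservation law \eqref{conserve} controls $\|u(t)\|_{H^0_r}$ for all $t$; to bootstrap to $H^\sigma_r$ I would run a Littlewood--Paley-localized energy estimate, computing $\frac{d}{dt}\sum_k 2^{2\sigma k}\|P_k u(t)\|_{L^2}^2$ and sorting the resulting trilinear expressions by the relative sizes of the three interacting frequencies. The high-high$\to$low and balanced interactions are perturbative and are absorbed using the dispersive norms of $u$, themselves controlled by $\|\phi\|_{H^0_r}$ up to factors of the $H^\sigma_r$ norm with a small power. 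The one dangerous contribution is the low-high interaction, in which a very low frequency component of $u$ acts as a transport velocity on a high-frequency component; because the low frequencies of $u$ are controlled only in $L^2$, this term cannot be treated perturbatively. Here I would insert the para-differential renormalization: conjugate each dyadic piece $P_k u$ by an explicit phase built from the low-frequency profile of $u$ --- a para-differential refinement of the Tao-type gauge transform that handles the entire range of low frequencies simultaneously rather than a single cutoff --- so that the transport term is exactly cancelled in the evolution of the renormalized variable, and verify that the error terms produced (commutators, the contribution of the time derivative of the phase, and the invertibility of the change of variables with bounds depending only on $\|\phi\|_{H^0_r}$) are themselves perturbative. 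This yields $\|u(t)\|_{H^\sigma_r}\le C(\sigma,\|\phi\|_{H^0_r})\langle t\rangle^{N(\sigma)}\|\phi\|_{H^\sigma_r}$, which rules out finite-time blowup and, combined with local uniqueness iterated in time, gives the global solution $S^\infty(\phi)\in C(\R:H^\infty_r)$ of part (a).

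For part (b) I would first record the case $\sigma=0$ of the renormalized energy method: for every $T>0$ and every $\phi\in H^\infty_r$, the quantity $\|S^\infty(\phi)\|_{C((-T,T):H^0_r)}$ together with the relevant dispersive norms of $S^\infty(\phi)$ on $(-T,T)$ is bounded by $C(T,\|\phi\|_{H^0_r})$, uniformly over smooth data in a fixed $L^2$-ball. The second ingredient is a difference estimate: for $\phi_1,\phi_2\in H^\infty_r$ with $L^2$-norms $\le R$, the difference $w=S^\infty(\phi_1)-S^\infty(\phi_2)$ solves a linear equation whose coefficient involves $S^\infty(\phi_1)+S^\infty(\phi_2)$, and running the same renormalization on $w$ --- now with a gauge built from the \emph{common} low-frequency profile --- one obtains a bound $\|w\|_{C((-T,T):H^{-\de}_r)}\le C(T,R)\|\phi_1-\phi_2\|_{H^{-\de}_r}$ for a suitable $\de>0$; interpolating this loss-of-derivative estimate against the uniform $H^0_r$ bound, in the style of a Bona--Smith frequency-envelope argument, upgrades it to uniform continuity of $\phi\mapsto S^\infty(\phi)$ in the $H^0_r$ metric on bounded subsets of $H^\infty_r$. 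Since $H^\infty_r$ is dense in $H^0_r$ and $C((-T,T):H^0_r)$ is complete, $S^\infty_T$ then extends uniquely to a continuous map $S^0_T:H^0_r\to C((-T,T):H^0_r)$, and passing \eqref{conserve} to the limit through this continuity gives $\|S^0_T(\phi)(t)\|_{H^0_r}=\|\phi\|_{H^0_r}$.

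The main obstacle, as the discussion indicates, is precisely the low-high frequency interaction in the energy estimate, and more sharply the task of making the para-differential renormalization quantitative: one must choose the gauge phase so that it simultaneously (i) cancels the transport term, (ii) is bounded in operator norm by a function of $\|\phi\|_{H^0_r}$ alone, (iii) generates only perturbative error terms in an $L^2$-critical function space, and (iv) is compatible with the difference estimate of part (b), where the two solutions must be renormalized by a single common gauge. Reconciling these constraints is where essentially all of the work lies; everything else is either standard local theory or a routine density argument.
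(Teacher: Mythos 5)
Your outline identifies the right enemy (the low--high transport term) and the right general remedy (a gauge built from an antiderivative of the low frequencies), but the quantitative core of your renormalization step does not work as stated. You propose to conjugate each \emph{dyadic} piece $P_ku$ by a single multiplicative phase so that the transport term is \emph{exactly} cancelled. Exact cancellation by multiplication is special to $\alpha=1$: for $\alpha\in(1,2)$ the phase coefficient required at frequency $\xi$ is proportional to $\xi|\xi|^{-\alpha}$ (this is the $a_k=-n_k|n_k|^{-\alpha}/(\alpha+1)$ of \eqref{rh18}), so the true gauge is pseudo-differential. If you freeze it at one value $\xi_0$ per dyadic block, the residual term $\phi_{\mathrm{low}}\big[\partial_xv_k-i\xi_0|\xi_0|^{-\alpha}D^\alpha v_k\big]$ has symbol of size $\approx(\alpha-1)|\xi-\xi_0|$, which on a block of width $\sim|\xi_0|$ is again a full derivative: nothing has been gained, and the term remains non-perturbative. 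The decisive idea in the paper is the choice of frequency bands of width $|n_k|^{1/2}$ (the intervals $I_k$ in \eqref{rh30}--\eqref{rh31.1}), on which the residual in \eqref{rh21} carries only about $|n_k|^{1/2}$ derivatives and can be absorbed by the bilinear estimates (Lemma \ref{Lemmak1} and its companions). Two further design points differ from your sketch and matter for closing the estimates: the phase $\Psi$ is built from the low-frequency part of the \emph{initial datum} \eqref{fg9} (time-independent, so no contribution from $\partial_t$ of the gauge), and the analysis of the system \eqref{rh60} is a perturbative $X^{s,b}$-type argument in band-adapted spaces rather than a $\tfrac{d}{dt}$ Littlewood--Paley energy estimate. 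Without the $\sqrt{\lambda}$ band structure (or a genuinely pseudo-differential gauge together with all the attendant operator estimates) the central step of your plan fails.

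Part (b) of your proposal contains a second genuine gap. You claim a difference estimate for two arbitrary solutions ``with a gauge built from the common low-frequency profile'' and conclude \emph{uniform} continuity of the flow map on bounded subsets of $H^\infty_r$ in the $H^0_r$ metric. But two arbitrary data in a bounded set do not share a low-frequency profile, and the mechanism recalled around \eqref{eq:shift} --- an $L^2$-small change of the low frequencies acts over time as an $O(1)$ shift of the high-frequency part --- is exactly what rules out any difference estimate, in $H^{-\delta}$ or elsewhere, that is uniform over bounded sets; the flow map is not locally uniformly continuous, irrespective of the regularity of the data. Correspondingly, the paper's difference bound (Proposition \ref{Lemmaw3}, via \eqref{toshow2}) is proved only under the exact hypothesis $\phi'_{\mathrm{low}}=\phi_{\mathrm{low}}$, which is why one compares $\phi$ only with its own frequency truncation $\phi^N$; distinct truncated data $\phi_n^N,\phi_m^N$ are then compared by the crude Gronwall bound with constant $\exp(CN^2)$ as in \eqref{rg11}, and $N$ is chosen depending on the limiting datum. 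This shows that $S^\infty(\phi_n)$ is Cauchy for every sequence $\phi_n\to\phi$ in $H^0_r$, which suffices to define the (merely continuous, not uniformly continuous) extension $S^0_T$ and to pass \eqref{conserve} to the limit. As written, your density argument rests on a uniform continuity statement that is both unobtainable by your common-gauge estimate and contrary to the known behavior of this flow, so part (b) needs to be restructured along the truncation lines above.
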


One-dimensional models such as \eqref{eq-1} have been studied
extensively. The case $\alpha=2$ corresponds to the KdV equation,
while the case $\alpha=1$ corresponds to the Benjamin--Ono
equation. Global well-posedness in $H^0_r$ is known in both of these
cases, see \cite{Bo} and \cite{IoKe} respectively. Other local and
global well-posedness results for \eqref{eq-1} in Sobolev spaces
$H^s_r$ have been obtained by several authors, see \cite{KePoVe1,
  KePoVe2, KePoVe3, CoKeStTaTa, ChCoTa, Gu2} for the KdV case
$\alpha=2$, and \cite{Io, Po, KoTz, KeKo, Tao1} for the Benjamin--Ono
case $\alpha=1$.

The dispersion generalized model \eqref{eq-1} has also been analyzed
in the literature, see for example \cite{KePoVe1, CoKeSt, MoRi, He,
  Gu}. For example, local well-posedness in Sobolev spaces $H^s(\R)$
for $s>-3/4(\alpha-1)$, and global well-posedness in $H^s_r(\R)$ in
the range $s\geq 0$, has been shown by the first author \cite{He}
under an additional \emph{low frequency constraint} on the initial
data. Without this low frequency constraint, the Sobolev index for
local well-posedness has been pushed down to $s>2-\alpha$ by Z. Guo in
\cite{Gu}, using the method of \cite{IoKeTa}.

The nonlinearity of the dispersion generalized Benjamin-Ono equation
\eqref{eq-1} is too strong to allow a direct perturbative argument
(without a low frequency constraint) since the flow map is not locally
uniformly continuous in $H^s_r(\R)$, $ s> 0$. Problems with this
feature have attracted considerable interest in recent years. It is
not difficult to see the reason for this failure at the hand of the
model problem
\[
v_t+vv_x=0,
\]
see \cite[p.2]{KoTz}. Given a solution $v$ we obtain the family of
solutions
\begin{equation}\label{eq:shift}
  v_c(t,x):=v(t,x-ct)+c\; , \quad c\in \R.
\end{equation}
If $v(0,x)$ is of high frequency, the constant $c$ (the low frequency
part) induces a spatial shift of the high frequency part and the lack
of uniform dependence on the initial data becomes evident. The
construction of smooth, square-integrable analogs of \eqref{eq:shift}
for \eqref{eq-1} has been caried out in \cite{KoTz} in detail in the
case $\alpha=1$, see also \cite{MoSaTz}. We notice that the failure of
uniform continuity is irrespective of the regularity assumption which
is imposed on the initial data.

Tao has interpreted this phenomenon for the Benjamin-Ono equation as a
gauge change, which opened the path to the satisfactory well-posedness
result in \cite{IoKe} for the Benjamin-Ono equation, i.e. the case
$\alpha=1$. There the gauge change can be undone by a multiplication
of projection to positive frequencies of the solution by a function
$e^{i\phi}$. The linear part reduces to a Schr\"odinger equation for
positive frequencies, and commuting $e^{i\phi} $ by the Schr\"odinger
equation leads to a drift term which balances the worst low-high
interaction
\[ u_{\mathrm{low}} \partial_x u_{\mathrm{high}} \] of the quadratic
part.

The same ideas show that one encounters a pseudo-differential gauge
transform for the dispersion generalized Benjamin-Ono equation
\eqref{eq-1}. We do not pursue this pseudo-differential point of view,
but it is advisable to keep it in mind. Instead we decompose the
solution into small frequency bands of size $\sqrt{\lambda}$ at
frequency $\lambda$. At this frequency scale the gauge change is
essentially a multiplication by a purely imaginary phase function. We
carry out bilinear estimates for these frequency bands and study the
effect of the gauge transform. This is technical and painful. The main
contribution of this paper is the demonstration that this circle of
ideas can be carried through for a non trivial example. The phenomenon
described above will most likely be encountered at other problems as
well. Gaining a general understanding of it seems to be desirable and
this is our aim.

The rest of the paper is organized as follows: in section
\ref{estimates} we reduce Theorem \ref{Main1} to proving several
apriori bounds on smooth solutions, and differences of smooth
solutions, of the equation \eqref{eq-1}, on bounded time
intervals. This reduction relies on energy-type estimates.

In section \ref{construct} we construct our main renormalization,
which is the key step to further reducing the problem to perturbative
analysis. After subtracting the low frequency component of the
solution, which is essentially left unchanged by the evolution due to
the null structure of the nonlinearity, we further decompose the
solution into frequency blocks and multiply each frequency block by a
suitable bounded factor. This renormalization leads to an infinite
system of coupled equations satisfied by the frequency blocks. A
similar construction was used by two of the authors in \cite{IoKe} for
the Benjamin--Ono equation. However, in our situation, we need to
renormalize each frequency block by a different factor, which leads to
substantial technical difficulties in the perturbative analysis.

In section \ref{linear} we define our main normed spaces, and show
that the main theorem can be reduced to proving the nonlinear
estimates in Proposition \ref{Lemmat2}.

The remaining sections are concerned with the proof of Proposition
\ref{Lemmat2}. We prove first frequency-localized bilinear estimates,
see sections \ref{locL2} and \ref{Dyadic2}. Then we prove
frequency-localized linear estimates on operators defined by
multiplication by smooth bounded factors, see section \ref{mult}, and
frequency-localized commutator estimates, see section
\ref{commutatorest}. Finally, we put all these estimates together in
section \ref{mainlemma} and \ref{lastsection} to complete the proof of
Proposition \ref{Lemmat2}.

\section{Reduction to a priori estimates}\label{estimates}

We recall first a standard local well-posedness theorem:

\begin{proposition}\label{Lemmaw1}
  Assume $\phi\in H^\infty_r$. Then there is $T=T(
  \|\phi\|_{H^2_r})\in (0,1]$ and a unique solution
  $u=S^\infty_T(\phi)\in C( (-T,T):H^\infty_r)$ of the initial-value
  problem
  \begin{equation}\label{rg0}
    \begin{cases}
      \partial_tu+D^\al\partial_xu+\partial_x(u^2/2)=0\text{ on }\mathbb{R}_x\times(-T,T);\\
      u(0)=\phi.
    \end{cases}
  \end{equation}
  In addition, for any $\sigma\geq 2$,
  \begin{equation*}
    \sup_{t\in(-T,T)}\|u(t)\|_{H^\sigma_r}\leq C(\sigma,\|\phi\|_{H^\sigma_r},\sup_{t\in(-T,T)}\|u(t)\|_{H^2_r}).
  \end{equation*}
\end{proposition}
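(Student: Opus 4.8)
The plan is to prove this by the standard parabolic-regularization plus energy-method argument, since \eqref{eq-1} is a quasilinear dispersive equation with a quadratic (hence smooth) nonlinearity. First I would construct approximate solutions $u^\ep$ to the regularized problem obtained by adding a dissipative term $-\ep\partial_x^2 u^\ep$ (or $\ep D^{2\beta}u^\ep$ for a large even $\beta$) on the right-hand side; for each fixed $\ep>0$ this is a semilinear parabolic equation, so a contraction-mapping argument in $C([0,T_\ep]:H^2_r)$, using the smoothing of the heat semigroup together with the algebra property of $H^2_r(\R)$ to handle $\partial_x(u^2/2)$, produces a local solution on a time interval depending only on $\|\phi\|_{H^2_r}$ (the $\ep$-dissipation only helps). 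Then I would derive \emph{$\ep$-uniform} a priori energy estimates: differentiate $\|D^\sigma u^\ep(t)\|_{L^2}^2$ in time for integer $\sigma\ge 2$, integrate by parts so that the dispersive term $D^\al\partial_x$ drops out (it is skew-adjoint), and estimate the nonlinear contribution by commutator/Kato--Ponce type inequalities, crucially using $\sigma\ge 2>3/2$ so that $\|\partial_x u^\ep\|_{L^\infty}\lesssim\|u^\ep\|_{H^2}$ closes the differential inequality $\tfrac{d}{dt}\|u^\ep\|_{H^\sigma}^2\lesssim \|\partial_x u^\ep\|_{L^\infty}\|u^\ep\|_{H^\sigma}^2$. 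For $\sigma=2$ this gives a time $T=T(\|\phi\|_{H^2_r})$ on which $\sup_{(-T,T)}\|u^\ep(t)\|_{H^2_r}$ is bounded uniformly in $\ep$; for general $\sigma\ge 2$ the same Gronwall argument, with $\|\partial_x u^\ep\|_{L^\infty}\lesssim \sup_{t}\|u^\ep(t)\|_{H^2_r}$ now treated as a known quantity, yields the claimed bound $\sup_{(-T,T)}\|u^\ep(t)\|_{H^\sigma_r}\le C(\sigma,\|\phi\|_{H^\sigma_r},\sup_t\|u^\ep(t)\|_{H^2_r})$.

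Next I would pass to the limit $\ep\to 0$. By the uniform bounds, $\{u^\ep\}$ is bounded in $L^\infty((-T,T):H^\sigma_r)$ for every $\sigma$; a Bona--Smith type argument shows $\{u^\ep\}$ is Cauchy in $C((-T,T):L^2_r)$ (estimate $\partial_t\|u^{\ep}-u^{\ep'}\|_{L^2}^2$, the nonlinear difference term being controlled by $\|\partial_x(u^\ep+u^{\ep'})\|_{L^\infty}$ times the $L^2$-difference, plus the vanishing dissipative contributions), hence by interpolation Cauchy in $C((-T,T):H^{\sigma'}_r)$ for every $\sigma'<\sigma$. The limit $u$ lies in $C((-T,T):H^{\sigma'}_r)$ for all $\sigma'$, hence in $C((-T,T):H^\infty_r)$ after a standard argument upgrading weak continuity in the top norm to strong continuity (using the conserved-quantity / reversibility structure, or simply re-running the energy estimate forward and backward in time); it solves \eqref{rg0}, and it inherits the $H^\sigma$ bound.

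Finally, uniqueness follows from the same $L^2$ difference estimate applied to two $H^\infty_r$ solutions $u,v$ with the same data: $\tfrac{d}{dt}\|u-v\|_{L^2}^2\lesssim (\|\partial_x u\|_{L^\infty}+\|\partial_x v\|_{L^\infty})\|u-v\|_{L^2}^2$ and Gronwall. The main obstacle is the quasilinear loss of derivatives in the term $\partial_x(u^2/2)=u\,\partial_x u$: a naive energy estimate loses one derivative, and one must use the commutator structure $\int (D^\sigma(u\partial_x u) - u\,\partial_x D^\sigma u)\,D^\sigma u\,dx$ together with the Kato--Ponce commutator estimate to recover it — this, and ensuring all constants in the $\ep$-regularized problem are genuinely $\ep$-independent, are the only non-routine points; everything else is textbook.
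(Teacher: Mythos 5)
The paper offers no proof of Proposition \ref{Lemmaw1} at all --- it is simply recalled as a standard local well-posedness theorem --- and your parabolic-regularization/energy/Bona--Smith scheme is exactly the standard argument it implicitly invokes, with the essential points (skew-adjointness of $D^\al\partial_x$, Kato--Ponce commutator estimates with $\sigma\geq 2>3/2$ to close the Gronwall inequality, higher norms controlled via the $H^2$ norm, and the $L^2$ difference estimate for uniqueness) correctly identified. Two bookkeeping remarks only: the regularizing term must enter with the dissipative sign (e.g. $\partial_tu^\ep+D^\al\partial_xu^\ep+\partial_x((u^\ep)^2/2)=\ep\partial_x^2u^\ep$), and the fixed-$\ep$ contraction time does depend on $\ep$ (the heat smoothing used to absorb the derivative in the nonlinearity costs a factor $(\ep t)^{-1/2}$), so the $\ep$-independent time $T(\|\phi\|_{H^2_r})$ is reached only after continuing the regularized solutions by means of your uniform-in-$\ep$ energy estimates.
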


Most of the paper is concerned with proving suitable {\it{a priori}}
estimates on the solutions $S^\infty_T$ constructed in Proposition
\ref{Lemmaw1}. For Theorem \ref{Main1} (a) we need the following
estimate:

\begin{proposition}\label{Lemmaw2}
  There is a constant $\varepsilon_0=\ep_0(\alpha)>0$ with the
  property that if $T\in(0,1]$, $\phi\in H^\infty_r$,
  \begin{equation}\label{rg5}
    \|\phi\|_{H^0_r}\leq \varepsilon_0,
  \end{equation}
  and $u=S^\infty_T(\phi)\in C( (-T,T):H^\infty_r)$ is a solution of
  the initial-value problem \eqref{rg0}, then
  \begin{equation}\label{rg6}
    \sup_{t\in(-T,T)}\|u(t)\|_{H^2_r}\leq C\|\phi\|_{H^2_r}.
  \end{equation}
\end{proposition}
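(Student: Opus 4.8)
The plan is to prove the a priori bound \eqref{rg6} by a bootstrap (continuity) argument in $t$, using energy-type estimates for the $H^2_r$-norm that exploit the smallness \eqref{rg5} of the $L^2$-norm together with the $L^2$ conservation law \eqref{conserve}. Set $E(t)=\|u(t)\|_{H^2_r}^2$; since $u\in C((-T,T):H^\infty_r)$, $E$ is continuous, and $E(0)=\|\phi\|_{H^2_r}^2$. The goal is to show $E(t)\leq C\|\phi\|_{H^2_r}^2$ on $(-T,T)$, and the strategy is to differentiate a suitable energy in time, estimate the resulting trilinear term, and close the estimate using the conserved quantity $\|u(t)\|_{H^0_r}=\|\phi\|_{H^0_r}\leq\ep_0$.

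Concretely, first I would differentiate $\|u(t)\|_{H^0_r}^2$ (which is constant, reproving \eqref{conserve} for completeness) and then $\|D^2u(t)\|_{L^2}^2$. Using the equation $\partial_tu=-D^\al\partial_xu-u\partial_xu$, the dispersive term $D^\al\partial_xu$ is skew-adjoint and contributes nothing, so
\[
\frac12\frac{d}{dt}\|D^2u\|_{L^2}^2=-\int_\R D^2u\cdot D^2(u\partial_xu)\,dx.
\]
The worst term after expanding $D^2(u\partial_xu)$ by the Leibniz rule is the one where all derivatives fall on a single factor, namely $\int u\,\partial_x D^2 u\cdot D^2u\,dx$, which by integration by parts equals $-\tfrac12\int \partial_xu\,(D^2u)^2\,dx$ and is harmless, bounded by $\|\partial_x u\|_{L^\infty}\|D^2u\|_{L^2}^2$. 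The genuinely dangerous contributions are the intermediate commutator terms involving fractional derivatives $D^2$ distributed between the two factors; these I would control with the Kenig–Ponce–Vega commutator estimate (or a standard fractional Leibniz/Kato–Ponce inequality), ultimately bounding $\bigl|\tfrac{d}{dt}\|u\|_{H^2_r}^2\bigr|\lesssim \|u\|_{W^{1,\infty}}\|u\|_{H^2_r}^2$. One then uses the Sobolev-type interpolation $\|u\|_{W^{1,\infty}}\lesssim \|u\|_{H^0_r}^{1-\theta}\|u\|_{H^2_r}^{\theta}$ for an appropriate $\theta\in(0,1)$ (valid since $2>3/2$), together with $\|u\|_{H^0_r}\leq\ep_0$, to obtain
\[
\frac{d}{dt}E(t)\leq C\,\ep_0^{1-\theta}\,E(t)^{1+\theta/2}.
\]

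Now comes the bootstrap: suppose on some maximal subinterval $E(t)\leq 4E(0)$; then $\tfrac{d}{dt}E\leq C\ep_0^{1-\theta}(4E(0))^{\theta/2}E$, and Gronwall on $(-T,T)$ with $T\leq1$ gives $E(t)\leq E(0)\exp\bigl(2C\ep_0^{1-\theta}(4E(0))^{\theta/2}\bigr)$. The difficulty is that $E(0)=\|\phi\|_{H^2_r}^2$ is not assumed small, so the exponent is not obviously small. I expect the resolution is that one does not bootstrap the full $H^2$ norm against itself but rather runs the energy estimate at the lowest level first, exploiting that smallness in $L^2$ propagates (by \eqref{conserve}, exactly), and that the nonlinear term in the $H^2$ energy estimate is actually \emph{linear} in the high-regularity norm $E$ once one factor of $\|u\|_{L^\infty}$ or $\|u\|_{H^1}$ is peeled off; then I would need a separate, preliminary bound on $\sup_t\|u(t)\|_{H^1_r}$ in terms of $\|\phi\|_{H^1_r}$ and $\ep_0$, obtained the same way, so that the time-growth factor $\exp(CT\sup_t\|u(t)\|_{W^{1,\infty}})$ is controlled purely by $\ep_0$ and $T\leq1$ — not by $\|\phi\|_{H^2_r}$ — giving the clean linear-in-$\|\phi\|_{H^2_r}$ conclusion \eqref{rg6}.

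The main obstacle, therefore, is arranging the hierarchy of estimates so that the constant multiplying $\|\phi\|_{H^2_r}$ in \eqref{rg6} depends only on $\alpha$ (through $\ep_0$) and not on $\|\phi\|$ itself: this forces the energy inequality for the top norm to be \emph{linear} in $E$, with coefficient controlled by lower norms that are in turn small because of \eqref{rg5} and \eqref{conserve}. The fractional-derivative bookkeeping in the commutator term — making sure that when $D^2$ is split one always retains a low-regularity factor that can absorb the smallness — is the technical heart, and I would handle it with a Littlewood–Paley decomposition, treating high-high, high-low, and low-high frequency interactions separately; the low-high interaction $u_{\mathrm{low}}\partial_xu_{\mathrm{high}}$ is precisely the one flagged in the introduction as problematic, but at the level of a priori $H^2$ bounds (as opposed to uniform continuity of the flow) it is still benign because the derivative lands favorably after integration by parts.
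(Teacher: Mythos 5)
There is a genuine gap, and it is exactly at the point you flag as the ``main obstacle'': your scheme never closes, because the Gronwall coefficient in the $H^2$ energy inequality is a high-regularity quantity that the hypotheses \eqref{rg5} and \eqref{conserve} cannot make small. The energy estimate itself, $\tfrac{d}{dt}\|u\|_{H^2_r}^2\lesssim \|\partial_x u\|_{L^\infty}\|u\|_{H^2_r}^2$, is correct, but $\|\partial_x u\|_{L^\infty}$ requires strictly more than $3/2$ derivatives: it is not controlled by the conserved $L^2$ norm, nor by an $H^1$ bound (there is no embedding $H^1\hookrightarrow W^{1,\infty}$), so the proposed ``preliminary $H^1$ step'' does not break the circularity --- the $H^1$ energy inequality carries the same coefficient $\|\partial_x u\|_{L^\infty}$ and is no easier. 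If instead you interpolate, $\|u\|_{W^{1,\infty}}\lesssim\|u\|_{L^2}^{1-\theta}\|u\|_{H^2}^{\theta}$ with $\theta>3/4$, you get the superlinear ODE $\dot E\lesssim \ep_0^{1-\theta}E^{1+\theta/2}$, whose guaranteed lifespan is $\sim \ep_0^{-(1-\theta)}E(0)^{-\theta/2}$ and therefore shrinks to zero as $\|\phi\|_{H^2_r}\to\infty$; since $\ep_0$ must depend only on $\alpha$ and the conclusion \eqref{rg6} must hold up to time $T\le 1$ with a constant independent of $\|\phi\|_{H^2_r}$, the bootstrap cannot be closed this way. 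What is really needed is a bound on $\int_{-T}^{T}\|\partial_x u(t)\|_{L^\infty}\,dt$ (or a substitute) by a quantity depending only on $\ep_0$, and that is a dispersive/bilinear statement, not an energy one; the low-high interaction $u_{\mathrm{low}}\partial_x u_{\mathrm{high}}$ is precisely \emph{not} benign at this level --- it produces, over times of order one, an order-one phase/transport effect driven by the low frequencies, which is why the paper does not attempt any direct energy or perturbative argument for Proposition \ref{Lemmaw2}.

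The paper's route is entirely different: it subtracts $\phi_{\mathrm{low}}$, decomposes the remainder into the frequency bands $I_k$ of width $\sim|n_k|^{1/2}$, and renormalizes each block by the phase $e^{-ia_k\Psi}$ with $a_k=-n_k|n_k|^{-\alpha}/(\alpha+1)$ (section \ref{construct}), so that the dangerous term $\phi_{\mathrm{low}}\partial_x P_k v$ is cancelled in the equations \eqref{rh60} for $v_k$. The $H^2$ bound is then obtained perturbatively for this system in the spaces $\F^\sigma$, $\N^\sigma$ of section \ref{linear}: the linear estimates (Propositions \ref{Lemmab1} and \ref{Lemmab3}), the nonlinear estimate of Proposition \ref{Lemmat2}, and a continuity argument in $T'$ via $\mathcal{N}(T')$ first give $\sum_k\|v_k\|^2_{\F^0(T)}\lesssim\ep_0^2$, then $\sum_k\|v_k\|^2_{\F^2(T)}\lesssim\|\phi\|^2_{H^2}$, and \eqref{rg6} follows from \eqref{hh80} and the reconstruction formula \eqref{rh50}. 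So the smallness $\ep_0$ enters through bilinear $X^{s,b}$-type estimates on the renormalized blocks, not through an interpolation against the conserved $L^2$ norm; without some version of that machinery (or another source of an integrable-in-time $L^\infty_x$ bound on $\partial_x u$), your argument cannot yield \eqref{rg6} with a constant independent of $\|\phi\|_{H^2_r}$.
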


Theorem \ref{Main1} (a) follows easily from Proposition \ref{Lemmaw2}:
by scaling (i.e. replace $\phi$ by $\phi_\lambda(x)=\lambda
^\alpha\phi(\lambda x)$ and $u$ by $u_\lambda (x,t)=\lambda ^\alpha
u(\lambda x,\lambda^{\alpha+1}t)$, $\lambda \ll 1$) it suffices to
prove Theorem \ref{Main1} (a) for $\phi\in H^\infty_r$ with
$\|\phi\|_{H^0_r}\leq \ep_0$. Using Proposition \ref{Lemmaw1} and
Proposition \ref{Lemmaw2}, we can construct the solution
$S^\infty_1(\phi)\in C( (-1,1):H^\infty_r)$. Finally, we use the
conservation law \eqref{conserve} to extend the solution to the entire
real line.

To prove Theorem \ref{Main1} (b) we need an additional estimate on
differences of solutions. For any $\phi\in H^\infty_r$ and
$N\in[1,\infty)$ let
$\phi^{N}=\mathcal{F}^{-1}[\widehat{\phi}(\xi)\cdot
\mathbf{1}_{[-N,N]}(\xi)]\in H^\infty_r$.

\begin{proposition}\label{Lemmaw3}
  Assume $N\in [2,\infty)$, $\phi\in H^\infty_r$, and
  $\|\phi\|_{H^0_r}\leq \varepsilon_0$ (see \eqref{rg5}). Then
  \begin{equation}\label{rg8}
    \sup_{t\in(-1,1)}\|S^\infty(\phi)(t)-S^\infty(\phi^N)(t)\|_{H^0_r}\leq C\|\phi-\phi^N\|_{H^0_r}. 
  \end{equation}
\end{proposition}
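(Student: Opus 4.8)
The plan is to derive \eqref{rg8} from the a priori estimates established for the renormalized system, applied simultaneously to the two solutions $u=S^\infty(\phi)$ and $v=S^\infty(\phi^N)$, and to the difference $w=u-v$. First I would observe that $w$ solves a linear equation with variable coefficient, namely $\partial_t w + D^\al\partial_x w + \partial_x\big((u+v)w/2\big)=0$, with initial data $w(0)=\phi-\phi^N$. Since $\|\phi\|_{H^0_r}\le\ep_0$ and $\|\phi^N\|_{H^0_r}\le\|\phi\|_{H^0_r}\le\ep_0$, the conservation law \eqref{conserve} gives uniform smallness of $\|u(t)\|_{H^0_r}$ and $\|v(t)\|_{H^0_r}$ on $(-1,1)$, so both $u$ and $v$ lie in the small-data regime where the renormalized analysis of sections \ref{construct}--\ref{lastsection} applies. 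The point of those sections is precisely that, after subtracting the (essentially static) low-frequency part and conjugating each frequency block by a bounded gauge factor, the equation becomes amenable to perturbative estimates in the normed spaces of section \ref{linear}; Proposition \ref{Lemmat2} packages the resulting nonlinear bounds. I would invoke the difference version of those bounds — the same bilinear, multiplier and commutator estimates of sections \ref{locL2}--\ref{commutatorest}, but with one factor replaced by $w$ — to close a bound $\|w\|_{F^0}\lesssim \|w(0)\|_{H^0_r}$ on the difference, where $F^0$ is the relevant solution space, using that the coefficient $u+v$ has small $L^2$ norm and controlled $F$-type norm (again from Proposition \ref{Lemmat2} applied to $u$ and $v$ individually).

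The key technical steps, in order: (i) set up the renormalization for $w$ using the \emph{same} gauge factors that diagonalize the equations for $u$ and $v$, so that the low-high interaction $u_{\mathrm{low}}\partial_x w_{\mathrm{high}}$ (and $v_{\mathrm{low}}\partial_x w_{\mathrm{high}}$) is cancelled by the drift term exactly as for a single solution — here one must check that the difference of the two renormalizations is itself controlled by $\|\phi-\phi^N\|_{H^0_r}$, since $u$ and $v$ are being conjugated by \emph{a priori} different factors; (ii) run the fixed-point/continuity argument for the linear difference equation in the $F^0$-space, treating $\partial_x((u+v)w/2)$ and the gauge-commutator terms as perturbations, using the estimates of Proposition \ref{Lemmat2}; (iii) conclude $\sup_{t\in(-1,1)}\|w(t)\|_{H^0_r}\le C\|\phi-\phi^N\|_{H^0_r}$ from the embedding of $F^0$ into $C((-1,1):H^0_r)$.

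The main obstacle I anticipate is step (i): because each frequency block is renormalized by a \emph{different} bounded factor (as the authors emphasize, unlike the single-factor conjugation in \cite{IoKe}), the difference of the renormalized solutions does not simply renormalize the difference $w$; one gets extra terms of the schematic form $(\text{gauge}_u - \text{gauge}_v)\cdot(\text{block of }v)$, and these must be shown to be both small and of the right frequency-localized structure to be absorbed. Controlling $\text{gauge}_u-\text{gauge}_v$ requires a Lipschitz-type bound on the renormalization map in terms of the low-frequency data, which in turn rests on the null structure of the nonlinearity (the low-frequency component evolving only by a transport term). Once this Lipschitz dependence is in hand, the rest is a routine — if lengthy — rerun of the a priori estimates with $w$ in one slot, and the smallness hypothesis $\|\phi\|_{H^0_r}\le\ep_0$ guarantees the perturbative series converges on the full interval $(-1,1)$.
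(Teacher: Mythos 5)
Your overall strategy (renormalize, use the difference bounds of Proposition \ref{Lemmat2}, conclude via the embedding $\F^0\hookrightarrow C(\R:\widetilde{H}^0)$ and \eqref{rh50}) is the paper's, but your step (i) contains a genuine gap, and it is exactly the point where the paper's argument is organized so that the difficulty never arises. You treat $u=S^\infty(\phi)$ and $v=S^\infty(\phi^N)$ as conjugated by a priori \emph{different} gauge factors and make the proof hinge on a Lipschitz-type bound for the renormalization map in terms of the low-frequency data, which you do not prove. No such bound is needed: the gauge $\Psi$ and the coefficients $a_k$ are built from $\phi_{\mathrm{low}}=\mathcal{F}^{-1}[\widehat{\phi}\cdot\mathbf{1}_{[-1/2,1/2]}]$ only, and since $N\geq 2$ the truncation satisfies $(\phi^N)_{\mathrm{low}}=\phi_{\mathrm{low}}$. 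Hence both solutions are renormalized with the \emph{identical} factors $e^{ia_k\Psi}$; this is precisely the hypothesis \eqref{ly1} under which the difference estimate \eqref{toshow2} was formulated. The paper simply subtracts the two renormalized systems \eqref{ro20} and \eqref{ro2.2}, so that $v'_k-v_k$ solves a linear system with right-hand side $R'_k-R_k$ and data $e^{-ia_k\Psi}P_k((\phi^N-\phi)_{\mathrm{high}})$, applies Propositions \ref{Lemmab1} and \ref{Lemmab3} together with \eqref{toshow2}, \eqref{ro60} and the already established bound \eqref{ro62} to obtain $\sum_k\|v'_k-v_k\|^2_{\F^0(T)}\leq C\|\phi^N-\phi\|^2_{L^2}$ for $\ep_0$ small, and then recovers $u'-u$ from \eqref{rh50} and \eqref{hh80}. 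Terms of your schematic form $(\mathrm{gauge}_u-\mathrm{gauge}_v)\cdot(\text{block of }v)$ never appear.

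Without this observation your proposal does not close: the Lipschitz control you postulate is not a routine lemma. The gauge discrepancy would be $e^{ia_k\Psi}-e^{ia_k\Psi'}$ with $\Psi-\Psi'$ an antiderivative of the difference of the low-frequency parts, which is not controlled by $\|\phi-\phi'\|_{H^0_r}$ in any uniform sense; low-frequency perturbations shifting the gauge is exactly the mechanism behind the failure of uniform continuity of the flow map (see the discussion around \eqref{eq:shift}), and it is to avoid this that Proposition \ref{Lemmaw3} is stated only for the pair $(\phi,\phi^N)$ with $N\geq 2$ rather than for two arbitrary small data, and that \eqref{toshow2} carries the hypothesis $\phi'_{\mathrm{low}}=\phi_{\mathrm{low}}$. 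The missing idea is thus the elementary but essential remark that frequency truncation at $N\geq 2$ leaves the low-frequency part, hence the entire renormalization, unchanged; once you add it, your steps (ii)--(iii) reduce to the paper's proof.
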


We show now that Proposition \ref{Lemmaw2} and Proposition
\ref{Lemmaw3} imply Theorem \ref{Main1} (b). Assume $\phi\in H^0_r$ is
fixed,
\begin{equation*}
  \phi_n\in H^\infty_r\text{ and }\lim_{n\to\infty}\phi_n=\phi\text{ in }H^0_r. 
\end{equation*}
For Theorem \ref{Main1} (b) it suffices to prove that the sequence
$S^\infty_T(\phi_n)\in C((-T,T):H^\infty_r)$ is a Cauchy sequence in
$C((-T,T):H^0_r)$. By scaling, we may assume $\|\phi\|_{H^0_r}\leq
\ep_0/2$. Using the conservation law \eqref{conserve} it suffices to
prove that for any $\de>0$ there is $M_\de$ such that
\begin{equation}\label{rg9}
  \sup_{t\in(-1,1)}\|S^\infty(\phi_m)(t)-S^\infty(\phi_n)(t)\|_{H^0_r}\leq\de\text{ for any }m,n\geq M_\delta.
\end{equation}
We observe now that $\|\phi_n-\phi_n^N\|_{H^0_r}\leq
\|\phi-\phi^N\|_{H^0_r}+\|\phi-\phi_n\|_{H^0_r}$. Thus we can fix
$N=N(\delta,\phi)\geq 2$ and $M^1_\delta$ such that
$\|\phi_n-\phi_n^N\|_{H^0_r}\leq \delta/(4C)$ and $\|\phi_n\|\leq
\ep_0$ for any $n\geq M^1_\delta$, where $C$ is the constant in
\eqref{rg8}. Thus, using \eqref{rg8},
\begin{equation}\label{rg10}
  \sup_{t\in(-1,1)}\|S^\infty(\phi_n)(t)-S^\infty(\phi_n^N)(t)\|_{H^0_r}\leq\de/4\text{ for any }n\geq M^1_\delta.
\end{equation}

It remains to estimate
\begin{equation*}
  \sup_{t\in(-1,1)}\|S^\infty(\phi_n^N)(t)-S^\infty(\phi_m^N)(t)\|_{H^0_r}.
\end{equation*} 
Using standard energy estimates for the difference equation, we have
\begin{equation*}
  \begin{split}
    &\sup_{t\in(-1,1)}\|S^\infty(\phi_n^N)(t)-S^\infty(\phi_m^N)(t)\|_{H^0_r}\\
    &\leq \|\phi_n^N-\phi_m^N\|_{H^0_r}\cdot \exp\Big(C\int_{-1}^1\|\partial_x(S^\infty(\phi_n^N))(t)\|_{L^\infty_x}+\|\partial_x(S^\infty(\phi_m^N))(t)\|_{L^\infty_x}\,dt\Big)\\
    &\leq \|\phi_n-\phi_m\|_{H^0_r}\cdot
    \exp\Big(C\sup_{t\in(-1,1)}\big[\|S^\infty(\phi_n^N)(t)\|_{H^2_r}+\|S^\infty(\phi_m^N)(t)\|_{H^2_r}\big]\Big).
  \end{split}
\end{equation*}
Using Proposition \ref{Lemmaw2}, it follows that
\begin{equation}\label{rg11}
  \begin{split}
    \sup_{t\in(-1,1)}&\|S^\infty(\phi_n^N)(t)-S^\infty(\phi_m^N)(t)\|_{H^0_r}\\
    &\leq \|\phi_n-\phi_m\|_{H^0_r}\cdot \exp\Big(C[ \|\phi_n^N\|_{H^2_r}+\|\phi_m^N\|_{H^2_r}]\Big)\\
    &\leq \|\phi_n-\phi_m\|_{H^0_r}\cdot \exp(CN^2).
  \end{split}
\end{equation}
The bound \eqref{rg9} follows from \eqref{rg10} and \eqref{rg11}.

This completes the proof of Theorem \ref{Main1}. The rest of the paper
is concerned with proving Proposition \ref{Lemmaw2} and Proposition
\ref{Lemmaw3}.

\section{The main renormalization}\label{construct}

The initial-value problem \eqref{eq-1} cannot be analyzed
perturbatively, due to the strong interactions between very low and
high frequencies. In this section we construct a para-differential
renormalization which allows us to recover information about the
solution $u$ of \eqref{eq-1} indirectly, by analyzing perturbatively a
system of equations satisfied by suitably renormalized frequency
components of $u$, see \eqref{rh60} and \eqref{rh50}.

Assume in this section that $T\in(0,1]$ and $u\in C(
(-T,T):H^\infty_r)$ is a solution of the initial-value problem
\begin{equation}\label{rh0}
  \begin{cases}
    \partial_tu+D^\al\partial_xu+\partial_x(u^2/2)=0\text{ on }\mathbb{R}_x\times(-T,T);\\
    u(0)=\phi.
  \end{cases}
\end{equation}
Assume, in addition, that $\|\phi\|_{H^0_r}\leq \ep_0$, for some
sufficiently small constant $\ep_0$ (compare with \eqref{rg5}). Let
$\phi_{\mathrm{low}}=\mathcal{F}^{-1}[\widehat{\phi}(\xi)\cdot
\mathbf{1}_{[-1/2,1/2]}(\xi)]\in H^\infty_r$ and
$\phi_{\mathrm{high}}=\phi-\phi_{\mathrm{low}}$. Let
$v(x,t)=u(x,t)-\phi_{\mathrm{low}}(x)$, so
\begin{equation}\label{rh1}
  \begin{cases}
    \partial_tv+D^\al\partial_xv+\phi_{\mathrm{low}}\cdot \partial_xv=-\partial_x(v^2/2)-v\cdot \partial_x\phi_{\mathrm{low}}-D^\al\partial_x\phi_{\mathrm{low}}-\partial_x(\phi_{\mathrm{low}}^2/2);\\
    v(0)=\phi_{\mathrm{high}}.
  \end{cases}
\end{equation}
on $\mathbb{R}_x\times(-T,T)$.

We fix the increasing sequence $\{n_k\}_{k\in\Z}$
\begin{equation}\label{rh30}
  \begin{cases}
    n_0=0,\,\,n_1=4,\,\,n_{k+1}=n_k+n_k^{1/2}\text{ for }k=1,2,\ldots;\\
    n_{-k}=-n_k\text{ for }k=1,2,\ldots.
  \end{cases}
\end{equation}
Clearly, $|n_k|\approx |k|^2$ for $|k|\geq 1$. We fix also smooth
functions $\chi_k:\R\to[0,1]$, $k\in\Z$, $\chi_k$ supported in
$[(2n_{k-1}+n_k)/3,(2n_{k+1}+n_k)/3]$ such that
\begin{equation}\label{rh31}
  \begin{cases}
    &\sum_{k\in\Z}\chi_k\equiv 1;\\
    &|\partial_\xi^\sigma\chi_k|\leq C(1+|n_k|)^{-\sigma/2}\text{ for
      any } k\in\Z\text{ and }\sigma=0,1,2.
  \end{cases}
\end{equation}
For $k\in\Z$ let
\begin{equation}\label{rh31.1}
  I_k=[(5n_{k-1}+n_k)/6,(5n_{k+1}+n_k)/6].
\end{equation}
Let $P_k$ and $\widetilde{P}_k$, $k\in\Z$, denote the operators
defined by the Fourier multipliers $\chi_k$ and $\mathbf{1}_{I_k}$
respectively.

We apply the operators $P_k$, $k\in\Z\setminus\{0\}$, to the identity
\eqref{rh1}; the result is
\begin{equation}\label{rh2}
  \begin{cases}
    \partial_t(P_{k}v)+D^\al\partial_x(P_{k}v)+\phi_{\mathrm{low}}\cdot \partial_x(P_{k}v)=E_{k}\text{ on }\R_x\times(-T,T);\\
    (P_{k}v)(0)=P_{k}(\phi_{\mathrm{high}}),
  \end{cases}
\end{equation}
where
\begin{equation}\label{rh3}
  E_{k}=[\phi_{\mathrm{low}}\cdot \partial_x(P_{k}v)-P_{k}(\phi_{\mathrm{low}}\cdot \partial_xv)]-P_{k}\partial_x(v^2/2)-P_{k}(v\cdot \partial_x\phi_{\mathrm{low}}).
\end{equation}
We apply also the operator $P_0$ to the identity \eqref{rh1}; the
result is
\begin{equation}\label{rh4}
  \begin{cases}
    \partial_t(P_0v)+D^\al\partial_x (P_0v)=R_0\text{ on }\R_x\times(-T,T);\\
    (P_0v)(0)=P_0(\phi_{\mathrm{high}}),
  \end{cases}
\end{equation}
where
\begin{equation}\label{rh5}
  \begin{split}
    R_0=&-P_0(\phi_{\mathrm{low}}\cdot \partial_xv)-P_0\partial_x(v^2/2)\\
    &-P_0(v\cdot
    \partial_x\phi_{\mathrm{low}})-D^\al\partial_xP_0(\phi_{\mathrm{low}})-P_0\partial_x(\phi_{\mathrm{low}}^2/2).
  \end{split}
\end{equation}

We define the smooth function $\Psi:\mathbb{R}\to\mathbb{R}$ as the
anti-derivative of $\phi_{\mathrm{low}}$,
\begin{equation}\label{fg9}
  \Psi'(x)=\phi_{\mathrm{low}}(x)\text{ and }\Psi(0)=0.
\end{equation}
For $k\in\Z\setminus\{0\}$ we define the functions
\begin{equation}\label{rh18}
  v_{k}(x,t)=P_{k}(v)(x,t)\cdot e^{-ia_{k}\Psi(x)} \text{ where }a_k=-n_k|n_k|^{-\al}/(\alpha+1).
\end{equation} 
We substitute the identity $P_{k}(v)=e^{ia_{k}\Psi}v_{k}$ into
\eqref{rh2}; the result is
\begin{equation}\label{rh8}
  e^{ia_k\Psi}\partial_tv_k+e^{ia_k\Psi}D^\al\partial_x(v_k)+(\alpha+1)D^\alpha v_k\cdot (ia_k\Psi')e^{ia_k\Psi}+\phi_{\mathrm{low}}e^{ia_k\Psi}\partial_x(v_k)=E'_{k}
\end{equation}
where
\begin{equation*}
  \begin{split}
    E'_k&=[e^{ia_k\Psi}D^\al\partial_x(v_k)+(\alpha+1)D^\alpha v_k\cdot (ia_k\Psi')e^{ia_k\Psi}-D^\alpha\partial_x(e^{ia_k\Psi}v_k)]\\
    &+E_k-\phi_{\mathrm{low}}(ia_k\Psi')e^{ia_k\Psi}\cdot v_k.
  \end{split}
\end{equation*}
We multiply \eqref{rh8} by $e^{-ia_k\Psi}$. Using the definition
\eqref{rh18} of the coefficients $a_k$, it follows that
\begin{equation}\label{rh20}
  \begin{cases}
    \partial_tv_k+D^\al\partial_x(v_k)=R_k\text{ on }\R_x\times(-T,T);\\
    v_k(0)=e^{-ia_k\cdot \Psi}\cdot P_k(\phi_{\mathrm{high}}),
  \end{cases}
\end{equation}
where
\begin{equation}\label{rh21}
  \begin{split}
    R_k=&-e^{-ia_k\Psi}P_k\partial_x(v^2/2)\\
    &-\phi_{\mathrm{low}}[\partial_xv_k-D^\alpha v_k\cdot (in_k|n_k|^{-\alpha})]\\
    &-[e^{-ia_k\Psi}D^\alpha\partial_x(e^{ia_k\Psi}v_k)-D^\al\partial_x(v_k)-(\alpha+1)D^\alpha v_k\cdot (ia_k\Psi')]\\
    &-e^{-ia_k\Psi}[P_k(\phi_{\mathrm{low}}\cdot \partial_xv)-\phi_{\mathrm{low}}\cdot \partial_x(P_kv)]\\
    &-[ia_k\phi^2_{\mathrm{low}}\cdot v_k+e^{-ia_k\Psi}P_k(v\cdot
    \partial_x\phi_{\mathrm{low}})].
  \end{split}
\end{equation}

To summarize, given a solution $u\in C( (-T,T):H^\infty_r)$ of the
initial-value problem \eqref{rh0} we constructed functions $v_k\in C(
(-T,T):H^\infty)$, $k\in \Z$, which solve the initial-value problems
\begin{equation}\label{rh60}
  \begin{cases}
    \partial_tv_k+D^\al\partial_x(v_k)=R_k\text{ on }\R_x\times(-T,T);\\
    v_k(0)=e^{-ia_k\cdot \Psi}\cdot P_k(\phi_{\mathrm{high}}),
  \end{cases}
\end{equation}
where, for simplicity of notation, $a_0=0$. The functions $R_k\in C(
(-T,T):H^\infty)$ are defined in \eqref{rh5} for $k=0$, and
\eqref{rh21} for $k\neq 0$. In addition, $u=v+\phi_{\mathrm{low}}$,
\begin{equation}\label{rh50}
  v=\sum_{k\in\Z}e^{ia_{k}\Psi}v_{k},\text{ and }v_k=e^{-ia_k\Psi}\widetilde{P}_k(e^{ia_{k}\Psi}v_{k})\text{ for any }k\in\Z.
\end{equation}

\section{Proof of the main theorem}\label{linear}

The rest of the argument is based on perturbative analysis of the
system of equations \eqref{rh60}, for fixed $\phi_{\mathrm{low}}$. In
this section we define our main normed spaces used for this
perturbative analysis and show how to reduce Propositions
\ref{Lemmaw2} and \ref{Lemmaw3} to the more technical Proposition
\ref{Lemmat2} below. Proposition \ref{Lemmat2} will be proved in the
remaining sections of the paper.

The normed spaces constructed in this section are very similar to
those used by two of the authors in \cite{IoKe} for the analysis of
the Benjamin--Ono equation. However, our spaces are adapted to the
frequency intervals $I_k$ constructed in section \ref{construct},
instead of dyadic intervals, since they are used to measure the
components $v_k$ and their renormalizations.

Let $\eta_0:\mathbb{R}\to[0,1]$ denote an even smooth function
supported in $[-8/5,8/5]$ and equal to $1$ in $[-5/4,5/4]$. For
$k\in\{1,2,\ldots\}$ let $\eta_k(\nu)=\eta_0(\nu/2^k)-\eta_0(\nu
/2^{k-1})$. For $k\in\Z$ let $\widetilde{\eta}_k(\nu )=\eta_0(\nu
/2^k)-\eta_0(\nu /2^{k-1})$. We define the sets $J_0=[-2,2]$,
$J_k=\{\nu\in\R:|\nu|\in[2^{k-1},2^{k+1}]$, $k=1,2,\ldots $, and
$\widetilde{J}_k=\{\nu\in\R:|\nu|\in[2^{k-1},2^{k+1}]$, $k\in\Z$. For
$\xi\in\mathbb{R}$ let
\begin{equation}\label{omega}
  \omega(\xi)=-\xi|\xi|^{\al}.
\end{equation}

Recall the sequence $n_k$, the functions $\chi _k$, and the intervals
$I_k$, $k\in\Z$, defined in \eqref{rh30}--\eqref{rh31.1}. We fix
$\delta=(\alpha-1)/100\in(0,1/100)$. We define the normed spaces
$Z_k=Z_k(\mathbb{R}\times\mathbb{R})$, $k\in\mathbb{Z}$: for $|k|\geq
1$ (high frequencies) we define
\begin{equation}\label{def1}
  \begin{split}
    Z_k=&\{f\in L^2:\, f \text{ supported in }I_k\times\mathbb{R}\text{ and }\\
    &\|f\|_{Z_k}=\sum_{j=0}^\infty
    2^{j/2}\beta_{k,j}\|\eta_j(\tau-\omega(\xi))f(\xi,\tau)\,\|_{L^2_{\xi,\tau}}<\infty\},
  \end{split}
\end{equation}
where
\begin{equation}\label{def1'}
  \beta_{k,j}=1+(2^j/|n_k|^{\al+1})^{1/2-\delta}.
\end{equation}
Notice that $2^{j/2}\beta_{k,j}\approx 2^{j(1-\delta)}$ when $k$ is
small. For $k=0$ (low frequencies) we define
\begin{equation}\label{def1''}
  \begin{split}
    X_0=\{&f\in L^2:\, f \text{ supported in }I_0\times\mathbb{R}\text{ and }\\
    &\|f\|_{X_0}=\sum_{j=0}^\infty\sum_{k'=-\infty}^22^{j(1-\delta)}2^{-k'}\|\eta_j(\tau)\widetilde{\eta}_{k'}(\xi)f(\xi,\tau)\,\|_{L^2_{\xi,\tau}}<\infty\}
  \end{split}
\end{equation}
and
\begin{equation}\label{def1c}
  \begin{split}
    Y_0=\{&f\in L^2:\, f \text{ supported in }I_0\times\mathbb{R}\text{ and }\\
    &\|f\|_{Y_0}=\sum_{j=0}^\infty2^{j(1-\delta)}\|\mathcal{F}^{-1}[\eta_j(\tau)f(\xi,\tau)]\|_{L^1_xL^2_t}<\infty\}.
  \end{split}
\end{equation}
Finally, we define
\begin{equation}\label{def1d}
  Z_0=X_0+Y_0.
\end{equation}
The normed spaces $Z_k$, $k\in\Z$, are our main spaces of functions
defined in the Fourier space. They are similar to the spaces used in
\cite{IoKe}, but slightly simpler (we do not need the spaces $Y_k$ for
$|k|\geq 1$, compare to \cite[Section 3]{IoKe}, since the local
smoothing phenomenon is not essential if $\alpha>1$).

For $\sigma\in[0,\infty)$ we define the normed spaces
$\widetilde{H}^{\sigma}=\widetilde{H}^{\sigma}(\mathbb{R})$,
$\mathbf{F}^{\sigma}=\mathbf{F}^{\sigma}(\mathbb{R}\times\mathbb{R})$,
and
$\mathbf{N}^{\sigma}=\mathbf{N}^{\sigma}(\mathbb{R}\times\mathbb{R})$. We
define first
\begin{equation}\label{def4}
  \begin{split}
    &\widetilde{H}^{\sigma}=\Big\{\phi\in H^\infty:\|\phi\|_{\widetilde{H}^{\sigma}}^2=\|\chi_0\mathcal{F}(\phi)\|^2_{B_0}+\sum_{|k|\geq 1} (1+|n_k|)^{2\sigma}\|\chi_k\mathcal{F}(\phi)\|_{L^2_\xi}^2<\infty\Big\},\\
    &\|f\|_{B_0}=\inf_{f=g+h}\|\mathcal{F}^{-1}(g)\|_{L^1}+\sum_{k'=-\infty}^22^{-k'}\|\widetilde{\eta}_{k'}
    h\,\|_{L^2_{\xi}}.
  \end{split}
\end{equation}
Then we define
\begin{equation}\label{def5}
  \begin{split}
    \mathbf{F}^{\sigma}=\Big\{&u\in
    C(\R:\widetilde{H}^{\sigma}):u\text{ supported in }
    \R_x\times [-4,4]\\
    &\text{ and }\|u\|_{\mathbf{F}^{\sigma}}^2=\sum_{k\in\Z} (1+|n_k| )^{2\sigma}\|P_k u\|^2_{F_k}<\infty\\
    &\text{ where } \|P_k
    u\|_{F_k}=\|\chi_k(\xi)\cdot\mathcal{F}(u)\|_{Z_k}\Big\},
  \end{split}
\end{equation}
and
\begin{equation}\label{def6}
  \begin{split}
    \mathbf{N}^{\sigma}=\Big\{& u\in C(\R:\widetilde{H}^{\sigma}):u\text{ supported in }\R_x\times [-4,4]\\
    &\text{ and }\|u\|_{\mathbf{N}^{\sigma}}^2=\sum_{k\in\Z} (1+|n_k| )^{2\sigma}\|P_k u\|^2_{N_k}<\infty\\
    &\text{ where }
    \|P_ku\|_{N_k}=\|\chi_k(\xi)(\tau-\omega(\xi)+i)^{-1}\cdot
    \mathcal{F}(u)\|_{Z_k}\Big\}.
  \end{split}
\end{equation}
Finally, for any $T'\in(0,1]$, $\sigma\in[0,2]$ and $f\in
C((-T',T'):\widetilde{H}^\sigma)$ we define\begin{equation*}
  \|f\|_{\F^\sigma(T')}=\inf_{\widetilde{f}=f\text{ in
    }\mathbb{R}\times(-T',T')}\|\widetilde{f}\|_{\F^\sigma},
\end{equation*}
and
\begin{equation*}
  \|f\|_{\N^\sigma(T')}=\inf_{\widetilde{f}=f\text{ in }\mathbb{R}\times(-T',T')}\|\widetilde{f}\|_{\N^\sigma}.
\end{equation*}

It follows easily from the definitions that
\begin{equation}\label{hh80}
  \sup_{t\in\R}\|u(t)\|_{\widetilde{H}^{\sigma}}\leq C \|u\|_{\mathbf{F}^\sigma}
\end{equation}
if $\sigma\in[0,2]$ and $u\in \mathbf{F}^\sigma$, see \cite[Lemma
4.2]{IoKe} for a similar proof. Thus $\F^\sigma\hookrightarrow
C(\R:\widetilde{H}^{\sigma})$.

For any $u\in C(\mathbb{R}:L^2)$ let $\widetilde{u}(.,t)\in
C(\mathbb{R}:L^2)$ denote its partial Fourier transform with respect
to the variable $x$. For $\phi\in H^\infty $ let $W(t)\phi\in
C(\mathbb{R}:H^\infty )$ denote the solution of the free evolution
given by
\begin{equation}\label{ni1}
  [W(t)\phi]\,\,\widetilde{}\,\,(\xi,t)=e^{it\omega(\xi)}\widehat{\phi}(\xi),
\end{equation}
where $\omega(\xi)$ is defined in \eqref{omega}. We record first an
$\widetilde{H}^{\sigma}\to \mathbf{F}^\sigma$ homogeneous linear
estimate.

\begin{proposition}\label{Lemmab1}
  If $\sigma\in[0,2]$ and $\phi\in \widetilde{H}^{\sigma}$ then
  \begin{equation}\label{ty3}
    \|\mathbf{1}_{(-1,1)}(t)\cdot (W(t)\phi)\|_{\mathbf{F}^{\sigma}(1)}\leq C\|\phi\|_{\widetilde{H}^{\sigma}}.
  \end{equation}
\end{proposition}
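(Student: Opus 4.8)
The plan is to estimate each dyadic piece $\|P_k(\mathbf{1}_{(-1,1)}(t)W(t)\phi)\|_{F_k}$ separately and then sum. Fix $k\in\Z$. Since $P_k$ acts in $x$ only, we have $\mathcal{F}[\mathbf{1}_{(-1,1)}(t)W(t)(P_k\phi)](\xi,\tau)=\chi_k(\xi)\widehat{\phi}(\xi)\cdot\widehat{\mathbf{1}_{(-1,1)}}(\tau-\omega(\xi))$, where I am choosing as the extension $\widetilde{f}$ the function $\mathbf{1}_{(-1,1)}(t)W(t)\phi$ itself (or, to be safe about the time cutoff, a fixed smooth bump $\psi(t)$ equal to $1$ on $(-1,1)$ and supported in $(-2,2)$, which is harmless since $W(t)\phi$ lives on all of $\R$). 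The key point is that $g(\tau):=\widehat{\psi}(\tau)$ is a fixed Schwartz function, so $|g(\tau)|\leq C_M(1+|\tau|)^{-M}$ for every $M$.

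Next I would compute the $Z_k$ norm of $F(\xi,\tau):=\chi_k(\xi)\widehat{\phi}(\xi)g(\tau-\omega(\xi))$. For $|k|\geq 1$, by definition $\|F\|_{Z_k}=\sum_{j\geq 0}2^{j/2}\beta_{k,j}\|\eta_j(\tau-\omega(\xi))F\|_{L^2_{\xi,\tau}}$. Because $\eta_j$ is supported where $|\tau-\omega(\xi)|\sim 2^j$ (and $|\tau|\leq 2$ for $j=0$), and $|g(\tau-\omega(\xi))|\leq C_M 2^{-jM}$ on that set, the $\tau$-integral of $|g(\tau-\omega(\xi))|^2$ over a band of width $\sim 2^j$ contributes a factor $C_M 2^{-2jM}2^{j}$, i.e. $\|\eta_j(\tau-\omega(\xi))F\|_{L^2_{\xi,\tau}}\leq C_M 2^{-jM}2^{j/2}\|\chi_k\widehat{\phi}\|_{L^2_\xi}$. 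Hence, recalling $\beta_{k,j}=1+(2^j/|n_k|^{\al+1})^{1/2-\delta}\leq C 2^{j}$ crudely, the series $\sum_j 2^{j/2}\beta_{k,j}2^{-jM}2^{j/2}$ converges (choose $M\geq 3$) to an absolute constant, so $\|P_k u\|_{F_k}\leq C\|\chi_k\widehat{\phi}\|_{L^2_\xi}$. The case $k=0$ is analogous but one decomposes $F$ also in the $\xi$-variable via $\widetilde{\eta}_{k'}$: here one writes $\chi_0(\xi)\widehat{\phi}(\xi)=g(\xi)+h(\xi)$ achieving the $B_0$-infimum, estimates the $Y_0$-part from the $L^1_x$ piece $g$ (using that $\psi$ is fixed in time so $\mathcal{F}^{-1}[\eta_j(\tau)\widehat\psi(\tau-\omega(\xi))\cdot]$ is controlled in $L^1_xL^2_t$ with rapid decay in $j$, after noting $\omega$ is bounded on $I_0$) and the $X_0$-part from the $L^2_\xi$ piece $h$ weighted by $\sum_{k'\leq 2}2^{-k'}\|\widetilde\eta_{k'}h\|_{L^2}$, with the $\tau$-sum again converging by Schwartz decay of $\widehat\psi$.

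Finally I would sum over $k$: by the computation above,
\begin{equation*}
\|\mathbf{1}_{(-1,1)}(t)W(t)\phi\|_{\F^\sigma(1)}^2\leq\sum_{k\in\Z}(1+|n_k|)^{2\sigma}\|P_k u\|_{F_k}^2\leq C\sum_{k\in\Z}(1+|n_k|)^{2\sigma}\|\chi_k\widehat{\phi}\|_{L^2_\xi}^2\leq C\|\phi\|_{\widetilde{H}^\sigma}^2,
\end{equation*}
where the last inequality is exactly the definition of $\widetilde{H}^\sigma$ (with the $k=0$ term handled through $\|\chi_0\widehat\phi\|_{B_0}$ in place of $\|\chi_0\widehat\phi\|_{L^2}$, consistently with how $Z_0$ was estimated). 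This establishes \eqref{ty3}.

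The one genuine subtlety — the step I expect to be the main obstacle — is the low-frequency block $k=0$: one must check that the fixed time cutoff $\psi$ interacts correctly with both summands of the $Z_0=X_0+Y_0$ decomposition, in particular that the $L^1_xL^2_t$ norm defining $Y_0$ is bounded using only that $\widehat\psi$ is Schwartz and $\omega$ is bounded on $I_0$ (so that $\eta_j(\tau)\widehat\psi(\tau-\omega(\xi))$ is, for each fixed $\xi\in I_0$, a Schwartz function of $\tau$ with $O(2^{-jM})$ size and support in $|\tau|\sim 2^j$), and that the $\sum_{k'\leq 2}2^{-k'}$ weight in $X_0$ is matched by the identical weight in the $B_0$-norm. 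Since the spaces here are built exactly as in \cite{IoKe}, the argument is the one referenced after \eqref{hh80}; I would simply cite \cite[Lemma 4.2 and its proof]{IoKe} for the bookkeeping and present the high-frequency estimate in the detail above, which is where the new weights $\beta_{k,j}$ enter and must be seen to cause no harm.
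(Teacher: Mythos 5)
Your argument is correct and is essentially the proof the paper intends (it only points to \cite[Lemma 5.1]{IoKe}): extend by a fixed smooth time bump, use the rapid decay of $\widehat{\psi}$ in the modulation variable to beat the weights $2^{j/2}\beta_{k,j}\leq C2^{j(1-\delta)}$ for $|k|\geq 1$, and handle $k=0$ by splitting $\chi_0\widehat{\phi}=g+h$ according to $B_0$ and matching $g$ to $Y_0$ and $h$ to $X_0$. One remark: the parenthetical option of taking the sharp cutoff $\mathbf{1}_{(-1,1)}(t)W(t)\phi$ itself as the extension would genuinely fail (its Fourier transform decays only like $2^{-j/2}$ per modulation band, which loses against the weight $2^{j(1-\delta)}$, and it is not in $C(\R:\widetilde{H}^\sigma)$), so the smooth bump is essential rather than merely "safe".
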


See, for example, \cite[Lemma 5.1]{IoKe} for a similar proof. We need
also an inhomogeneous $\mathbf{N}^\sigma\to \mathbf{F}^\sigma$ linear
estimate, see, for example, \cite[Lemma 5.2]{IoKe} for a similar
proof.

\begin{proposition}\label{Lemmab3}
  If $\sigma\in[0,2]$, $T\in(0,1]$, and $u\in \mathbf{N}^{\sigma}(T)$
  then
  \begin{equation*}
    \Big\|\int_0^tW(t-s)(u(s))\,ds\Big\|_{\mathbf{F}^{\sigma}(T)}\leq C\|u\|_{\mathbf{N}^{\sigma}(T)}.
  \end{equation*}
\end{proposition}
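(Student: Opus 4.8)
The plan is to reduce the inhomogeneous estimate to the homogeneous estimate of Proposition \ref{Lemmab1} by the usual Duhamel/extension argument, working frequency block by frequency block. Since both $\mathbf{N}^\sigma$ and $\mathbf{F}^\sigma$ are defined by summing the $N_k$- and $F_k$-norms of the frequency pieces $P_ku$ against the weights $(1+|n_k|)^{2\sigma}$, and since the operator $u\mapsto \int_0^tW(t-s)u(s)\,ds$ commutes with the Fourier multipliers $P_k$ (the symbol $\omega(\xi)$ is a function of $\xi$ alone), it suffices to prove the single-block estimate
\begin{equation*}
  \Big\|P_k\int_0^tW(t-s)(u(s))\,ds\Big\|_{F_k(T)}\leq C\|P_ku\|_{N_k(T)}
\end{equation*}
for each $k\in\Z$, with $C$ independent of $k$, and then square and sum. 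Here $\|g\|_{F_k(T)}$ and $\|g\|_{N_k(T)}$ denote the restriction norms obtained by infimizing $\|\chi_k\mathcal F(\widetilde g)\|_{Z_k}$, respectively $\|\chi_k(\tau-\omega(\xi)+i)^{-1}\mathcal F(\widetilde g)\|_{Z_k}$, over extensions $\widetilde g$ of $g$ from $\mathbb R\times(-T,T)$.

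The key steps are as follows. First, fix $k$ and an extension $\widetilde u\in\mathbf N^\sigma$ of $u$, and pick a smooth time cutoff $\psi$ supported in $(-2,2)$ with $\psi\equiv1$ on $[-1,1]$; since $T\le1$ we may replace $\widetilde u$ by $\psi(t)\widetilde u$ without changing its restriction to $\mathbb R\times(-T,T)$, and (by a standard algebra property of the spaces $Z_k$ under multiplication by $\psi$, as in \cite[Section 5]{IoKe}) without increasing its $\mathbf N^\sigma$-norm by more than a constant. Then write
\begin{equation*}
  \int_0^tW(t-s)(\widetilde u(s))\,ds = W(t)\,\mathcal F^{-1}_x\Big[\frac{1}{2\pi}\int_{\mathbb R}\frac{e^{it(\tau-\omega(\xi))}-1}{i(\tau-\omega(\xi))}\,\widehat{\widetilde u}(\xi,\tau)\,d\tau\Big],
\end{equation*}
and split the $\tau$-integral using the partition $\sum_j\eta_j(\tau-\omega(\xi))\equiv1$ into a low-modulation piece ($|\tau-\omega(\xi)|\lesssim 1$) and high-modulation pieces ($2^j\gtrsim1$). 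On the high-modulation pieces one uses the bound $|e^{it(\tau-\omega(\xi))}-1|/|\tau-\omega(\xi)|\lesssim |\tau-\omega(\xi)|^{-1}\approx 2^{-j}$, integrates in $\tau$, and applies Proposition \ref{Lemmab1} to the resulting free evolution; the extra factor $2^{-j}$ matches exactly the symbol $(\tau-\omega(\xi)+i)^{-1}$ in the definition of $N_k$, so this piece is controlled by $\|P_k\widetilde u\|_{N_k}$. On the low-modulation piece one instead expands $e^{it(\tau-\omega(\xi))}-1$ in its Taylor series in $t(\tau-\omega(\xi))$, producing a sum $\sum_{m\ge1} W(t)\mathcal F^{-1}_x[\cdots]\cdot t^m/m!$ of free evolutions whose data have $\widetilde H^\sigma$-norm controlled by the $L^2$-mass of $\eta_0(\tau-\omega(\xi))\widehat{\widetilde u}$, again bounded by $\|P_k\widetilde u\|_{N_k}$; the powers of $t$ are absorbed by the time cutoff and summed over $m$. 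Taking the infimum over extensions $\widetilde u$ gives the block estimate with a constant uniform in $k$, and summing over $k$ finishes the proof.

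The main obstacle is the low-modulation piece together with the uniformity of the constant in $k$. In the dyadic-interval setting of \cite{IoKe} the analogue is Lemma 5.2 there, but here the frequency intervals $I_k$ have width $\approx|n_k|^{1/2}$ rather than $\approx|n_k|$, and the $Z_k$-norm carries the $k$-dependent modulation weights $\beta_{k,j}=1+(2^j/|n_k|^{\al+1})^{1/2-\delta}$; one must check that the Taylor-expansion argument and the time-localization lemma for $Z_k$ produce constants that do not degenerate as $|k|\to\infty$. This amounts to verifying that multiplication by the fixed cutoff $\psi(t)$ is bounded on $Z_k$ uniformly in $k$ — which follows because $\psi$ acts only in the $\tau$ variable and the weights $2^{j/2}\beta_{k,j}$ are, for each fixed $k$, comparable across neighboring dyadic modulation scales — and that the sum over the Taylor index $m$ converges with a $k$-independent bound. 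Once these bookkeeping points are settled, the estimate is essentially formal. I would also remark that, exactly as in \cite[Lemma 5.2]{IoKe}, the low-frequency block $k=0$ requires using the $X_0+Y_0$ structure of $Z_0$ but presents no additional difficulty, since the operations above respect that decomposition.
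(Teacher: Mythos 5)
Your argument is essentially the proof the paper has in mind: the paper gives no proof of Proposition \ref{Lemmab3} beyond citing \cite[Lemma 5.2]{IoKe}, and your Duhamel reduction — blockwise estimate, time cutoff, modulation splitting with the $2^{-j}$ gain matching $(\tau-\omega(\xi)+i)^{-1}$, Taylor expansion at low modulation, and the homogeneous estimate of Proposition \ref{Lemmab1} for the free-evolution pieces — is exactly that standard argument, correctly adapted to the weights $\beta_{k,j}$ and the $X_0+Y_0$ structure of $Z_0$. This is correct and takes the same route as the cited proof.
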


In the rest of this section we use the notation in section
\ref{construct}. In particular, given a solution $u\in
C((-T,T):H^\infty_r)$, $T\in(0,1]$, of the initial-value problem
\eqref{rh0} with $\|\phi\|_{H^0_r}\leq\ep_0$, we constructed the
functions $v_k\in C( (-T,T):H^{\infty})$, $k\in \Z$, which solve the
equations
\begin{equation}\label{ro20}
  \begin{cases}
    \partial_tv_k+D^\al\partial_x(v_k)=R_k\text{ on }\R_x\times(-T,T);\\
    v_k(0)=e^{-ia_k\cdot \Psi}\cdot P_k(\phi_{\mathrm{high}}).
  \end{cases}
\end{equation}
Here $R_k\in C( (-T,T):H^{\infty})$, $k\in\Z$, are as in \eqref{rh5}
and \eqref{rh21}, and $a_k$ are defined in \eqref{rh18}. Assume also
that $u'\in C((-T,T):H^\infty_r)$ is a solution of the initial-value
problem
\begin{equation*}
  \begin{cases}
    \partial_t{u'}+D^\al\partial_x{u'}+\partial_x({u'}^2/2)=0\text{ on }\mathbb{R}_x\times(-T,T);\\
    u'(0)=\phi'.
  \end{cases}
\end{equation*}
Assume, in addition, that
\begin{equation}\label{ly1}
  \|\phi'\|_{H^0_r}\leq\ep_0\quad\text{ and }\quad\phi'_{\mathrm{low}}=\phi_{\mathrm{low}},
\end{equation}
where, as in section \ref{construct},
$\phi'_{\mathrm{low}}=\mathcal{F}^{-1}[\widehat{\phi'}(\xi)\cdot\mathbf{1}_{[-1/2,1/2]}(\xi)]$. Let
$v'_k$, $R'_k$, $n_k$, $a_k$, $\Psi$ be defined as in section
\ref{construct}, so
\begin{equation}\label{ro2.2}
  \begin{cases}
    \partial_tv'_k+D^\al\partial_x(v'_k)=R'_k\text{ on }\R_x\times(-T,T);\\
    v'_k(0)=e^{-ia_k\cdot \Psi}\cdot P_k(\phi'_{\mathrm{high}}).
  \end{cases}
\end{equation}
Our main proposition concerning the nonlinearities $R_k$ and $R'_k$ is
the following:

\begin{proposition}\label{Lemmat2}
  (a) For any $\sigma\in[0,2]$
  \begin{equation*}
    \sum_{k\in\Z}\|R_k\|_{\N^\sigma(T)}^2<\infty.
  \end{equation*}
  In addition, the mapping $\mathcal{N}:(0,T]\to[0,\infty)$,
  \begin{equation*}
    \mathcal{N}(T')=\sum_{k\in\Z}\|R_k\|_{\N^0(T')}^2
  \end{equation*}
  is continuous and increasing on the interval $(0,T]$ and
  \begin{equation*}
    \lim_{T'\to 0}\mathcal{N}(T')=0.
  \end{equation*}

  (b) Assume $\sigma\in[0,2]$ and $T'\in[0,T]$. Then
  \begin{equation}\label{toshow1}
    \sum_{k\in\mathbb{Z}}\|R_k\|_{\N^\sigma(T')}^2\leq
    C\big(\sum_{k\in\mathbb{Z}}\|v_k\|_{\F^\sigma(T')}^2\big)
    \big(\sum_{k\in\mathbb{Z}}\|v_k\|_{\F^0(T')}^2+\ep_0^2\big)+C\|\phi\|_{H^\sigma}^2.
  \end{equation}
  In addition,
  \begin{equation}\label{toshow2}
    \sum_{k\in\mathbb{Z}}\|R'_k-R_k\|_{\N^0(T')}^2\leq
    C\big(\sum_{k\in\mathbb{Z}}\|v'_k-v_k\|_{\F^0(T')}^2\big)
    \big(\sum_{k\in\mathbb{Z}}\big(\|v_k\|_{\F^0(T')}^2+\|v'_k\|_{\F^0(T')}^2\big)+\ep_0^2\big).
  \end{equation}
\end{proposition}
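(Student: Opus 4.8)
The plan is to reduce everything to frequency-localized estimates on the individual terms appearing in the formulas \eqref{rh5} and \eqref{rh21} for $R_k$, summed against the weights $(1+|n_k|)^{2\sigma}$. The structure of the proof will mirror the term-by-term decomposition already visible in \eqref{rh21}: for each $k$ the nonlinearity $R_k$ is a sum of five groups of terms, and I would estimate each group separately in $\N^\sigma(T')$, then square and sum in $k$. The main inputs are: (i) the frequency-localized bilinear estimates for products $v_j v_{j'}$ (to be proved in sections \ref{locL2} and \ref{Dyadic2}), which control $e^{-ia_k\Psi}P_k\partial_x(v^2/2)$ after one expresses $v=\sum_j e^{ia_j\Psi}v_j$ via \eqref{rh50}; (ii) the multiplication estimates of section \ref{mult}, which handle the operators ``multiplication by a smooth bounded factor'' such as $e^{\pm i a_k\Psi}$, $\phi_{\mathrm{low}}$, $\Psi'$, and $\phi_{\mathrm{low}}^2$ acting on the $Z_k$-spaces; and (iii) the commutator estimates of section \ref{commutatorest}, which are needed for the para-commutator terms $P_k(\phi_{\mathrm{low}}\partial_x v)-\phi_{\mathrm{low}}\partial_x(P_kv)$ and for the ``symbol error'' term $e^{-ia_k\Psi}D^\al\partial_x(e^{ia_k\Psi}v_k)-D^\al\partial_x v_k-(\alpha+1)D^\alpha v_k\cdot(ia_k\Psi')$, which is a second-order-in-$\Psi$ remainder and is the heart of the renormalization bookkeeping.

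Concretely, I would proceed as follows. First, establish part (a): the quantitative bounds $\sum_k\|R_k\|_{\N^\sigma(T)}^2<\infty$ follow from part (b)'s estimate \eqref{toshow1} once one knows the $v_k$ lie in $\F^\sigma(T)$, which in turn follows from Proposition \ref{Lemmab1}, Proposition \ref{Lemmab3} applied to the system \eqref{rh60}, and a continuity/bootstrap argument on $T'$; the limit $\lim_{T'\to0}\mathcal N(T')=0$ and the monotonicity come from the fact that the $\N^0(T')$-norm of a fixed function tends to $0$ as the time interval shrinks (there is a genuine time-localization gain because every term in $R_k$ is at least quadratic or carries a smooth factor with a derivative, and one can exploit the extra factor of a small power of $T'$ coming from Hölder in time against the $\F$-norms). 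For part (b), I would expand $v^2=\big(\sum_j e^{ia_j\Psi}v_j\big)^2$ and, for the term $e^{-ia_k\Psi}P_k\partial_x(v^2/2)$, combine the bilinear estimate with the multiplication estimate to absorb the phases $e^{i(a_j+a_{j'}-a_k)\Psi}$; here the key point is that $P_k$ restricts the output frequency to $I_k$, so only finitely many pairs $(j,j')$ with $n_j+n_{j'}\approx n_k$ contribute, and the bilinear gain beats the weight $(1+|n_k|)^{2\sigma}$ because $\sigma\le 2$ and $\alpha>1$. The terms linear in $v_k$ with a $\phi_{\mathrm{low}}$ factor are handled by multiplication plus commutator estimates, using crucially that $\phi_{\mathrm{low}}$ has frequency support in $[-1/2,1/2]$ and $\|\phi_{\mathrm{low}}\|$ is controlled by $\ep_0$, which is where the factor $(\sum_k\|v_k\|_{\F^0}^2+\ep_0^2)$ on the right-hand side of \eqref{toshow1} comes from. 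The inhomogeneous terms $D^\al\partial_x P_0\phi_{\mathrm{low}}$ and $P_0\partial_x(\phi_{\mathrm{low}}^2/2)$, which appear only in $R_0$, are bounded directly by $\|\phi\|_{H^\sigma}$, accounting for the last summand in \eqref{toshow1}.

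For the difference estimate \eqref{toshow2} I would write $R'_k-R_k$ by telescoping each term: since $\phi'_{\mathrm{low}}=\phi_{\mathrm{low}}$ by assumption \eqref{ly1}, the phases $e^{ia_k\Psi}$, the coefficients $a_k$, and the operators $P_k,\widetilde P_k$ are all identical for the primed and unprimed systems, so $R'_k-R_k$ depends only on the differences $v'_k-v_k$ and is again multilinear in the $v$'s with one factor replaced by a difference. The quadratic term contributes $\sum_j e^{i(a_j+a_{j'}-a_k)\Psi}\partial_x(v'_jv'_{j'}-v_jv_{j'})$, which one splits as $(v'_j-v_j)v'_{j'}+v_j(v'_{j'}-v_{j'})$ and estimates by the same bilinear lemma, producing exactly the structure $(\sum\|v'_k-v_k\|_{\F^0}^2)(\sum\|v_k\|_{\F^0}^2+\sum\|v'_k\|_{\F^0}^2+\ep_0^2)$; the linear-in-$v$ terms with $\phi_{\mathrm{low}}$ factors contribute the $\ep_0^2$ part, and there is no inhomogeneous contribution to the difference since those terms are identical.

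I expect the main obstacle to be the ``symbol error'' / renormalization remainder term in \eqref{rh21}, namely controlling
\[
e^{-ia_k\Psi}D^\alpha\partial_x(e^{ia_k\Psi}v_k)-D^\al\partial_x(v_k)-(\alpha+1)D^\alpha v_k\cdot(ia_k\Psi')
\]
in $\N^\sigma(T')$. This is a second-order Taylor remainder in the phase $ia_k\Psi$ for the non-local, non-polynomial symbol $\xi\mapsto\xi|\xi|^\alpha$, and the naive bound loses too much: one must exploit both that $v_k$ is frequency-localized to the narrow band $I_k$ of width $\sim\sqrt{|n_k|}$ (so that on $\mathrm{supp}(\widehat{v_k})$ the symbol $\xi|\xi|^\alpha$ is well-approximated by its second-order Taylor polynomial at $\xi=n_k$, with the chosen $a_k=-n_k|n_k|^{-\alpha}/(\alpha+1)$ exactly cancelling the first-order discrepancy), and that $\Psi'=\phi_{\mathrm{low}}$ is bounded by $\ep_0$ and very low-frequency. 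Making this quantitative — pairing the curvature of the symbol over a band of width $\sqrt{|n_k|}$ against the $\beta_{k,j}$ weights in \eqref{def1'} and the modulation weights $2^{j/2}$ — is precisely the ``technical and painful'' computation the introduction warns about, and is the step where the specific choices of $n_k$, $\chi_k$, $I_k$, $a_k$ and $\delta$ all have to fit together.
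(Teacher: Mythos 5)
Your overall architecture is the same as the paper's: decompose $R_k$ into the groups \eqref{eq:r1}--\eqref{eq:r5} plus $R_0$, estimate each in $\N^\sigma$ using the bilinear estimates of Sections \ref{locL2}--\ref{Dyadic2}, the multiplication estimates of Section \ref{mult} and the commutator estimates of Section \ref{commutatorest}, and you correctly single out the renormalization remainder as delicate (the paper treats it via Lemma \ref{lm:comm-ext} together with the smallness $|a_k|=|n_k|^{1-\al}$; note, though, that the band-width-$\sqrt{|n_k|}$ gain you invoke is really what rescues the residual drift term $R_k^{(2)}$, while $R_k^{(3)}$ is an expansion in the low frequency of $\Psi'$). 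However, two steps of your plan have genuine gaps. First, your argument for part (a) is circular: you propose to deduce $\sum_k\|R_k\|^2_{\N^\sigma(T)}<\infty$ from \eqref{toshow1} once the $v_k$ lie in $\F^\sigma(T)$, obtaining the latter from Propositions \ref{Lemmab1} and \ref{Lemmab3} and ``a continuity/bootstrap argument on $T'$''; but Proposition \ref{Lemmab3} controls $\|v_k\|_{\F^\sigma(T')}$ by $\|R_k\|_{\N^\sigma(T')}$ --- the very quantity part (a) asserts is finite --- and the bootstrap (as run in the proof of Proposition \ref{Lemmaw2}) needs exactly the continuity of $\mathcal{N}$ and $\lim_{T'\to 0}\mathcal{N}(T')=0$ to start. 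Part (a) has to be proved directly, as a qualitative statement about the fixed smooth functions $R_k\in C((-T,T):H^\infty)$ (the standard argument the paper cites from \cite{IoKeTa}); it is an input to the bootstrap, not an output of it.

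Second, for the quadratic term $R_k^{(1)}$ your reduction is too optimistic. The claim that ``only finitely many pairs $(j,j')$ with $n_j+n_{j'}\approx n_k$ contribute'' is false: for each output band $I_k$ there are infinitely many $\mathrm{High}\times\mathrm{High}\to\mathrm{Low}$/comparable pairs with nearly opposite frequencies, and the real difficulty is the weighted $\ell^2$-in-$k$ summation over all triples $(k_1,k_2,k)$ against the weights $(1+|n_k|)^{2\sigma}$ --- precisely the content of Lemma \ref{Lemmat1} and its counting arguments, which your plan does not address. Relatedly, you cannot simply ``absorb the phases $e^{i(a_j+a_{j'}-a_k)\Psi}$ by the multiplication estimate'': multiplication by an $S^\infty$ factor is bounded on $Z_{k_1}$ only for high-modulation inputs (Proposition \ref{Lemmab4}(a) requires $f\in Z_{k_1}^{\mathrm{high}}$), so the paper's Lemma \ref{Lemmat1} must split each factor into high and low modulation, use that the product of two low-modulation factors automatically has high modulation (via \eqref{om20}), and otherwise move the derivative onto the phase to produce a restricted admissible $S^2$ factor. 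Without this modulation bookkeeping the estimate for $\partial_x P_k(e^{ia\Psi}w\,w')$ does not close, so this central step of your plan requires the additional ideas of Section \ref{mainlemma} rather than a direct combination of the bilinear and multiplication lemmas.
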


The proof of Proposition \ref{Lemmat2} will cover sections
\ref{locL2}--\ref{lastsection}. In the rest of this section we show
how to use this proposition to complete the proof of Theorem
\ref{Main1}.

\begin{proof}[Proof of Proposition \ref{Lemmaw2}] It follows easily
  from the definitions that for $\sigma\in[0,2]$
  \begin{equation}\label{ro60}
    \sum_{k\in\Z}\|e^{-ia_k\cdot\Psi}\cdot P_k(\phi_{\mathrm{high}})\|^2_{\widetilde{H}^\sigma}\leq C\|\phi_{\mathrm{high}}\|^2_{H^\sigma}.
  \end{equation}
  See also \cite[Lemma10.1]{IoKe} for a similar proof. Using
  Propositions \ref{Lemmab1} and \eqref{Lemmab3} and the equations
  \eqref{ro20} it follows that for any $\sigma\in[0,2]$ and
  $T'\in(0,T]$
  \begin{equation}\label{ro61}
    \sum_{k\in\Z}\|v_k\|^2_{\F^\sigma(T')}\leq C\|\phi_{\mathrm{high}}\|^2_{H^\sigma}+C\sum_{k\in\Z}\|R_k\|^2_{\N^\sigma(T')}.
  \end{equation}
  We set $\sigma=0$ and combine \eqref{ro61} and \eqref{toshow1}; it
  follows that
  \begin{equation*}
    \mathcal{N}(T')\leq C(\ep_0^2+\mathcal{N}(T'))^2+C\ep_0^2.
  \end{equation*}
  Using Proposition \ref{Lemmat2} (a) it follows that
  $\mathcal{N}(T')\leq C\ep_0^2$ for any $T'\in(0,T]$, provided that
  the constant $\ep_0$ is taken sufficiently small. In particular
  \begin{equation*}
    \sum_{k\in\Z}\|R_k\|_{\N^0(T)}^2\leq C\ep_0^2.
  \end{equation*}
  It follows from \eqref{ro61} with $\sigma=0$ that
  \begin{equation}\label{ro62}
    \sum_{k\in\Z}\|v_k\|_{\F^0(T)}^2\leq C\ep_0^2.
  \end{equation}
  Thus, using \eqref{toshow1} with $\sigma=2$
  \begin{equation*}
    \sum_{k\in\Z}\|R_k\|_{\N^2(T)}^2\leq C\ep_0^2\big(\sum_{k\in\Z}\|v_k\|_{\F^2(T)}^2\big)+C\|\phi\|_{H^2}^2.
  \end{equation*}
  Using \eqref{ro61} with $\sigma=2$ it follows that
  \begin{equation*}
    \sum_{k\in\Z}\|v_k\|^2_{\F^2(T)}\leq C\|\phi\|^2_{H^2}+C\ep_0^2\big(\sum_{k\in\Z}\|v_k\|_{\F^2(T)}^2\big),
  \end{equation*}
  therefore, assuming $\ep_0$ sufficiently small,
  \begin{equation*}
    \sum_{k\in\Z}\|v_k\|^2_{\F^2(T)}\leq C\|\phi\|^2_{H^2}.
  \end{equation*}
  Using \eqref{hh80} it follows that for any $t\in(-T,T)$
  \begin{equation}\label{ro65}
    \sum_{k\in\Z}\|v_k(t)\|^2_{\widetilde{H}^2}\leq C\|\phi\|^2_{H^2}.
  \end{equation}
  We use now \eqref{rh50}
  \begin{equation*}
    u=\phi_{\mathrm{low}}+\sum_{k\in\Z}e^{ia_{k}\Psi}v_{k},\text{ and }v_k=e^{-ia_k\Psi}\widetilde{P}_k(e^{ia_{k}\Psi}v_{k})\text{ for any }k\in\Z.
  \end{equation*}
  to prove a bound on $u$. For any $t\in(-T,T)$ we have, using
  \eqref{ro65},
  \begin{align*}
    &\|u(t)\|_{H^2}^2\leq C\|\phi_{\mathrm{low}}\|_{H^2}^2+C\sum_{k\in\Z}\|\widetilde{P}_k(e^{ia_k\Psi}v_k(t))\|_{H^2}^2\\
    \leq &
    C\|\phi_{\mathrm{low}}\|_{H^2}^2+\sum_{k\in\Z}\|v_k(t)\|^2_{H^2}\leq
    C\|\phi\|^2_{H^2}.
  \end{align*}
  This completes the proof of Proposition \ref{Lemmaw2}.
\end{proof}

\begin{proof}[Proof of Proposition \ref{Lemmaw3}] Let $\phi'=\phi^N$,
  so \eqref{ly1} is verified, and subtract equations \eqref{ro2.2} and
  \eqref{ro20}. The result is
  \begin{equation*}
    \begin{cases}
      \partial_t(v'_k-v_k)+D^\al\partial_x(v'_k-v_k)=R'_k-R_k\text{ on }\R_x\times(-T,T);\\
      (v'_k-v_k)(0)=e^{-ia_k\cdot \Psi}\cdot
      P_k((\phi'-\phi)_{\mathrm{high}}).
    \end{cases}
  \end{equation*}
  Using \eqref{ro60} we have
  \begin{equation*}
    \sum_{k\in\Z}\|e^{-ia_k\cdot\Psi}\cdot P_k((\phi'-\phi)_{\mathrm{high}})\|^2_{\widetilde{H}^0}\leq C\|(\phi'-\phi)_{\mathrm{high}}\|^2_{H^0}.
  \end{equation*}
  Using Proposition \ref{Lemmab1} and \ref{Lemmab3} it follows that
  \begin{equation*}
    \sum_{k\in\Z}\|v'_k-v_k\|^2_{\F^0(T)}\leq C\|(\phi'-\phi)_{\mathrm{high}}\|^2_{H^0}+\sum_{k\in\Z}\|R'_k-R_k\|^2_{\N^0(T)}.
  \end{equation*}
  Using \eqref{toshow2} with $T'=T$ and \eqref{ro62},
  \begin{equation*}
    \sum_{k\in\Z}\|R'_k-R_k\|^2_{\N^0(T)}\leq C\ep_0^2\sum_{k\in\Z}\|v'_k-v_k\|^2_{\F^0(T)}.
  \end{equation*}
  It follows from the last two inequalities that
  \begin{equation*}
    \sum_{k\in\Z}\|v'_k-v_k\|^2_{\F^0(T)}\leq C\|\phi'-\phi\|^2_{L^2},
  \end{equation*}
  provided that $\ep_0$ is sufficiently small. Using \eqref{hh80} it
  follows that
  \begin{equation}\label{ro70}
    \sum_{k\in\Z}\|v'_k(t)-v_k(t)\|^2_{L^2}\leq C\|\phi'-\phi\|^2_{L^2}
  \end{equation}
  for any $t\in(-T,T)$. As in the proof of Proposition \ref{Lemmaw2},
  it follows from \eqref{ro70} and \eqref{rh50} that
  $\|u'(t)-u(t)\|_{L^2}^2\leq C\|\phi'-\phi\|^2_{L^2}$, which
  completes the proof of Proposition \ref{Lemmaw3}.
\end{proof}

In view of the results in section \ref{estimates}, the main theorem
follows from Propositions \ref{Lemmaw2} and \ref{Lemmaw3}. Thus it
remains to prove Proposition \ref{Lemmat2}, which is the goal of the
rest of the paper.

\section{Localized $L^2$ estimates}\label{locL2}

In this section we prove the localized $L^2$ estimates in Corollary
\ref{Lemmad2}. These estimates are similar to the estimates proved in
\cite[Section 6]{IoKe} in the study of the Benjamin-Ono equation. More
general $L^2$ estimates of this type can be found in \cite{Tao4}.

For $\xi_1,\xi_2\in\mathbb{R}$ let
\begin{equation}\label{om1}
  \Omega(\xi_1,\xi_2)=-\omega(\xi_1+\xi_2)+\omega(\xi_1)+\omega(\xi_2),
\end{equation}
where, as before, $\omega(\xi)=-\xi|\xi|^\alpha$. For compactly
supported functions $f,g,h\in L^2(\mathbb{R}\times\mathbb{R})$ let
\begin{equation}\label{om2}
  J(f,g,h)=\int_{\mathbb{R}^4}f(\xi_1,\mu_1)g(\xi_2,\mu_2)h(\xi_1+\xi_2,\mu_1+\mu_2+\Omega(\xi_1,\xi_2))\,d\xi_1d\xi_2d\mu_1d\mu_2.
\end{equation}
Given a triplet of real numbers $(\alpha_1, \alpha_2,\alpha_3)$ let
$\mathrm{min}\,(\alpha_1, \alpha_2,\alpha_3)$,
$\mathrm{max}\,(\alpha_1, \alpha_2,\alpha_3)$, and
$\mathrm{med}\,(\alpha_1, \alpha_2,\alpha_3)$ denote the minimum, the
maximum, and the median (more precisely, $\mathrm{med}\,(\alpha_1,
\alpha_2,\alpha_3)=\alpha_1+\alpha_2+\alpha_3-\mathrm{max}\,(\alpha_1,
\alpha_2,\alpha_3)-\mathrm{min}\,(\alpha_1, \alpha_2,\alpha_3)$) of
the numbers $\alpha_1$, $\alpha_2$, and $\alpha_3$. We define the sets
$U_k$, $k\in\Z$,
\begin{equation}\label{om0.1}
  \begin{cases}
    &U_k=\{\nu\in\R:|\nu|\in[(5n_{k-1}+n_k)/6,(5n_{k+1}+n_k)/6]\}\text{ if }k\in[1,\infty)\cap\Z;\\
    &U_k=\{\nu\in\R:|\nu|\in[2^{k+1},2^{k+3}]\}\text{ if
    }k\in(-\infty,0]\cap\Z.
  \end{cases}
\end{equation}
Clearly $U_k=I_k\cup I_{-k}$ if $k\geq 1$ and
$U_k=\widetilde{J}_{k+2}$ if $k\leq 0$. For $k_1,k_2,k_3\in\Z$ let
\begin{equation}\label{hj100}
  d_\al(k_1,k_2;k_3)=\inf\{\big\|\xi_1|^\al-|\xi_2|^\al\big|:\xi_1\in U_{k_1}, \xi_2\in U_{k_2}, \xi_1+\xi_2\in U_{k_3}\}.
\end{equation}

\begin{lemma}\label{Lemmad1}
  Assume $k_1,k_2,k_3\in\mathbb{Z}$, $j_1,j_2,j_3\in\mathbb{Z}_+$, and
  $f_{k_i}^{j_i}\in L^2(\mathbb{R}\times\mathbb{R})$ are functions
  supported in $U_{k_i}\times J_{j_i}$, $i=1,2,3$.

  (a) For any $k_1,k_2,k_3\in\mathbb{Z}$ and
  $j_1,j_2,j_3\in\mathbb{Z}_+$,
  \begin{equation}\label{om31}
    |J(f_{k_1}^{j_1},f_{k_2}^{j_2},f_{k_3}^{j_3})|\leq C\mathrm{min}\,( |U_{k_1}|,|U_{k_2}|,|U_{k_3}| )^{1/2}2^{\mathrm{min}\,(j_1,j_2,j_3)/2}
    \prod_{i=1}^3\|f_{k_i}^{j_i}\|_{L^2}.
  \end{equation}

  (b) Assume that $\{i_1,i_2,i_3\}$ is a permutation of $\{1,2,3\}$.
  Then
  \begin{equation}\label{om32}
    |J(f_{k_1}^{j_1},f_{k_2}^{j_2},f_{k_3}^{j_3})|\leq
    C2^{(j_1+j_2+j_3)/2}[2^{j_{i_3}}d_\al(k_{i_1},k_{i_2};k_{i_3})]^{-1/2}\prod_{i=1}^3\|f_{k_i}^{j_i}\|_{L^2}.
  \end{equation}

  (c) For any $k_1,k_2,k_3\in\mathbb{Z}$ and
  $j_1,j_2,j_3\in\mathbb{Z}_+$,
  \begin{equation}\label{om3}
    |J(f_{k_1}^{j_1},f_{k_2}^{j_2},f_{k_3}^{j_3})|\leq
    C2^{\mathrm{min}\,(j_1,j_2,j_3)/2+\mathrm{med}\,(j_1,j_2,j_3)/4}\prod_{i=1}^3\|f_{k_i}^{j_i}\|_{L^2}.
  \end{equation}
\end{lemma}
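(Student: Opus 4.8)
The plan is to treat the trilinear form $J(f,g,h)$ as an integral over the hyperplane $\xi_1+\xi_2=\xi_3$, $\mu_1+\mu_2=\mu_3-\Omega(\xi_1,\xi_2)$, and to bound it by Cauchy--Schwarz after isolating which pair of the three functions to pair together. For part (a), I would fix the two indices $i$ with the smallest modulation levels and the smallest frequency support. After using Cauchy--Schwarz in the two ``free'' variables (say $\xi_1,\mu_1$ once $\xi_2,\mu_2$ are integrated against $g$), the key point is that the remaining integral of $|g(\xi_2,\mu_2)|^2$ over the admissible set is controlled by $\|g\|_{L^2}^2$, while the measure of the set of $(\xi_1,\mu_1)$ contributing for fixed output is at most $\min(|U_{k_1}|,|U_{k_2}|,|U_{k_3}|)\cdot 2^{\min(j_1,j_2,j_3)}$, since $\xi_1$ ranges over an interval of the stated length and $\mu_1$ over an interval of length $\lesssim 2^{\min(j_i)}$ (the $\mu$-support of the relevant $f^{j_i}_{k_i}$). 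Taking square roots gives \eqref{om31}. This is the easy, dispersion-free estimate and it follows the scheme of \cite[Section 6]{IoKe}.

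For part (b), the gain comes from the resonance function. I would change variables in the $\mu$-integration so that $\Omega(\xi_1,\xi_2)=-\omega(\xi_1+\xi_2)+\omega(\xi_1)+\omega(\xi_2)$ enters through the constraint $\mu_3=\mu_1+\mu_2+\Omega(\xi_1,\xi_2)$, and then integrate out the $\mu$ variables first (costing $2^{(j_1+j_2+j_3)/2}$ after Cauchy--Schwarz in $\mu$). What remains is a two-dimensional integral in $(\xi_1,\xi_2)$ over the region where all three frequency localizations hold, and here I would invoke the standard fact that the Jacobian of $\xi\mapsto \partial_{\xi_1}\Omega$ or $\partial_{\xi_2}\Omega$ is comparable to $\big||\xi_1|^\alpha-|\xi_2|^\alpha\big|$ (up to the constant $\alpha(\alpha+1)$, using $\omega'(\xi)=-(\alpha+1)|\xi|^\alpha$): indeed $\partial_{\xi_1}\Omega - \partial_{\xi_2}\Omega = \omega'(\xi_1)-\omega'(\xi_2) = -(\alpha+1)(|\xi_1|^\alpha-|\xi_2|^\alpha)$. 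Foliating by the level sets of $\omega(\xi_1)+\omega(\xi_2)$ (or equivalently by $\tau-\omega(\xi_1+\xi_2)$ restricted to $J_{j_{i_3}}$), the co-area formula converts the $2^{j_{i_3}}$-thickness of the relevant modulation slab into a factor $2^{j_{i_3}}/d_\al(k_{i_1},k_{i_2};k_{i_3})$, and the square root of its reciprocal is exactly the gain in \eqref{om32}. The role of the permutation $\{i_1,i_2,i_3\}$ is just to select which variable plays the role of the ``output'' in the foliation; by the symmetry of $J$ up to relabeling (the form is symmetric under permuting the three slots with the sign of $\Omega$ adjusting), all three choices are legitimate.

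Finally, part (c) is an interpolation/combination of (a) and (b). The idea is: if the smallest modulation $2^{\min(j_1,j_2,j_3)}$ is already large compared to $d_\al$, one uses (b) with the appropriate permutation to get the gain; if it is small, one uses (a). More precisely, I would note that the largest modulation is always $\gtrsim$ the resonance $|\Omega|$, which in turn controls the relevant $d_\al$ from below when the frequencies are separated, and then balance the two bounds \eqref{om31} and \eqref{om32}. Choosing in (b) the permutation with $j_{i_3}=\mathrm{med}(j_1,j_2,j_3)$ and combining with (a) via the geometric-mean of the two estimates yields the exponent $\min/2 + \mathrm{med}/4$; the point is that $|U_k|$ and $d_\al$ satisfy $|U_k|\cdot d_\al \gtrsim 1$-type relations in the regime where this is used, which washes out the frequency-dependent factors. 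The main obstacle I anticipate is part (b): one must verify that the non-degeneracy $|\partial\Omega|\sim \big||\xi_1|^\alpha-|\xi_2|^\alpha\big|$ genuinely holds uniformly on the relevant frequency regions and that the co-area argument does not lose logarithmically; when two of the frequencies are comparable in size but opposite in sign (so $|\xi_1|^\alpha-|\xi_2|^\alpha$ can be small even though $\xi_1,\xi_2$ are large) one must fall back on part (a) or on the other permutation, and keeping track of exactly which bound survives in each frequency configuration is the delicate bookkeeping. Parts (a) and (c) are then routine once (b) is in hand.
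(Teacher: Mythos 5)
Your part (a) is fine and is the paper's argument. For part (b), the mechanism you identify is indeed the paper's key point: on the support one has $|\omega'(\xi_1)-\omega'(\xi_2)|=(\alpha+1)\big||\xi_1|^\al-|\xi_2|^\al\big|\geq C^{-1}d_\al(k_1,k_2;k_3)$, and a change of variables/co-area in $(\xi_1,\xi_2)$ produces the factor $d_\al^{-1/2}$. But your modulation bookkeeping does not close as stated: you cannot ``pay $2^{(j_1+j_2+j_3)/2}$ in the $\mu$-integration'' and then still retain a slab of thickness $2^{j_{i_3}}$ for the co-area step — once all three functions have been reduced to their $L^2_\mu$ traces, the only surviving constraint on $\Omega$ is membership in a set of size $\sim 2^{\max(j_1,j_2,j_3)}$, and, taken literally, your accounting (cost $2^{(j_1+j_2+j_3)/2}$ times the square root of the reciprocal of $2^{j_{i_3}}/d_\al$) outputs $2^{(j_1+j_2)/2}d_\al^{+1/2}$ instead of the needed $2^{(j_1+j_2)/2}d_\al^{-1/2}$. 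The correct implementation either keeps the third function's joint $(\xi,\mu)$-dependence (the paper's averaged function $B_{k_3}$: Cauchy--Schwarz only in $\mu_1,\mu_2$ at cost $2^{(j_1+j_2)/2}$, then the change of variables $(\xi_1,\xi_2)\mapsto(\xi_1+\xi_2,\omega(\xi_1)+\omega(\xi_2))$ with Jacobian $\geq C^{-1}d_\al$), or passes to the dual convolution form and bounds the measure of the fiber set by $2^{\min(j_1,j_2)}\cdot 2^{\max(j_1,j_2)}/d_\al$. Note also that your worry about comparable frequencies of opposite sign is moot for (b): $d_\al$ is by definition an infimum over the support, so small $d_\al$ only weakens the conclusion, it does not invalidate the proof.

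The genuine gap is part (c). It cannot be obtained by interpolating \eqref{om31} and \eqref{om32} as black boxes: $d_\al(k_1,k_2;k_3)$ can be zero (each $U_k$ with $k\geq 1$ is symmetric about the origin, so e.g.\ $k_1=k_2$ allows $|\xi_1|=|\xi_2|$), and even when it is positive there is no relation of the type $|U_k|\cdot d_\al\gtrsim 1$ linking it to the modulation scale $2^{\mathrm{med}(j_1,j_2,j_3)/2}$ that the exponent in \eqref{om3} demands; a geometric mean of \eqref{om31} and \eqref{om32} with $j_{i_3}=\mathrm{med}$ would require something like $d_\al\gtrsim 2^{\max-\mathrm{med}/2}$, which has no reason to hold. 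The paper's proof of (c) is not a combination of (a) and (b): it first uses (a) to reduce to $2^{\mathrm{med}/2}\leq C^{-1}\min(|U_{k_1}|,|U_{k_2}|,|U_{k_3}|)$, then splits the $(\xi_1,\xi_2)$-integration at the modulation-determined scale $|\xi_1-\xi_2|\sim 2^{\mathrm{med}/2}$. Near the diagonal a direct Cauchy--Schwarz over the strip of width $2^{\mathrm{med}/2}$ produces the factor $2^{\min/2}2^{\mathrm{med}/4}$; off the diagonal one reruns the (b)-type change-of-variables argument with the pointwise lower bound $|\omega'(\xi_1)-\omega'(\xi_2)|\geq C^{-1}2^{\mathrm{med}/2}$ in place of $d_\al$ (and it is precisely the reduction on $\min|U|$ that disposes of your opposite-sign scenario, since then $||\xi_1|-|\xi_2||=|\xi_1+\xi_2|\gtrsim |U_{k_3}|\gtrsim 2^{\mathrm{med}/2}$). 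Your plan contains neither the diagonal splitting nor the near-diagonal estimate that generates the $2^{\mathrm{med}/4}$, so part (c) remains unproven as proposed.
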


\begin{proof}[Proof of Lemma \ref{Lemmad1}] Let
  $A_{k_i}(\xi)=\Big[\int_{\mathbb{R}}|f_{k_i}^{j_i}(\xi,\mu)|^2\,d\mu\Big]^{1/2}$,
  $i=1,2,3$. Using the H\"{o}lder inequality and the support
  properties of the functions $f_{k_i}^{j_i}$,
  \begin{equation}\label{ela3}
    \begin{split}
      |J(f_{k_1}^{j_1},f_{k_2}^{j_2},f_{k_3}^{j_3})|&\leq
      C2^{\mathrm{min}\,(j_1,j_2,j_3)/2}\int_{\mathbb{R}^2}A_{k_1}(\xi_1)A_{k_2}(\xi_2)
      A_{k_3}(\xi_1+\xi_2)\,d\xi_1d\xi_2\\
      &\leq C\mathrm{min}\,( |U_{k_1}|,|U_{k_2}|,|U_{k_3}|
      )^{1/2}2^{\mathrm{min}\,(j_1,j_2,j_3)/2}
      \prod_{i=1}^3\|f_{k_i}^{j_i}\|_{L^2},
    \end{split}
  \end{equation}
  which is part (a).

  For part (b), using simple changes of variables and the fact that
  $\omega$ is odd,
  \begin{equation}\label{ela1}
    |J(f,g,h)|=|J(g,f,h)|\text{ and }|J(f,g,h)|=|J(\widetilde{f},h,g)|,
  \end{equation}
  where $\widetilde{f}(\xi,\mu)=f(-\xi,-\mu)$. Thus, by symmetry, in
  proving \eqref{om32} we may assume that $i_1=1$, $i_2=2$, and
  $i_3=3$. Let
  \begin{equation*}
    B_{k_3}(\xi,\mu)=\Big[\frac{1}{2^{j_1}2^{j_2}}\int_{\mathbb{R}^2}
    |f_{k_3}^{j_3}(\xi,\mu+\alpha+\beta)|^2(1+\alpha/2^{j_1})^{-2}(1+\beta/2^{j_2})^{-2}\,d\alpha
    d\beta\Big]^{1/2}.
  \end{equation*}
  Clearly,
  \begin{equation}\label{om6}
    \|B_{k_3}\|_{L^2}=C\|f_{k_3}^{j_3}\|_{L^2}\text{ and }B_{k_3}\text{ is supported in }U_{k_3}\times\mathbb{R}.
  \end{equation}
  Also, using H\"{o}lder's inequality,
  \begin{equation}\label{om7}
    \begin{split}
      &|J(f_{k_1}^{j_1},f_{k_2}^{j_2},f_{k_3}^{j_3})|\\
      &\leq
      C2^{(j_1+j_2)/2}\int_{\mathbb{R}^2}A_{k_1}(\xi_1)A_{k_2}(\xi_2)B_{k_3}(\xi_1+\xi_2,\Omega(\xi_1,\xi_2))\,d\xi_1d\xi_2\\
      &\leq
      C2^{(j_1+j_2)/2}\|A_{k_1}\|_{L^2}\|A_{k_2}\|_{L^2}\Big[\int_{U_{k_1}\times
        U_{k_2}}|B_{k_3}(\xi_1+\xi_2,\Omega(\xi_1,\xi_2))|^2\,d\xi_1d\xi_2\Big]^{1/2}.
    \end{split}
  \end{equation}
  Thus, for \eqref{om32}, it suffices to prove that
  \begin{equation}\label{om200}
    \Big[\int_{U_{k_1}\times U_{k_2}}|B_{k_3}(\xi_1+\xi_2,\Omega(\xi_1,\xi_2))|^2\,d\xi_1d\xi_2\Big]^{1/2}\leq C[d_\al(k_1,k_2;k_3)]^{-1/2}\|B_{k_3}\|_{L^2}.
  \end{equation}
  Let $B'_{k_3}(\xi,\mu)=B_{k_3}(\xi,-\omega(\xi)+\mu)$,
  $\|B'_{k_3}\|_{L^2}=\|B_{k_3}\|_{L^2}$, $B'_{k_3}$ supported in
  $U_{k_3}\times\mathbb{R}$. For \eqref{om200} it suffices to prove
  that
  \begin{equation}\label{om201}
    \Big[\int_{U_{k_1}\times U_{k_2}}|B'_{k_3}(\xi_1+\xi_2,\omega(\xi_1)+\omega(\xi_2))|^2\,d\xi_1d\xi_2\Big]^{1/2}\leq C[d_\al(k_1,k_2;k_3)]^{-1/2}\|B'_{k_3}\|_{L^2}.
  \end{equation}
  We observe now that $|\omega'(\xi_1)-\omega'(\xi_2)|\geq
  C^{-1}d_\al(k_1,k_2;k_3)$ if $\xi_1\in U_{k_1}$, $\xi_2\in U_{k_2}$,
  and $\xi_1+\xi_2\in U_{k_3}$. The bound \eqref{om201} follows.

  For part (c), using part (a), we may assume
  \begin{equation}\label{ela2}
    2^{\mathrm{med}\,(j_1,j_2,j_3)/2}\leq C^{-1}\mathrm{min}\,( |U_{k_1}|, |U_{k_2}|, |U_{k_3}| ).
  \end{equation}
  Using \eqref{ela1}, we may also assume
  $j_1=\mathrm{min}\,(j_1,j_2,j_3)$ and
  $j_2=\mathrm{med}\,(j_1,j_2,j_3)$. Let
  \begin{equation*}
    R_{j_2}=\{(\xi_1,\xi_2):|\xi_1-\xi_2|\geq 2^{j_2/2}\}.
  \end{equation*}
  For the integral over
  $(\xi_1,\xi_2)\in{}^c\negmedspace\,R_{j_2}=\mathbb{R}^2\setminus
  R_{j_2}$ we use a bound similar to \eqref{ela3}:
  \begin{equation*}
    \begin{split}
      \Big|\int_{{}^c\negmedspace R_{j_2}\times\mathbb{R}^2}&f_{k_1}^{j_1}(\xi_1,\mu_1)f_{k_2}^{j_2}(\xi_2,\mu_2)f_{k_3}^{j_3}(\xi_1+\xi_2,\mu_1+\mu_2+\Omega(\xi_1,\xi_2))\,d\xi_1d\xi_2d\mu_1d\mu_2\Big|\\
      &\leq C2^{j_1/2}\int_{{}^c\negmedspace R_{j_2}}A_{k_1}(\xi_1)A_{k_2}(\xi_2)A_{k_3}(\xi_1+\xi_2)\,d\xi_1d\xi_2 \\
      &\leq C2^{j_1/2}\iint_{|\mu|\leq 2^{j_2/2}}A_{k_1}(\xi_2+\mu)A_{k_2}(\xi_2)A_{k_3}(2\xi_2+\mu)\,d\xi_2d\mu\\
      &\leq C2^{j_1/2}\int_{|\mu|\leq 2^{j_2/2}}\Big(\int_\mathbb{R}|A_{k_1}(\xi_2+\mu)|^2|A_{k_2}(\xi_2)|^2\,d\xi_2\Big)^{1/2}\|A_{k_3}\|_{L^2}d\mu\\
      &\leq
      C2^{j_1/2}2^{j_2/4}\|A_{k_1}\|_{L^2}\|A_{k_2}\|_{L^2}\|A_{k_3}\|_{L^2},
    \end{split}
  \end{equation*}
  which suffices for \eqref{om3}. For the integral over
  $(\xi_1,\xi_2)\in R_{j_2}$ we use a bound similar to \eqref{om7}
  \begin{equation}\label{ela4}
    \begin{split} &\Big|\int\limits_{R_{j_2}\times\mathbb{R}^2}f_{k_1}^{j_1}(\xi_1,\mu_1)f_{k_2}^{j_2}(\xi_2,\mu_2)f_{k_3}^{j_3}(\xi_1+\xi_2,\mu_1+\mu_2+\Omega(\xi_1,\xi_2))\,d\xi_1d\xi_2d\mu_1d\mu_2\Big|\\
      \leq&
      C2^{(j_1+j_2)/2}\|A_{k_1}\|_{L^2}\|A_{k_2}\|_{L^2}
      \Big[\int\limits_{R_{j_2}\cap
        (U_{k_1}\times
        U_{k_2})}|B_{k_3}(\xi_1+\xi_2,\Omega(\xi_1,\xi_2))|^2\,d\xi_1d\xi_2\Big]^{1/2}.
    \end{split}
  \end{equation}
  Using \eqref{ela2} and the identity
  $\omega'(\xi)=-(\alpha+1)|\xi|^\alpha$, we observe that
  \begin{equation*}
    |\omega'(\xi_1)-\omega'(\xi_2)|\geq C^{-1}2^{j_2/2}\text{ if }(\xi_1,\xi_2)\in R_{j_2}\cap (U_{k_1}\times U_{k_2})\text{ and }\xi_1+\xi_2\in U_{k_3}.
  \end{equation*}
  As before, it follows that
  \begin{equation*}
    \Big[\int_{R_{j_2}\cap (U_{k_1}\times U_{k_2})}|B_{k_3}(\xi_1+\xi_2,\Omega(\xi_1,\xi_2))|^2\,d\xi_1d\xi_2\Big]^{1/2}\leq C2^{-j_2/4}\|B_{k_3}\|_{L^2}.
  \end{equation*}
  The bound \eqref{om3} follows by substituting this bound into
  \eqref{ela4}.
\end{proof}

We restate now Lemma \ref{Lemmad1} in a form that is suitable for the
bilinear estimates in the next section. For $k\in\Z$ and $j\in\Z_+$
let $V_k^j=\{(\xi,\tau)\in\R\times\R:\xi\in U_k\text{ and
}\tau-\omega(\xi)\in J_j\}$.

\begin{corollary}\label{Lemmad2}
  Assume $k_1,k_2,k_3\in\mathbb{Z}$, $j_1,j_2,j_3\in\mathbb{Z}_+$, and
  $f_{k_i}^{j_i}\in L^2(\mathbb{R}^2)$ are functions supported in
  $V_{k_i}^{j_i}$, $i=1,2$.

  (a) For any $k_1,k_2,k_3\in\mathbb{Z}$ and
  $j_1,j_2,j_3\in\mathbb{Z}_+$,
  \begin{equation}\label{on31}
    \|\mathbf{1}_{V_{k_3}^{j_3}}\cdot (f_{k_1}^{j_1}\ast f_{k_2}^{j_2})\|_{L^2}\leq C
    \mathrm{min}\,( |U_{k_1}|,|U_{k_2}|,|U_{k_3}| )^{1/2}2^{\mathrm{min}\,(j_1,j_2,j_3)/2}
    \prod_{i=1}^2\|f_{k_i}^{j_i}\|_{L^2}.
  \end{equation}

  (b) If $\{i_1,i_2,i_3\}$ is a permutation of $\{1,2,3\}$ then
  \begin{equation}\label{on32}
    \|\mathbf{1}_{V_{k_3}^{j_3}}\cdot (f_{k_1}^{j_1}\ast f_{k_2}^{j_2})\|_{L^2}\leq
    C2^{(j_1+j_2+j_3)/2}[2^{j_{i_3}}d_\al(k_{i_1},k_{i_2};k_{i_3})]^{-1/2}\prod_{i=1}^2\|f_{k_i}^{j_i}\|_{L^2}.
  \end{equation}

  (c) For any $k_1,k_2,k_3\in\mathbb{Z}$ and
  $j_1,j_2,j_3\in\mathbb{Z}_+$,
  \begin{equation}\label{on3}
    \|\mathbf{1}_{V_{k_3}^{j_3}}\cdot (f_{k_1}^{j_1}\ast f_{k_2}^{j_2})\|_{L^2}\leq
    C2^{\mathrm{min}\,(j_1,j_2,j_3)/2+\mathrm{med}\,(j_1,j_2,j_3)/4}\prod_{i=1}^2\|f_{k_i}^{j_i}\|_{L^2}.
  \end{equation}
\end{corollary}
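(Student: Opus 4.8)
The plan is to obtain Corollary \ref{Lemmad2} from Lemma \ref{Lemmad1} by a standard duality argument combined with the change of variables $\tau\mapsto\tau-\omega(\xi)$ that converts the space-time frequency variable into the ``modulation'' variable. First I would fix $g\in L^2(\R\times\R)$ supported in $V_{k_3}^{j_3}$ with $\|g\|_{L^2}\leq 1$ and write, by duality,
\[
\|\mathbf{1}_{V_{k_3}^{j_3}}\cdot(f_{k_1}^{j_1}\ast f_{k_2}^{j_2})\|_{L^2}=\sup_{g}\Big|\int_{\R^2}(f_{k_1}^{j_1}\ast f_{k_2}^{j_2})(\xi,\tau)\,\overline{g(\xi,\tau)}\,d\xi\,d\tau\Big|.
\]
Unfolding the convolution and substituting $\xi=\xi_1+\xi_2$, $\tau=\tau_1+\tau_2$, the integral on the right becomes $\int_{\R^4}f_{k_1}^{j_1}(\xi_1,\tau_1)f_{k_2}^{j_2}(\xi_2,\tau_2)\overline{g(\xi_1+\xi_2,\tau_1+\tau_2)}\,d\xi_1d\xi_2d\tau_1d\tau_2$.

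For a function $F$ supported in $V_k^j$ let $F^\sharp(\xi,\mu)=F(\xi,\mu+\omega(\xi))$; then $F^\sharp$ is supported in $U_k\times J_j$ and $\|F^\sharp\|_{L^2}=\|F\|_{L^2}$. Writing $\mu_i=\tau_i-\omega(\xi_i)\in J_{j_i}$ for $i=1,2$ in the integral above, and using the elementary identity $\omega(\xi_1)+\omega(\xi_2)-\omega(\xi_1+\xi_2)=\Omega(\xi_1,\xi_2)$ from \eqref{om1}, one checks directly that this integral equals $J\big((f_{k_1}^{j_1})^\sharp,(f_{k_2}^{j_2})^\sharp,\overline{g^\sharp}\big)$, where $J$ is the trilinear form defined in \eqref{om2}; here the three functions are supported in $U_{k_1}\times J_{j_1}$, $U_{k_2}\times J_{j_2}$, and $U_{k_3}\times J_{j_3}$ respectively, so they are admissible in Lemma \ref{Lemmad1} (the fact that $J$ carries no complex conjugation makes the passage to $\overline{g^\sharp}$ harmless for the absolute-value bounds).

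It then remains to apply Lemma \ref{Lemmad1}, parts (a), (b), and (c), to $J\big((f_{k_1}^{j_1})^\sharp,(f_{k_2}^{j_2})^\sharp,\overline{g^\sharp}\big)$ (so that $\overline{g^\sharp}$ plays the role of the third function $f_{k_3}^{j_3}$ there), to use $\|\overline{g^\sharp}\|_{L^2}=\|g\|_{L^2}\leq 1$, and to take the supremum over $g$; since the right-hand sides of \eqref{om31}, \eqref{om32}, \eqref{om3} are precisely those of \eqref{on31}, \eqref{on32}, \eqref{on3}, the three claimed bounds follow at once. There is no genuine obstacle; the only point requiring attention is the bookkeeping in the change of variables — verifying that the shift appearing in the third slot of $J$ is exactly $\Omega(\xi_1,\xi_2)$, and that the $\sharp$-transform carries $V_k^j$ onto $U_k\times J_j$ with preservation of the $L^2$ norm.
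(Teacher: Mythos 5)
Your argument is correct and is essentially identical to the paper's proof: duality against a unit-norm function supported in $V_{k_3}^{j_3}$, the substitution $\mu_i=\tau_i-\omega(\xi_i)$ reducing the pairing to the trilinear form $J$ of \eqref{om2} via the identity for $\Omega(\xi_1,\xi_2)$, and then Lemma \ref{Lemmad1} applied to the transformed functions supported in $U_{k_i}\times J_{j_i}$. The explicit complex conjugation is the only (harmless) cosmetic difference from the paper's version.
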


\begin{proof}[Proof of Corollary \ref{Lemmad2}] Clearly,
  \begin{equation*}
    \|\mathbf{1}_{V_{k_3}^{j_3}}\cdot (f_{k_1}^{j_1}\ast f_{k_2}^{j_2})\|_{L^2}=\sup_{\|f\|_{L^2}=1}\Big|\int_{V_{k_3}^{j_3}}f\cdot(f_{k_1}^{j_1}\ast f_{k_2}^{j_2})\,d\xi d\tau\Big|.
  \end{equation*}
  Let $f_{k_3}^{j_3}=\mathbf{1}_{V_{k_3}^{j_3}}\cdot f$, and then
  $g_{k_i}^{j_i}(\xi,\mu)=f_{k_i}^{j_i}(\xi,\mu+\omega(\xi))$,
  $i=1,2,3$. The functions $g_{k_i}^{j_i}$ are supported in
  $U_{k_i}\times J_{j_i}$,
  $\|g_{k_i}^{j_i}\|_{L^2}=\|f_{k_i}^{j_i}\|_{L^2}$, and, using simple
  changes of variables,
  \begin{equation*}
    \int_{V_{k_3}^{j_3}}f\cdot(f_{k_1}^{j_1}\ast f_{k_2}^{j_2})\,d\xi d\tau=J(g_{k_1}^{j_1},g_{k_2}^{j_2},g_{k_3}^{j_3}).
  \end{equation*}
  Corollary \ref{Lemmad2} follows from Lemma \ref{Lemmad1}.
\end{proof}

\section{Bilinear estimates}\label{Dyadic2}

In this section we prove several $L^2$-based bilinear estimates. All
of our estimates are based on Corollary \ref{Lemmad2}. For
$\rho\in[-1,1]$ we define the family of normed spaces $X_0^\rho$,
\begin{equation}\label{ro80}
  \begin{split}
    X_0^\rho=\{&f\in L^2:\, f \text{ supported in }I_0\times\mathbb{R}\text{ and }\\
    &\|f\|_{X_0^\rho}=\sum_{j=0}^\infty\sum_{k'=-\infty}^22^{j(1-\delta)}2^{\rho
      k'}\|\eta_j(\tau-\omega(\xi))\widetilde{\eta}_{k'}(\xi)f(\xi,\tau)\,\|_{L^2_{\xi,\tau}}<\infty\}.
  \end{split}
\end{equation}
Clearly, $X_0^{-1}=X_0$ (compare with the definition \eqref{def1''})
and $X_0^{\rho}\hookrightarrow X_0^{\rho'}$ if $\rho\leq \rho'$. In
addition, it follows easily that
\begin{equation}\label{ro81}
  X_0^{-1}\hookrightarrow Z_0\hookrightarrow X_0^{-1/2+\delta}.
\end{equation}

For $j\in\Z_+$ and $k\in\Z\setminus \{0\}$ we define the sets
\begin{equation}\label{DL}
  D_{k}^j=\{(\xi,\tau)\in\R\times\R: \xi\in I_k\text{ and }\tau-\omega(\xi)\in J_j\}\subseteq V_{|k|}^j.
\end{equation}
For $j\in\Z_+$ and $k'\in(-\infty,2]\cap\Z$ we define the sets
\begin{equation}\label{DS}
  D_{0,k'}^j=\{(\xi,\tau)\in\R\times\R: \xi\in I_0\cap \widetilde{J}_{k'}\text{ and }\tau-\omega(\xi)\in J_j\}\subseteq V_{k'-2}^j.
\end{equation} 
Using the definition, if $|k|\geq 1$ and $f_k\in Z_k$ then $f_k$ can
be written in the form
\begin{equation}\label{repr1}
  \begin{cases}
    &f_k=\sum\limits_{j=0}^\infty f_{k}^{j};\\
    &\sum\limits_{j=0}^\infty
    2^{j/2}\beta_{k,j}\|f_{k}^{j}\|_{L^2}=\|f_k\|_{Z_k},
  \end{cases}
\end{equation}
such that $f_{k}^{j}$ is supported in $D_{k}^j$. If $f_0\in X_0^\rho$
then $f_0$ can be written in the form
\begin{equation}\label{repr2}
  \begin{cases}
    &f_0=\sum\limits_{j=0}^\infty \sum\limits_{k'=-\infty}^2f^{j}_{0,k'};\\
    &\sum\limits_{j=0}^\infty \sum\limits_{k'=-\infty}^2
    2^{j(1-\delta)}2^{\rho
      k'}\|f^{j}_{0,k'}\|_{L^2}=\|f_0\|_{X_0^\rho},
  \end{cases}
\end{equation}
such that $f^{j}_{0,k'}$ is supported in $D_{0,k'}^{j}$. The
identities \eqref{repr1} and \eqref{repr2} are our main atomic
decompositions of functions in $Z_k$, $k\geq 1$, and $X_0^\rho$.

We consider first $\mathrm{Low}\times\mathrm{High}\to\mathrm{High}$
interactions.

\begin{lemma}\label{Lemmak1}
  (a) Assume $k,k_1,k_2\in\Z\setminus\{0\}$, $|n_k|\geq 2^{20}$,
  $|n_{k_1}|\leq |n_k|/ 2^{10}$, $f_{k_1}\in Z_{k_1}$, and $f_{k_2}\in
  Z_{k_2}$. Then
  \begin{equation}\label{hj1}
    \begin{split}
      (1+|n_k|)\cdot &\|\chi_k(\xi)(\tau-\omega(\xi)+i)^{-1}\cdot (f_{k_1}\ast f_{k_2})\|_{Z_k}\\
      &\leq C(1+|n_{k_1}| )^{-1/2}(1+|n_k| )^{-\delta}\cdot
      \|f_{k_1}\|_{Z_{k_1}}\|f_{k_2}\|_{Z_{k_2}}.
    \end{split}
  \end{equation}

  (b) Assume $k,k_2\in\Z\setminus\{0\}$, $|n_k|\geq 2^{20}$,
  $f_{k_2}\in Z_{k_2}$, and $f_{0}\in X_{0}^\rho$,
  $\rho\in\{-1/2+\delta,\delta\}$. Then
  \begin{equation}\label{hj1.2}
    \begin{split}
      (1+|n_k|)^{1/2-\rho+\delta}\cdot &\|\chi_k(\xi)(\tau-\omega(\xi)+i)^{-1}\cdot (f_{0}\ast f_{k_2})\|_{Z_k}\\
      &\leq C(1+|n_k| )^{-\delta}\cdot
      \|f_{0}\|_{X^\rho_0}\|f_{k_2}\|_{Z_{k_2}}.
    \end{split}
  \end{equation}
\end{lemma}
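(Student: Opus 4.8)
The plan is to combine the atomic decompositions \eqref{repr1}--\eqref{repr2} with the frequency-localized $L^2$-bilinear estimates of Corollary \ref{Lemmad2}, organized by a case analysis on which of the three modulation variables is largest. For part (a) I would write $f_{k_i}=\sum_{j_i\ge0}f_{k_i}^{j_i}$ via \eqref{repr1}, localize the output modulation $\tau-\omega(\xi)$ into the shells $J_{j_3}$, and use that on $\{|\tau-\omega(\xi)|\in J_{j_3}\}$ the multiplier $(\tau-\omega(\xi)+i)^{-1}$ has size $\lesssim 2^{-j_3}$; thus \eqref{hj1} reduces to bounding
\[
(1+|n_k|)\sum_{j_1,j_2,j_3\ge0}2^{j_3/2}\beta_{k,j_3}2^{-j_3}\big\|\mathbf{1}_{D_k^{j_3}}(f_{k_1}^{j_1}\ast f_{k_2}^{j_2})\big\|_{L^2}
\]
by the right-hand side of \eqref{hj1}. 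Two structural facts drive everything. First, since $f_{k_1}^{j_1}$ sits at frequencies $\sim|n_{k_1}|$ and $f_{k_2}^{j_2}$ at frequencies $\sim|n_k|$ with $|n_{k_1}|\le|n_k|/2^{10}$, one has $d_\al(k_1,k_2;k_3)\approx d_\al(k_1,k_3;k_2)\approx|n_k|^\al$, $d_\al(k_2,k_3;k_1)\approx|n_k|^{\al-1}|n_{k_1}|$, and $\min(|U_{k_1}|,|U_{k_2}|,|U_{k_3}|)\approx|n_{k_1}|^{1/2}$. Second, the identity $(\tau_1-\omega(\xi_1))+(\tau_2-\omega(\xi_2))-\big((\tau_1+\tau_2)-\omega(\xi_1+\xi_2)\big)=-\Omega(\xi_1,\xi_2)$ (see \eqref{om1}) forces $2^{\max(j_1,j_2,j_3)}\gtrsim|\Omega(\xi_1,\xi_2)|$ on the support of the convolution, and in this low--high regime $|\Omega(\xi_1,\xi_2)|\approx|n_k|^\al|n_{k_1}|$ (no cancellation: $\xi_1,\xi_2,\xi_1+\xi_2$ keep a fixed sign pattern and $|\xi_1|\ll|\xi_2|$, so the dominant term is $\omega(\xi_2)-\omega(\xi_1+\xi_2)\approx-(\al+1)|n_k|^\al\xi_1$).

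Then I would split into three cases. If $j_3=\max(j_1,j_2,j_3)$: apply Corollary \ref{Lemmad2}(b) with the permutation putting $j_3$ in the last slot, producing $[2^{j_3}|n_k|^\al]^{-1/2}$; combined with the multiplier gain $2^{-j_3}$ and the resonance bound $2^{j_3}\gtrsim|n_k|^\al|n_{k_1}|$, the $j_3$-sum converges with gain $\lesssim(|n_k|^\al|n_{k_1}|)^{-1/2}$ (the $\beta_{k,j_3}$-weight contributing only a lower-order term), which after summing in $j_1,j_2$ gives overall $\lesssim(1+|n_k|)|n_k|^{-\al}|n_{k_1}|^{-1/2}$; this is admissible because $\al>1+\de$ and $|n_{k_1}|\le|n_k|/2^{10}$ lets one convert surplus powers of $|n_k|$ into powers of $|n_{k_1}|$. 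If $j_2=\max$: use Corollary \ref{Lemmad2}(b) with $j_2$ in the last slot (still $d_\al\approx|n_k|^\al$); the $j_3$-sum is now finite and harmless, and $2^{j_2}\gtrsim|n_k|^\al|n_{k_1}|$ together with $\beta_{k_2,j_2}\approx\beta_{k,j_2}$ (as $|n_{k_2}|\approx|n_k|$) lets the loss be absorbed by $\|f_{k_2}\|_{Z_{k_2}}$. The delicate case is $j_1=\max$: use Corollary \ref{Lemmad2}(b) with $j_1$ in the last slot, via $d_\al(k_2,k_3;k_1)\approx|n_k|^{\al-1}|n_{k_1}|$, and exploit that $2^{j_1}\gtrsim|n_k|^\al|n_{k_1}|\ge|n_{k_1}|^{\al+1}$ forces the source weight $\beta_{k_1,j_1}\approx(2^{j_1}/|n_{k_1}|^{\al+1})^{1/2-\de}$ to be \emph{large}, of size $\gtrsim(|n_k|/|n_{k_1}|)^{\al(1/2-\de)}$ --- exactly the room needed to compensate the weaker transversality. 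Summing the geometric series in the remaining indices in each case finishes part (a); Corollary \ref{Lemmad2}(a),(c) can serve as fallbacks, though the resonance lower bound already rules out an "all modulations small" contribution.

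Part (b) runs identically after replacing \eqref{repr1} by \eqref{repr2}: decompose $f_0=\sum_j\sum_{k'\le2}f_{0,k'}^{j}$ with $f_{0,k'}^{j}$ supported in $D_{0,k'}^{j}$, summed with weight $2^{j(1-\de)}2^{\rho k'}$. The low frequency is now $\sim2^{k'}$, so $d_\al\approx|n_k|^\al$, $\min(|U|)\approx2^{k'}$, and $|\Omega(\xi_1,\xi_2)|\approx|n_k|^\al2^{k'}$; the same case analysis, using $\rho\in\{-1/2+\de,\de\}$ and the extra factor $2^{k'}$ available from the frequency localization, gives \eqref{hj1.2}.

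The main obstacle is the bookkeeping of the modulation summation against the weights $\beta_{k,j}$ and $\beta_{k_i,j_i}$: one must verify in each regime that the net gain --- after paying the derivative $1+|n_k|$ --- is the sharp $(1+|n_{k_1}|)^{-1/2}(1+|n_k|)^{-\de}$, rather than the $(1+|n_k|)^{-\al/2}$ that a crude use of Corollary \ref{Lemmad2}(a) alone would give (which is insufficient since $\al<2$). The genuinely subtle point is the regime in which the large, resonant modulation lands on the low-frequency input $f_{k_1}$: there the argument must rely on the largeness of the weight $\beta_{k_1,j_1}$ rather than on any dispersive smoothing.
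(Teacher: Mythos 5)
Your proposal follows essentially the same route as the paper's proof: the atomic decompositions \eqref{repr1}--\eqref{repr2}, the bilinear bounds of Corollary \ref{Lemmad2} with the transversality values $|n_k|^{\al}$ and $|n_{k_1}|\,|n_k|^{\al-1}$, the resonance lower bound $2^{\max(j_1,j_2,j)}\gtrsim |n_{k_1}|\,|n_k|^{\al}$ coming from \eqref{om20}, and, in the delicate case where the largest modulation sits on the low-frequency factor, the largeness of $\beta_{k_1,j_1}$ (respectively the $2^{j_1(1-\delta)}$ weight of $X_0^\rho$ in part (b)), exactly as in \eqref{hj3}--\eqref{hj6} and \eqref{hj20}--\eqref{hj60}. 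The only detail your sketch of part (b) glosses over is the paper's preliminary split: when $2^{k'}|n_k|^{1+30\delta}\leq 1$ the resonance bound is vacuous and one just uses the trivial estimate \eqref{on31} with the factor $2^{k'/2}$.
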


\begin{proof}[Proof of Lemma \ref{Lemmak1}] For part (a), we may
  assume $|n_{k_2}-n_k|\leq |n_k|/2^5$. Using \eqref{repr1}, we may
  assume $f_{k_1}=f_{k_1}^{j_1}$ is supported in $D_{k_1}^{j_1}$ and
  $f_{k_2}=f_{k_2}^{j_2}$ is supported in $D_{k_2}^{j_2}$. For
  \eqref{hj1} it suffices to prove that
  \begin{equation}\label{hj2}
    \begin{split}
      &|n_k|\cdot \sum_{j\in\Z_+}2^{-j/2}\beta_{k,j}\|\mathbf{1}_{D_k^j}\cdot (f_{k_1}^{j_1}\ast f_{k_2}^{j_2})\|_{L^2}\\
      &\leq C|n_{k_1}|^{-1/2}|n_k|^{-\delta}\cdot
      2^{j_1/2}\beta_{k_1,j_1}\|f_{k_1}^{j_1}\|_{L^2}\cdot
      2^{j_2/2}\beta_{k_2,j_2}\|f_{k_2}^{j_2}\|_{L^2}.
    \end{split}
  \end{equation}
  Using \eqref{on32} and \eqref{hj100},
  \begin{equation}\label{hj3}
    \|\mathbf{1}_{D_k^j}\cdot (f_{k_1}^{j_1}\ast f_{k_2}^{j_2})\|_{L^2}\leq C2^{(j_1+j_2+j)/2}[(2^j+2^{j_2})|n_k|^{\alpha}+2^{j_1}|n_{k_1}| |n_{k}|^{\alpha-1}]^{-1/2}\Pi,
  \end{equation} 
  where $\Pi=\|f_{k_1}^{j_1}\|_{L^2}\cdot \|f_{k_2}^{j_2}\|_{L^2}$.
  Let $\Pi'=2^{j_1/2}\beta_{k_1,j_1}\|f_{k_1}^{j_1}\|_{L^2}\cdot
  2^{j_2/2}\beta_{k_2,j_2}\|f_{k_2}^{j_2}\|_{L^2}$. Using \eqref{hj3},
  if $j=\max(j_1,j_2,j)$ then
  \begin{equation}\label{hj4}
    |n_k|\cdot 2^{-j/2}\beta_{k,j}\|\mathbf{1}_{D_k^j}\cdot (f_{k_1}^{j_1}\ast f_{k_2}^{j_2})\|_{L^2}\leq C\frac{2^{-j/2}\beta_{k,j}}{\beta_{k_1,j_1}\beta_{k_2,j_2}}|n_k|^{1-\alpha/2}\cdot \Pi'.
  \end{equation}
  If $j_2=\max(j_1,j_2,j)$ then
  \begin{equation}\label{hj5}
    |n_k|\cdot 2^{-j/2}\beta_{k,j}\|\mathbf{1}_{D_k^j}\cdot (f_{k_1}^{j_1}\ast f_{k_2}^{j_2})\|_{L^2}\leq C\frac{\beta_{k,j}}{2^{j_2/2}\beta_{k_1,j_1}\beta_{k_2,j_2}}|n_k|^{1-\alpha/2}\cdot \Pi'.
  \end{equation}
  If $j_1=\max(j_1,j_2,j)$ then
  \begin{equation}\label{hj6}
    |n_k|\cdot 2^{-j/2}\beta_{k,j}\|\mathbf{1}_{D_k^j}\cdot (f_{k_1}^{j_1}\ast f_{k_2}^{j_2})\|_{L^2}\leq C\frac{\beta_{k,j}}{2^{j_1/2}\beta_{k_1,j_1}\beta_{k_2,j_2}}|n_{k_1}|^{-1/2}|n_k|^{1-(\alpha-1)/2}\cdot \Pi'.
  \end{equation}

  We observe now that for any $\xi_1,\xi_2\in\mathbb{R}$
  \begin{equation}\label{om20}
    \frac{|\Omega(\xi_1,\xi_2)|}{\mathrm{min}\,(|\xi_1|,|\xi_2|,|\xi_1+\xi_2|)\cdot
      \mathrm{max}\,( |\xi_1|^\alpha ,|\xi_2|^\alpha ,|\xi_1+\xi_2|^\alpha )}\in[2^{-4},2^{4}].
  \end{equation}
  Thus, by examining the supports of the functions,
  $\mathbf{1}_{D_k^j}\cdot (f_{k_1}^{j_1}\ast f_{k_2}^{j_2})\equiv 0$
  unless
  \begin{equation}\label{new2}
    2^{\mathrm{max}\,(j_1,j_2,j)}\geq C^{-1}|n_{k_1}| |n_k|^\alpha.
  \end{equation}
  Thus, using \eqref{hj4} and \eqref{def1'},
  \begin{equation*}
    \begin{split}
      &|n_k|\cdot \sum_{j\geq \max(j_1,j_2)}2^{-j/2}\beta_{k,j}\|\mathbf{1}_{D_k^j}\cdot (f_{k_1}^{j_1}\ast f_{k_2}^{j_2})\|_{L^2}\\
      &\leq C\sum_{2^j\geq C^{-1}|n_{k_1}|
        |n_k|^\alpha}\frac{2^{-j/2}\beta_{k,j}}{\beta_{k_1,j_1}\beta_{k_2,j_2}}|n_k|^{1-\alpha/2}\cdot
      \Pi'\leq C|n_{k_1}|^{-1/2}|n_k|^{1-\alpha}\cdot \Pi'.
    \end{split}
  \end{equation*}
  If $j_2\geq j_1$ then, using \eqref{hj5}, \eqref{new2}, and
  \eqref{def1'}
  \begin{equation*}
    \begin{split}
      &|n_k|\cdot \sum_{j\leq j_2}2^{-j/2}\beta_{k,j}\|\mathbf{1}_{D_k^j}\cdot (f_{k_1}^{j_1}\ast f_{k_2}^{j_2})\|_{L^2}\\
      &\leq C\sum_{j\leq
        j_2}\frac{\beta_{k,j}}{2^{j_2/2}\beta_{k_1,j_1}\beta_{k_2,j_2}}|n_k|^{1-\alpha/2}\cdot
      \Pi'\leq C|n_{k_1}|^{-1/2}|n_k|^{1-\alpha}\ln(2+|n_k| )\cdot
      \Pi'.
    \end{split}
  \end{equation*}
  Finally, if $j_1\geq j_2$ then, using \eqref{hj6}, \eqref{new2}, and
  \eqref{def1'}
  \begin{equation*}
    \begin{split}
      |n_k|\cdot \sum_{j\leq j_1}2^{-j/2}&\beta_{k,j}\|\mathbf{1}_{D_k^j}\cdot (f_{k_1}^{j_1}\ast f_{k_2}^{j_2})\|_{L^2}\\
      &\leq C\sum_{j\leq j_1}\frac{\beta_{k,j}}{2^{j_1/2}\beta_{k_1,j_1}\beta_{k_2,j_2}}|n_{k_1}|^{-1/2}|n_k|^{1-(\alpha-1)/2}\cdot \Pi'\\
      &\leq C|n_{k_1}|^{-1/2}|n_k|^{1-\alpha}\ln(2+|n_k| )\cdot \Pi'.
    \end{split}
  \end{equation*}
  The estimate \eqref{hj2} follows from the last three bounds.
  \medskip
 
  For part (b), using \eqref{repr1} and \eqref{repr2} we may assume
  $f_{0}=f_{0,k'}^{j_1}$ is supported in $D_{0,k'}^{j_1}$ and
  $f_{k_2}=f_{k_2}^{j_2}$ is supported in $D_{k_2}^{j_2}$. For
  \eqref{hj1.2} it suffices to prove that
  \begin{equation}\label{hj20}
    \begin{split}
      &(|n_k|+2^{-k'/2}|n_k|^{1/2}|n_k|^{-19\delta})\cdot \sum_{j\in\Z_+}2^{-j/2}\beta_{k,j}\|\mathbf{1}_{D_k^j}\cdot (f_{0,k'}^{j_1}\ast f_{k_2}^{j_2})\|_{L^2}\\
      &\leq C|n_k|^{-20\delta}\cdot
      2^{j_1(1-\delta)}2^{-k'(1/2-\delta)
      }\|f_{0,k'}^{j_1}\|_{L^2}\cdot
      2^{j_2/2}\beta_{k_2,j_2}\|f_{k_2}^{j_2}\|_{L^2}.
    \end{split}
  \end{equation}
  Using \eqref{on31},
  \begin{equation}\label{hj25}
    \|\mathbf{1}_{D_k^j}\cdot (f_{0,k'}^{j_1}\ast f_{k_2}^{j_2})\|_{L^2}\leq C2^{j_2/2}2^{k'/2}\cdot \|f_{0,k'}^{j_1}\|_{L^2}\cdot \|f_{k_2}^{j_2}\|_{L^2},
  \end{equation}
  and the bound \eqref{hj20} follows easily if
  $2^{k'}|n_k|^{1+30\delta}\leq 1$.

  Assume that
  \begin{equation}\label{hj34}
    2^{k'}|n_k|^{1+30\delta} \geq  1.
  \end{equation}
  In this case $|n_k|\geq2^{-k'/2}|n_k|^{1/2}|n_k|^{-19\delta}$. Using
  \eqref{on32} and \eqref{hj100},
  \begin{equation}\label{hj21}
    \|\mathbf{1}_{D_k^j}\cdot (f_{0,k'}^{j_1}\ast f_{k_2}^{j_2})\|_{L^2}\leq C2^{(j_1+j_2+j)/2}[(2^j+2^{j_2})|n_k|^{\alpha}]^{-1/2}\Pi,
  \end{equation} 
  where $\Pi=\|f_{0,k'}^{j_1}\|_{L^2}\cdot \|f_{k_2}^{j_2}\|_{L^2}$.
  Using \eqref{om20}, $\mathbf{1}_{D_k^j}\cdot (f_{0,k'}^{j_1}\ast
  f_{k_2}^{j_2})\equiv 0$ unless
  \begin{equation}\label{new3}
    2^{\mathrm{max}\,(j_1,j_2,j)}\geq C^{-1}2^{k'}|n_k|^\alpha.
  \end{equation}
  Let
  $\Pi'=2^{j_1(1-\delta)}2^{-k'(1/2-\delta)}\|f_{0,k'}^{j_1}\|_{L^2}\cdot
  2^{j_2/2}\beta_{k_2,j_2}\|f_{k_2}^{j_2}\|_{L^2}$. If
  $j=\max(j_1,j_2,j)$ then, using \eqref{hj21},
  \begin{equation}\label{hj40}
    |n_k|\cdot 2^{-j/2}\beta_{k,j}\|\mathbf{1}_{D_k^j}\cdot (f_{0,k'}^{j_1}\ast f_{k_2}^{j_2})\|_{L^2}\leq C\frac{2^{-j/2}\beta_{k,j}}{2^{j_1(1/2-\delta)}\beta_{k_2,j_2}}2^{k'(1/2-\delta)}|n_k|^{1-\alpha/2}\cdot \Pi'.
  \end{equation}
  If $j_2=\max(j_1,j_2,j)$ then, using \eqref{hj21},
  \begin{equation}\label{hj50}
    |n_k|\cdot 2^{-j/2}\beta_{k,j}\|\mathbf{1}_{D_k^j}\cdot (f_{0,k'}^{j_1}\ast f_{k_2}^{j_2})\|_{L^2}\leq C\frac{\beta_{k,j}}{2^{j_1(1/2-\delta)}2^{j_2/2}\beta_{k_2,j_2}}2^{k'(1/2-\delta)}|n_k|^{1-\alpha/2}\cdot \Pi'.
  \end{equation}
  If $j_1=\max(j_1,j_2,j)$ then, using \eqref{hj25}
  \begin{equation}\label{hj60}
    |n_k|\cdot 2^{-j/2}\beta_{k,j}\|\mathbf{1}_{D_k^j}\cdot (f_{0,k'}^{j_1}\ast f_{k_2}^{j_2})\|_{L^2}\leq C\frac{2^{-j/2}\beta_{k,j}}{2^{j_1(1-\delta)}\beta_{k_2,j_2}}2^{k'(1-\delta)}|n_k|\cdot \Pi'.
  \end{equation}
  Thus, using \eqref{hj40}, \eqref{new3}, and \eqref{def1'},
  \begin{equation*}
    \begin{split}
      &|n_k|\cdot \sum_{j\geq \max(j_1,j_2)}2^{-j/2}\beta_{k,j}\|\mathbf{1}_{D_k^j}\cdot (f_{0,k'}^{j_1}\ast f_{k_2}^{j_2})\|_{L^2}\\
      &\leq C\sum_{2^j\geq
        C^{-1}2^{k'}|n_k|^\alpha}\frac{2^{-j/2}\beta_{k,j}}{2^{j_1(1/2-\delta)}\beta_{k_2,j_2}}2^{k'(1/2-\delta)}|n_k|^{1-\alpha/2}\cdot
      \Pi'\leq C2^{-\delta k'}|n_k|^{1-\alpha}\cdot \Pi'.
    \end{split}
  \end{equation*}
  If $j_2\geq j_1$ then, using \eqref{hj50}, \eqref{new3}, and
  \eqref{def1'}
  \begin{equation*}
    \begin{split}
      |n_k|\cdot \sum_{j\leq j_2}2^{-j/2}\beta_{k,j}&\|\mathbf{1}_{D_k^j}\cdot (f_{0,k'}^{j_1}\ast f_{k_2}^{j_2})\|_{L^2}\\
      &\leq C\sum_{j\leq j_2}\frac{\beta_{k,j}}{2^{j_1(1/2-\delta)}2^{j_2/2}\beta_{k_2,j_2}}2^{k'(1/2-\delta)}|n_k|^{1-\alpha/2}\cdot \Pi'\\
      &\leq C2^{-\delta k'}|n_k|^{1-\alpha}\ln(2+|n_k| )\cdot \Pi'.
    \end{split}
  \end{equation*}
  Finally, if $j_1\geq j_2$ then, using \eqref{hj60}, \eqref{new3},
  and \eqref{def1'}
  \begin{equation*}
    \begin{split}
      &|n_k|\cdot \sum_{j\leq j_1}2^{-j/2}\beta_{k,j}\|\mathbf{1}_{D_k^j}\cdot (f_{0,k'}^{j_1}\ast f_{k_2}^{j_2})\|_{L^2}\\
      &\leq C\sum_{j\leq
        j_1}\frac{2^{-j/2}\beta_{k,j}}{2^{j_1(1-\delta)}\beta_{k_2,j_2}}2^{k'(1-\delta)}|n_k|\cdot\Pi'\leq
      C|n_k|^{1-\alpha+\alpha\delta}\cdot \Pi'.
    \end{split}
  \end{equation*}
  The estimate \eqref{hj20} follows from the last three bounds and
  \eqref{hj34}.
\end{proof}

We consider now $\mathrm{High}\times\mathrm{High}\to\mathrm{Low}$
interactions.

\begin{lemma}\label{Lemmak2}
  Assume $k_1,k_2,k\in\Z\setminus\{0\}$, $\min( |n_{k_1}|,|n_{k_2}|
  )\geq 2^{10}(1+|n_k| )$, $f_{k_1}\in Z_{k_1}$, and $f_{k_2}\in
  Z_{k_2}$.  Then
  \begin{equation}\label{hw1}
    \begin{split}
      (1+|n_k|)\cdot &\|\chi_k(\xi)(\tau-\omega(\xi)+i)^{-1}\cdot (f_{k_1}\ast f_{k_2})\|_{Z_k}\\
      &\leq C(1+|n_{k}| )^{-1/2}(1+ |n_{k_1}|+|n_{k_2}|
      )^{-\delta}\cdot \|f_{k_1}\|_{Z_{k_1}}\|f_{k_2}\|_{Z_{k_2}}.
    \end{split}
  \end{equation}
  In addition, if $\min( |n_{k_1}|,|n_{k_2}| )\geq 2^{10}$,
  $f_{k_1}\in Z_{k_1}$, and $f_{k_2}\in Z_{k_2}$ then
  \begin{equation}\label{hw1.6}
    \begin{split}
      \|\chi_0(\xi)(\tau-&\omega(\xi)+i)^{-1}\cdot (f_{k_1}\ast f_{k_2})\|_{X_0^{-1/2+\delta}}\\
      &\leq C(1+ |n_{k_1}|+|n_{k_2}| )^{-\delta}\cdot
      \|f_{k_1}\|_{Z_{k_1}}\|f_{k_2}\|_{Z_{k_2}}.
    \end{split}
  \end{equation}
\end{lemma}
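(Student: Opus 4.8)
The plan is to reduce everything to the trilinear bounds of Corollary \ref{Lemmad2} via the atomic decompositions \eqref{repr1}, exactly as in the proof of Lemma \ref{Lemmak1}. Using \eqref{repr1} we may assume $f_{k_1}=f_{k_1}^{j_1}$ is supported in $D_{k_1}^{j_1}$ and $f_{k_2}=f_{k_2}^{j_2}$ in $D_{k_2}^{j_2}$; by the frequency support of the convolution we may also assume $||n_{k_1}|-|n_{k_2}||\lesssim 1+|n_k|$, so that $|n_{k_1}|\approx|n_{k_2}|$ and the relevant large parameter is $N:=\max(|n_{k_1}|,|n_{k_2}|)$. For \eqref{hw1} it then suffices to prove the dyadic bound
\begin{equation*}
  (1+|n_k|)\sum_{j\in\Z_+}2^{-j/2}\beta_{k,j}\|\mathbf{1}_{D_k^j}\cdot(f_{k_1}^{j_1}\ast f_{k_2}^{j_2})\|_{L^2}
  \leq C(1+|n_k|)^{-1/2}N^{-\delta}\,\Pi',
\end{equation*}
where $\Pi'=2^{j_1/2}\beta_{k_1,j_1}\|f_{k_1}^{j_1}\|_{L^2}\cdot 2^{j_2/2}\beta_{k_2,j_2}\|f_{k_2}^{j_2}\|_{L^2}$, and a similar bound with the $X_0^{-1/2+\delta}$-norm on the left for \eqref{hw1.6}.

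The key inputs are the resonance identity \eqref{om20}, which here gives $|\Omega(\xi_1,\xi_2)|\approx |n_k|\cdot N^\alpha$ on the relevant supports (since the minimum frequency is the output frequency $\approx|n_k|$ and the max is $\approx N$), forcing $2^{\max(j_1,j_2,j)}\gtrsim |n_k|N^\alpha$; and Corollary \ref{Lemmad2}(b), which with the modulation $\tau$-factor of $D_k^j$ in the role of $i_3$ gives
\begin{equation*}
  \|\mathbf{1}_{D_k^j}\cdot(f_{k_1}^{j_1}\ast f_{k_2}^{j_2})\|_{L^2}\leq C2^{(j_1+j_2+j)/2}\big[(2^j+2^{j_{\max(1,2)}})\,d_\al(\cdot)\big]^{-1/2}\Pi,
\end{equation*}
where $d_\al\approx N^\alpha$ for the $\mathrm{High}\times\mathrm{High}\to\mathrm{Low}$ configuration (the two high frequencies have comparable size but the difference $||\xi_1|^\alpha-|\xi_2|^\alpha|$ can still be as large as $\approx N^\alpha$, or one can instead bound $d_\al$ below using that the output is low). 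As in Lemma \ref{Lemmak1} one splits into the three cases according to which of $j_1,j_2,j$ is the maximum; in each case one plugs in the dyadic estimate, uses $\beta_{k,j}\approx 2^{j(1/2-\delta)}/|n_k|^{(\alpha+1)(1/2-\delta)}$ for the relevant $j\gtrsim |n_k|N^\alpha$, sums the (convergent, up to a logarithm absorbed by $N^{-\delta}$) geometric or near-geometric series in $j$, and checks that the resulting power of $N$ is $\leq N^{-\delta}$ and the power of $|n_k|$ is $\leq(1+|n_k|)^{-1/2}$. The gain $N^{-\delta}$ comes from the fact that $\alpha>1$: the high modulation $2^j\gtrsim|n_k|N^\alpha$ together with the weights $\beta_{k_i,j_i}$ produces a surplus power $N^{-\delta}$ with room to spare, which is exactly the point of choosing $\delta=(\alpha-1)/100$.

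For \eqref{hw1.6} the argument is the same except that the output lands at frequency $\approx 1$ (the $\chi_0$ band) rather than $|n_k|$, so in place of the weight $\beta_{k,j}$ one uses the $X_0^{-1/2+\delta}$-weights $2^{j(1-\delta)}2^{(-1/2+\delta)k'}$ from \eqref{ro80}; the resonance function is now $\approx N^\alpha$ (minimum frequency $\approx 1$ up to the dyadic scale $2^{k'}$), one has $2^{\max(j_1,j_2,j)}\gtrsim 2^{k'}N^\alpha$, and one sums first in $j$ and then in $k'\leq 2$, the latter sum converging because of the weight $2^{(-1/2+\delta)k'}$ against the gain $2^{k'/2}$ from Corollary \ref{Lemmad2}(a) in the low-modulation regime. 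The main obstacle I anticipate is purely bookkeeping: correctly tracking the weights $\beta_{k,j}$ in the regime where $2^j$ is forced to be very large (so that $\beta_{k,j}\gg 1$ and the $\beta$-weights themselves contribute powers of $N$ and $|n_k|$), and verifying that after summing, all the exponents of $N$ and $|n_k|$ fall on the correct side of the target — there is no essential new idea beyond Lemma \ref{Lemmak1}, only more cases and a different output-frequency weight.
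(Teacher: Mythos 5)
Your proposal follows the paper's route: atomic decomposition \eqref{repr1}, the resonance lower bound from \eqref{om20}, Corollary \ref{Lemmad2}, a case split on which of $j_1,j_2,j$ is maximal, and then summation against the weights; the treatment of \eqref{hw1.6} (Corollary \ref{Lemmad2}(a), net weight $2^{-j\delta}2^{k'\delta}$, constraint $2^{\max(j_1,j_2,j)}\gtrsim 2^{k'}N^\alpha$) matches the paper and is fine. However, there is a concrete error in the key dyadic input for \eqref{hw1}. The quantity $d_\alpha(k_1,k_2;k)$ that Corollary \ref{Lemmad2}(b) pairs with the output modulation $2^{j}$ is, by \eqref{hj100}, an \emph{infimum} over $\xi_1\in U_{k_1}$, $\xi_2\in U_{k_2}$ with $\xi_1+\xi_2\in U_{k}$. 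In the $\mathrm{High}\times\mathrm{High}\to\mathrm{Low}$ configuration the two inputs must have opposite signs and $\big||\xi_1|-|\xi_2|\big|=|\xi_1+\xi_2|\approx |n_k|$, so $\big||\xi_1|^\alpha-|\xi_2|^\alpha\big|\approx |n_k|N^{\alpha-1}$ on the whole constrained set; it is \emph{not} of size $N^\alpha$, and "can be as large as" is in any case irrelevant for an infimum. The size $N^\alpha$ is only available in the permutations where the third slot is one of the high-frequency factors, i.e. paired with $2^{j_1}$ or $2^{j_2}$. So your displayed estimate $\big[(2^{j}+2^{\max(j_1,j_2)})N^\alpha\big]^{-1/2}$ is stronger than what Corollary \ref{Lemmad2}(b) yields; the correct statement is the paper's \eqref{hw4}, namely $\big[2^{\max(j_1,j_2)}|n_{k_1}|^\alpha+2^{j}|n_k||n_{k_1}|^{\alpha-1}\big]^{-1/2}$.

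This matters because you defer all exponent bookkeeping ("one checks that the resulting powers\ldots"), and the deferral sits exactly where the error is: in the case $j=\max(j_1,j_2,j)$ the true gain is weaker by a factor $(N/|n_k|)^{1/2}$ than what you assume. The argument is repairable: with the correct $d_\alpha$, the constraint $2^{\max(j_1,j_2,j)}\gtrsim |n_k|N^\alpha$ gives, after summing $2^{-j/2}\beta_{k,j}$, a bound of order $N^{1/2-\alpha}2^{(j_1+j_2)/2}\Pi$, and $N^{1/2-\alpha}\leq |n_k|^{-1/2}N^{-\delta}$ precisely because $N\geq 2^{10}|n_k|$ and $\delta=(\alpha-1)/100<\alpha-1$; the cases $j_1=\max$ or $j_2=\max$ use the $2^{\max(j_1,j_2)}N^\alpha$ pairing and produce the logarithm the paper absorbs. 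That computation (which is what the paper carries out) has to be done; it is not a formal consequence of the inputs as you stated them.
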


\begin{proof}[Proof of Lemma \ref{Lemmak2}] Clearly, we may assume
  $|n_{k_1}/n_{k_2}|\in[1/2,2]$ and $n_{k_1}\cdot n_{k_2}<0$. Using
  \eqref{repr1}, we may assume $f_{k_1}=f_{k_1}^{j_1}$ is supported in
  $D_{k_1}^{j_1}$ and $f_{k_2}=f_{k_2}^{j_2}$ is supported in
  $D_{k_2}^{j_2}$.

  For \eqref{hw1} it suffices to prove that
  \begin{equation}\label{hw2}
    \begin{split}
      &|n_k|\cdot \sum_{j\in\Z_+}2^{-j/2}\beta_{k,j}\|\mathbf{1}_{D_k^j}\cdot (f_{k_1}^{j_1}\ast f_{k_2}^{j_2})\|_{L^2}\\
      &\leq C|n_{k}|^{-1/2}|n_{k_1}|^{-\delta}\cdot
      2^{j_1/2}\beta_{k_1,j_1}\|f_{k_1}^{j_1}\|_{L^2}\cdot
      2^{j_2/2}\beta_{k_2,j_2}\|f_{k_2}^{j_2}\|_{L^2}.
    \end{split}
  \end{equation}
  In view of \eqref{om20}, $\mathbf{1}_{D_k^j}\cdot (f_{k_1}^{j_1}\ast
  f_{k_2}^{j_2})\equiv 0$ unless
  \begin{equation}\label{hw3}
    2^{\mathrm{max}\,(j_1,j_2,j)}\geq C^{-1}|n_{k}| |n_{k_1}|^\alpha.
  \end{equation}
  Using \eqref{on32},
  \begin{equation}\label{hw4}
    \|\mathbf{1}_{D_k^j}\cdot (f_{k_1}^{j_1}\ast f_{k_2}^{j_2})\|_{L^2}\leq C2^{(j_1+j_2+j)/2}[2^{\max(j_1,j_2)}|n_{k_1}|^\alpha+2^j|n_k| |n_{k_1}|^{\alpha-1}]^{-1/2}\Pi,
  \end{equation}
  where $\Pi=\|f_{k_1}^{j_1}\|_{L^2}\cdot \|f_{k_2}^{j_2}\|_{L^2}$.
  Using \eqref{hw3} and \eqref{hw4},
  \begin{equation*}
    \begin{split}
      |n_k|\cdot &\sum_{j\geq \max(j_1,j_2)}2^{-j/2}\beta_{k,j}\|\mathbf{1}_{D_k^j}\cdot (f_{k_1}^{j_1}\ast f_{k_2}^{j_2})\|_{L^2}\\
      &\leq C\sum_{2^j\geq C^{-1}|n_{k}| |n_{k_1}|^\alpha}2^{-j/2}\beta_{k,j}\cdot |n_k|^{1/2}|n_{k_1}|^{-(\alpha-1)/2}\cdot 2^{(j_1+j_2)/2}\Pi\\
      &\leq C|n_k|^{-1/2}|n_{k_1}|^{-(\alpha-1)/2}\cdot
      2^{(j_1+j_2)/2}\Pi.
    \end{split}
  \end{equation*} 
  Using again \eqref{hw3} and \eqref{hw4},
  \begin{equation*}
    \begin{split}
      |n_k|\cdot &\sum_{j\leq  \max(j_1,j_2)}2^{-j/2}\beta_{k,j}\|\mathbf{1}_{D_k^j}\cdot (f_{k_1}^{j_1}\ast f_{k_2}^{j_2})\|_{L^2}\\
      &\leq C\sum_{j\leq \max(j_1,j_2) }\beta_{k,j}\cdot 2^{-\max(j_1,j_2)/2}|n_k\|n_{k_1}|^{-\alpha/2}\cdot 2^{(j_1+j_2)/2}\Pi\\
      &\leq C\log(2+|n_{k_1}| )|n_{k_1}|^{-\alpha/2}\cdot
      2^{(j_1+j_2)/2}\Pi,
    \end{split}
  \end{equation*} 
  since in this last estimate we may assume
  $2^{\mathrm{max}\,(j_1,j_2)}\geq C^{-1}|n_{k}| |n_{k_1}|^\alpha$.
  The bound \eqref{hw2} follows from the last two estimates.  \medskip

  For \eqref{hw1.6} it suffices to prove that
  \begin{equation}\label{hl2}
    \begin{split}
      \sum_{k'=-\infty}^2&\sum_{j\in\Z_+}2^{-j\delta }2^{-k'(1/2-\delta)}\|\mathbf{1}_{D_{0,k'}^j}\cdot (f_{k_1}^{j_1}\ast f_{k_2}^{j_2})\|_{L^2}\\
      &\leq C|n_{k_1}|^{-\delta}\cdot
      2^{j_1/2}\beta_{k_1,j_1}\|f_{k_1}^{j_1}\|_{L^2}\cdot
      2^{j_2/2}\beta_{k_2,j_2}\|f_{k_2}^{j_2}\|_{L^2}.
    \end{split}
  \end{equation}
  In view of \eqref{om20}, $\mathbf{1}_{D_{0,k'}^j}\cdot
  (f_{k_1}^{j_1}\ast f_{k_2}^{j_2})\equiv 0$ unless
  \begin{equation}\label{hl3}
    2^{\mathrm{max}\,(j_1,j_2,j)}\geq C^{-1}2^{k'} |n_{k_1}|^\alpha.
  \end{equation}
  Using \eqref{on31},
  \begin{equation}\label{hl4}
    \|\mathbf{1}_{D_{0,k'}^j}\cdot (f_{k_1}^{j_1}\ast f_{k_2}^{j_2})\|_{L^2}\leq C2^{k'/2}2^{-\max(j_1,j_2)/2}\cdot 2^{(j_1+j_2)/2}\Pi,
  \end{equation}
  where $\Pi=\|f_{k_1}^{j_1}\|_{L^2}\cdot \|f_{k_2}^{j_2}\|_{L^2}$.
  Clearly,
  \begin{equation*}
    \sum_{2^{\mathrm{max}\,(j_1,j_2,j)}\geq C^{-1}2^{k'} |n_{k_1}|^\alpha}2^{-j\delta }\cdot 2^{-\max(j_1,j_2)/2}\leq C(1+2^{k'}|n_{k_1}|^\alpha)^{-\delta},
  \end{equation*}
  and the bound \eqref{hl2} follows from \eqref{hl4}.
\end{proof}

Finally, we consider interactions of comparable frequencies.

\begin{lemma}\label{Lemmak3}
  Assume $k_1,k_2\in\Z$, $k\in\Z\setminus\{0\}$, $(1+|n_{k_i}|
  )/(1+|n_k| )\in [2^{-20},2^{20}]$, $i=1,2$, $f_{k_1}\in Z_{k_1}$,
  and $f_{k_2}\in Z_{k_2}$. Then
  \begin{equation}\label{hb1}
    \begin{split}
      (1+|n_k|)\cdot &\|\chi_k(\xi)(\tau-\omega(\xi)+i)^{-1}\cdot (f_{k_1}\ast f_{k_2})\|_{Z_k}\\
      &\leq C\Lambda(k_1,k_2,k)(1+|n_k| )^{-\delta}\cdot
      \|f_{k_1}\|_{Z_{k_1}}\|f_{k_2}\|_{Z_{k_2}},
    \end{split}
  \end{equation}
  where, with $A=\min\big( \big\|n_{k_1}|-|n_{k_2}|\big|,\big|
  |n_k|-|n_{k_1}|\big|,\big| |n_k|-|n_{k_2}|\big| \big)$,
  \begin{equation*}
    \Lambda(k_1,k_2,k)=
    \begin{cases}
      1&\text{ if }A\leq 2^{50}(1+|n_k| )^{1/2};\\
      A^{-1/2}&\text{ if }A> 2^{50}(1+|n_k| )^{1/2}.
    \end{cases}
  \end{equation*}
  In addition, if $k_1,k_2\in\Z$, $1+|n_{k_i}|\in [2^{-20},2^{20}]$,
  $i=1,2$, $f_{k_1}\in Z_{k_1}$, and $f_{k_2}\in Z_{k_2}$, then
  \begin{equation}\label{hb1.3}
    \begin{split}
      &\|\chi_0(\xi)(\tau-\omega(\xi)+i)^{-1}\cdot (f_{k_1}\ast
      f_{k_2})\|_{X_0^{-1/2+\delta}}\leq
      C\|f_{k_1}\|_{Z_{k_1}}\|f_{k_2}\|_{Z_{k_2}}.
    \end{split}
  \end{equation}
\end{lemma}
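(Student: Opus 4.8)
The plan is to argue exactly as in the proofs of Lemmas~\ref{Lemmak1} and~\ref{Lemmak2}. Take first \eqref{hb1} and assume $|n_k|$ is large (bounded $|n_k|$ is entirely similar to, and simpler than, the argument for \eqref{hb1.3} below, with $\Lambda\equiv 1$); comparability then forces $k_1,k_2\neq 0$, and by the atomic decomposition \eqref{repr1} it suffices to prove, for atoms $f_{k_i}^{j_i}$ supported in $D_{k_i}^{j_i}$, that
\begin{equation*}
  (1+|n_k|)\sum_{j\in\Z_+}2^{-j/2}\beta_{k,j}\,\|\mathbf{1}_{D_k^j}\cdot(f_{k_1}^{j_1}\ast f_{k_2}^{j_2})\|_{L^2}\leq C\Lambda(k_1,k_2,k)(1+|n_k|)^{-\delta}\,\Pi',
\end{equation*}
with $\Pi'=2^{j_1/2}\beta_{k_1,j_1}\|f_{k_1}^{j_1}\|_{L^2}\cdot 2^{j_2/2}\beta_{k_2,j_2}\|f_{k_2}^{j_2}\|_{L^2}$. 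The decisive structural point is that on the support of $f_{k_1}^{j_1}\ast f_{k_2}^{j_2}$ all three frequencies have size $\approx|n_k|$ with $\min(|\xi_1|,|\xi_2|,|\xi_1+\xi_2|)\approx|n_k|$; hence by \eqref{om20} the product $\mathbf{1}_{D_k^j}\cdot(f_{k_1}^{j_1}\ast f_{k_2}^{j_2})$ vanishes unless $2^{\max(j_1,j_2,j)}\geq C^{-1}|n_k|^{1+\alpha}$, and when $j$ is the strict maximum in fact $2^j\approx|n_k|^{1+\alpha}$, i.e.\ exactly the range in which $\beta_{k,j}\approx 1$ and $2^{-j/2}\approx|n_k|^{-(1+\alpha)/2}$. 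This \emph{forced} high modulation, a consequence of $\alpha>1$, is what is unavailable in the Benjamin--Ono case and what ultimately makes the estimate close.

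For the convolution factor I would split according to the size of $A=\min(\,||n_{k_1}|-|n_{k_2}||,\,||n_k|-|n_{k_1}||,\,||n_k|-|n_{k_2}||\,)$. If $A>2^{50}(1+|n_k|)^{1/2}$, then --- using that each interval $I_{k_i}$ has width $\approx|n_k|^{1/2}$ and $|\omega'(\xi)-\omega'(\xi')|=(\alpha+1)\big||\xi|^{\alpha}-|\xi'|^{\alpha}\big|$ --- each of the three transversalities $d_\al(k_{i_1},k_{i_2};k_{i_3})$ obtained from the three slots of $J(\cdot,\cdot,\cdot)$ via the symmetries \eqref{ela1} is $\geq C^{-1}|n_k|^{\alpha-1}A$; Corollary~\ref{Lemmad2}(b) with $i_3$ chosen so that $j_{i_3}=\max(j_1,j_2,j)$ then supplies the gain $[\,2^{\max(j_1,j_2,j)}|n_k|^{\alpha-1}A\,]^{-1/2}$, which yields the factor $A^{-1/2}=\Lambda$ with a positive power of $|n_k|$ to spare. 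If $A\leq 2^{50}(1+|n_k|)^{1/2}$ (so $\Lambda=1$), the first-order transversality that matches the top modulation may be negligible, and one uses instead the curvature bound Corollary~\ref{Lemmad2}(c), or the trivial bound Corollary~\ref{Lemmad2}(a) (in which $\min(|U_{k_1}|,|U_{k_2}|,|U_{|k|}|)^{1/2}\approx|n_k|^{1/4}$) when the latter is more favorable.

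With these ingredients the bound reduces to a case analysis on which of $j_1,j_2,j$ is largest, as in \cite{IoKe}. In the representative case $j=\max(j_1,j_2,j)$ one has $2^j\approx|n_k|^{1+\alpha}$, $\beta_{k,j}\approx 1$, and Corollary~\ref{Lemmad2}(c) gives $\|\mathbf{1}_{D_k^j}\cdot(f_{k_1}^{j_1}\ast f_{k_2}^{j_2})\|_{L^2}\leq C2^{\min(j_1,j_2)/2+\max(j_1,j_2)/4}\,\Pi$, where $\Pi=\|f_{k_1}^{j_1}\|_{L^2}\|f_{k_2}^{j_2}\|_{L^2}$; multiplying by $(1+|n_k|)2^{-j/2}\beta_{k,j}\approx|n_k|^{(1-\alpha)/2}$ gives a bound of the form $|n_k|^{(1-\alpha)/2}2^{\min(j_1,j_2)/2+\max(j_1,j_2)/4}\Pi$, which is $\leq C|n_k|^{-\delta}\Pi'$ precisely because $(1-\alpha)/2+\delta\leq 0$ for $\delta=(\alpha-1)/100$. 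When $j_1$ (or $j_2$) is largest one instead uses $2^{j_1}\gtrsim|n_k|^{1+\alpha}$ together with the matching growth $\beta_{k_1,j_1}\approx(2^{j_1}/|n_k|^{1+\alpha})^{1/2-\delta}$ on the right, and sums the resulting (sub)geometric series in $j$ via \eqref{def1'}, absorbing the $O(\log(1+|n_k|))$ losses into the surplus $|n_k|^{-\delta}$; the remaining sub-cases (two or three of the $j$'s comparable) are similar and not worse. I expect this bookkeeping --- deciding, in each sub-case, which of the three $d_\al$'s or the curvature one may invoke, and checking that the powers of $|n_k|$ and of $2^j$ close --- to be the main obstacle.

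Finally, \eqref{hb1.3} is easier, since now all frequencies are bounded and no power of $|n_k|$ is at stake; the only delicate point is the strength of the $X_0^{-1/2+\delta}$ norm at low output frequency $2^{k'}$, $k'\to-\infty$. There the output $\xi=\xi_1+\xi_2$ lies near $0$, so by \eqref{om20} $|\Omega|\approx 2^{k'}$ is small, whence the output modulation satisfies $2^j\leq C(2^{\max(j_1,j_2)}+1)$, and Corollary~\ref{Lemmad2}(a) yields $\|\mathbf{1}_{D_{0,k'}^j}\cdot(f_{k_1}^{j_1}\ast f_{k_2}^{j_2})\|_{L^2}\leq C2^{k'/2}2^{\min(j_1,j_2,j)/2}\Pi$, the factor $2^{k'/2}$ coming from $|U_{k'-2}|\approx 2^{k'}$. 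Since $2^{k'/2}$ defeats the weight $2^{-k'(1/2-\delta)}$, the sum $\sum_{k'\leq 2}2^{\delta k'}$ converges, and the sum in $j$ converges because $(\tau-\omega(\xi)+i)^{-1}$ contributes $2^{-j\delta}$ against the weight $2^{j(1-\delta)}$; comparing with $\|f_{k_i}^{j_i}\|_{Z_{k_i}}\approx 2^{j_i(1-\delta)}\|f_{k_i}^{j_i}\|_{L^2}$ then closes the estimate. When $k_1$ or $k_2$ equals $0$, one repeats the argument on the $X_0$- and $Y_0$-components of $Z_0$ separately.
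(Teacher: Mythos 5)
Your proposal is correct and follows essentially the same route as the paper: atomic decomposition via \eqref{repr1}, the resonance constraint from \eqref{om20} forcing $2^{\max(j_1,j_2,j)}\gtrsim |n_k|^{\alpha+1}$, Corollary \ref{Lemmad2}(b) with $i_3$ the index of maximal modulation when $A>2^{50}(1+|n_k|)^{1/2}$, the curvature/trivial bounds (c)/(a) when $A$ is small, and a direct low-frequency argument (reducing $Z_0$-inputs to the $L^2$-based bound, as the paper does via \eqref{ro81}) for \eqref{hb1.3} and the bounded-$|n_k|$ case. The only slight imprecision is the claim that $j=\max$ forces $2^j\approx|n_k|^{1+\alpha}$ (this holds only when $j$ strictly dominates $j_1,j_2$; otherwise $2^j\approx 2^{\max(j_1,j_2)}$ can be much larger), but your summation scheme with the $\beta_{k,j}$ weights handles that regime anyway, exactly as in the paper.
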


\begin{proof} [Proof of Lemma \ref{Lemmak3}] We analyze two cases:
  $|n_k|\geq 2^{100}$ and $|n_k|\leq 2^{100}$.

  {\bf{Case 1:}} $|n_k|\geq 2^{100}$. In view of the hypothesis,
  $|n_{k_i}|\geq 2^{70}$, $i=1,2$. Using \eqref{repr1}, we may assume
  $f_{k_1}=f_{k_1}^{j_1}$ is supported in $D_{k_1}^{j_1}$ and
  $f_{k_2}=f_{k_2}^{j_2}$ is supported in $D_{k_2}^{j_2}$. For
  \eqref{hb1} it suffices to prove that
  \begin{equation}\label{hb2}
    \begin{split}
      &|n_k|\cdot \sum_{j\in\Z_+}2^{-j/2}\beta_{k,j}\|\mathbf{1}_{D_k^j}\cdot (f_{k_1}^{j_1}\ast f_{k_2}^{j_2})\|_{L^2}\\
      &\leq C\Lambda(k_1,k_2,k)|n_k|^{-\delta}\cdot
      2^{j_1/2}\beta_{k_1,j_1}\|f_{k_1}^{j_1}\|_{L^2}\cdot
      2^{j_2/2}\beta_{k_2,j_2}\|f_{k_2}^{j_2}\|_{L^2}.
    \end{split}
  \end{equation}
  In view of \eqref{om20}, $\mathbf{1}_{D_k^j}\cdot (f_{k_1}^{j_1}\ast
  f_{k_2}^{j_2})\equiv 0$ unless
  \begin{equation}\label{hb3}
    2^{\max(j_1,j_2,j)}\geq  C^{-1}|n_k|^{\alpha+1}\text{ and }\beta_{k,j}\leq C\max(\beta_{k_1,j_1},\beta_{k_2,j_2}).
  \end{equation}
  Using \eqref{on3},
  \begin{equation*}
    \|\mathbf{1}_{D_k^j}\cdot (f_{k_1}^{j_1}\ast f_{k_2}^{j_2})\|_{L^2}\leq 2^{(j_1+j_2+j)/2}2^{-\max(j_1,j_2,j)/2}\cdot \Pi,
  \end{equation*}
  where $\Pi=\|f_{k_1}^{j_1}\|_{L^2}\cdot \|f_{k_2}^{j_2}\|_{L^2}$.
  Thus, using \eqref{hb3},
  \begin{equation}\label{hb4}
    \begin{split}
      &|n_k|\cdot \sum_{j\in\Z_+}2^{-j/2}\beta_{k,j}\|\mathbf{1}_{D_k^j}\cdot (f_{k_1}^{j_1}\ast f_{k_2}^{j_2})\|_{L^2}\\
      &\leq C\ln(2+|n_k| )|n_k|^{-(\alpha-1)/2}\cdot
      2^{j_1/2}\beta_{k_1,j_1}\|f_{k_1}^{j_1}\|_{L^2}\cdot
      2^{j_2/2}\beta_{k_2,j_2}\|f_{k_2}^{j_2}\|_{L^2}.
    \end{split}
  \end{equation}

  It remains to prove the bound \eqref{hb2} in the case $A>
  2^{50}(1+|n_k| )^{1/2}$. In this case, using \eqref{on32},
  \begin{equation*}
    \|\mathbf{1}_{D_k^j}\cdot (f_{k_1}^{j_1}\ast f_{k_2}^{j_2})\|_{L^2}\leq 2^{(j_1+j_2+j)/2}[2^{\max(j_1,j_2,j)}A|n_k|^{\al-1}]^{-1/2}\cdot \Pi.
  \end{equation*}
  The bound \eqref{hb2} follows from \eqref{hb3}.

  {\bf{Case 2:}} $|n_k|\leq 2^{100}$. Since $|n_{k_i}|\leq C$,
  $i=1,2$, for \eqref{hb1} we have to prove that
  \begin{equation}\label{hq1}
    \begin{split}
      \|\chi_k(\xi)(\tau-\omega(\xi)+i)^{-1}\cdot (f_{k_1}\ast
      f_{k_2})\|_{Z_k}\leq
      C\|f_{k_1}\|_{Z_{k_1}}\|f_{k_2}\|_{Z_{k_2}}.
    \end{split}
  \end{equation}
  It follows from the definitions and \eqref{ro81} that
  \begin{equation}\label{hq4}
    \sum_{j\in\Z}2^{j(1-\delta)}\|\eta_j(\tau-\omega(\xi))f_{k_l}\|_{L^2}\leq C\|f_{k_l}\|_{Z_{k_l}},
  \end{equation}
  for $l=1,2$, since $|n_{k_l}|\leq C$. The bound \eqref{hq1} follows
  easily using \eqref{on31}. The bound \eqref{hb1.3} also follows from
  \eqref{hq4} and \eqref{on31}. This completes the proof of the lemma.
\end{proof}

\section{Multiplication by smooth bounded functions}\label{mult}

In this section we consider operators on $Z_k$ given by convolutions
with Fourier transforms of certain smooth bounded functions. As in
\cite{IoKe}, for integers $N\geq 100$ we define the space of
{\it{admissible factors}}
\begin{equation}\label{ar3}
  \begin{split}
    S^\infty_N=\{&m:\mathbb{R}^2\to\mathbb{C}: m\text{ supported in }\R\times[-10,10]\text{ and }\\
    &\|m\|_{S^\infty_N}=\sum_{\sigma_1=0}^N\|\partial_t^{\sigma_1}m\|_{L^\infty}+\sum_{\sigma_1=0}^N\sum_{\sigma_2=1}^N\|\partial_t^{\sigma_1}\partial_x^{\sigma_2}m\|_{L^2}<\infty\}.
  \end{split}
\end{equation}
Notice that bounded functions such as $\eta_0(t)e^{iq\Psi}$,
$q\in\mathbb{R}$, $\Psi $ as in \eqref{fg9}, are in $S^\infty_N$. We
also define the space of {\it{restricted admissible factors}}
\begin{equation}\label{ar3'}
  \begin{split}
    S^2_N=\{&m:\mathbb{R}^2\to\mathbb{C}: m\text{ supported in }\R\times[-4,4]\text{ and }\\
    &\|m\|_{S^2_N}=\sum_{\sigma_1=0}^N\sum_{\sigma_2=0}^N\|\partial_t^{\sigma_1}\partial_x^{\sigma_2}m\|_{L^2}<\infty\}.
  \end{split}
\end{equation}
It is easy to see that bounded functions such as
$\eta_0(t)\phi_{\mathrm{low}}e^{iq\Psi}$, $q\in\mathbb{R}$ (with the
notation in section \ref{construct}) are in $S^2_N$. Using the Sobolev
embedding theorem, it is easy to verify the following properties:
\begin{equation}\label{ar3''}
  \begin{cases}
    &S^2_N\subseteq S^\infty_{N-10};\\
    &S^\infty_N\cdot S^\infty_N\subseteq S^\infty_{N-10};\\
    &S^2_N\cdot S^\infty_N\subseteq S^2_{N-10};\\
    &\partial_x S^\infty_N\subseteq S^2_{N-10}.
  \end{cases}
\end{equation}
For $k\in\mathbb{Z}$ we define
\begin{equation}\label{ar1set}
  M^{\mathrm{high}}_{k}=\bigcup_{2^{j+20}\geq |n_k|^\alpha} J_j  \text{ and } M^{\mathrm{low}}_{k}=(M^{\mathrm{high}}_{k})^c,
\end{equation}
and
\begin{equation}\label{ar1}
  Z_k^{\mathrm{high}}=\big\{f_k\in Z_k:f_k\text{ is supported in }
  \{\tau-\omega(\xi)\in M^{\mathrm{high}}_{k}\}\big\}.
\end{equation}
Clearly, $Z_k^{\mathrm{high}}=Z_k$ if $|n_k|^\al\leq 2^{20}$. The main
result in this section is the following proposition.

\begin{proposition}\label{Lemmab4} (a) Assume $k_1,k_2\in\mathbb{Z}$, $f_{k_1}^{\mathrm{high}}\in Z_{k_1}^{\mathrm{high}}$,
  $\epsilon\in\{-1,0\}$, and $m\in S^\infty_{100}$. Then
  \begin{equation}\label{ar4}
    \begin{split}
      \Big\|\chi_{k_2}(\xi_2)&(\tau_2-\omega(\xi_2)+i)^\epsilon
      \cdot(f_{k_1}^{\mathrm{high}}\ast\mathcal{F}(m))
      \Big\|_{{Z}_{k_2}}\\
      &\leq C
      (1+|k_1-k_2|)^{-60}\ln(2+|n_{k_1}|)\|m\|_{S^\infty_{100}}\cdot\|(\tau_1-\omega(\xi_1)+i)^\epsilon\cdot
      f_{k_1}^{\mathrm{high}}\|_{Z_{k_1}}.
    \end{split}
  \end{equation}

  (b) Assume $k_1,k_2\in\mathbb{Z}$, $k_1\neq 0$, $f_{k_1}\in
  Z_{k_1}$, $\epsilon\in\{-1,0\}$, and $m'\in S^2_{100}$. Then
  \begin{equation}\label{ar4.1}
    \begin{split}
      \Big\|\chi_{k_2}(\xi_2)&(\tau_2-\omega(\xi_2)+i)^\epsilon
      \cdot(f_{k_1}\ast\mathcal{F}(m'))
      \Big\|_{{Z}_{k_2}}\\
      &\leq C
      (1+|k_1-k_2|)^{-60}\ln(2+|n_{k_1}|)\|m'\|_{S^2_{100}}\cdot\|(\tau_1-\omega(\xi_1)+i)^\epsilon\cdot
      f_{k_1}\|_{Z_{k_1}}.
    \end{split}
  \end{equation}
  In addition, if $f_0\in X_0^0$ (see definition \eqref{ro80}) then
  \begin{equation}\label{ar4.2}
    \begin{split}
      \Big\|\chi_{k_2}(\xi_2)&(\tau_2-\omega(\xi_2)+i)^\epsilon
      \cdot(f_{0}\ast\mathcal{F}(m'))
      \Big\|_{{Z}_{k_2}}\\
      &\leq C
      (1+|k_2|)^{-60}\|m'\|_{S^2_{100}}\cdot\|(\tau_1-\omega(\xi_1)+i)^\epsilon\cdot
      f_0\|_{X_0^0}.
    \end{split}
  \end{equation}
\end{proposition}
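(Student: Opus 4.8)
The plan is to realize the convolution with $\mathcal{F}(m)$ as multiplication by the smooth bounded function $m$ on the physical side, and to estimate the resulting operator between the frequency/modulation‑localized spaces by reducing its localized pieces to the $L^2$ bilinear bounds of Corollary \ref{Lemmad2}, with $\mathcal{F}(m)$ (respectively $\mathcal{F}(m')$, or $f_0$) playing the role of one of the two convolution factors. I would first dispose of the two values of $\epsilon$ simultaneously: on the slab $\{\tau-\omega(\xi)\in J_j\}$ the weight $(\tau-\omega(\xi)+i)^\epsilon$ is comparable to $2^{\epsilon j}$, so \eqref{ar4}--\eqref{ar4.2} all follow from a single estimate in which the dyadic weights $2^{j/2}\beta_{k,j}$ defining $Z_k$ are replaced by $2^{(\epsilon+1/2)j}\beta_{k,j}$ on both sides; the case $\epsilon=-1$ is the more comfortable one, since it penalizes the regime in which modulation is created by the multiplication. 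Using the atomic decompositions \eqref{repr1}, \eqref{repr2} I may also assume $f_{k_1}=f_{k_1}^{j_1}$ is supported in $D_{k_1}^{j_1}$ (respectively $f_0=f_{0,k'_1}^{j_1}$ in $D_{0,k'_1}^{j_1}$), and I decompose the \emph{output} by modulation, reducing everything to bounds for $\|\mathbf{1}_{\{\xi\in I_{k_2},\,\tau-\omega(\xi)\in J_j\}}\cdot(f_{k_1}^{j_1}\ast\mathcal{F}(m))\|_{L^2}$ for each pair $(j,j_1)$.

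The analysis is governed by the identity
\[
\tau-\omega(\xi)=\big(\tau_1-\omega(\xi_1)\big)+\big(\tau'-(\omega(\xi)-\omega(\xi_1))\big),\qquad \xi'=\xi-\xi_1,
\]
together with $|\omega(\xi)-\omega(\xi_1)|\lesssim |\xi'|(1+|n_{k_1}|)^\alpha$ (valid for $|\xi'|\lesssim 1+|n_{k_1}|$; larger $\xi'$ is crushed by the rapid $x$‑decay of $\mathcal{F}(m)$ below). Hence the output modulation $2^j$ is at most of order $\max\big(2^{j_1},\,|\tau'|,\,|\xi'|(1+|n_{k_1}|)^\alpha\big)$, and whenever $2^j$ exceeds $2^{j_1}$ the excess is carried either by the $t$‑frequency of $m$ — with $|\tau'|\gtrsim 2^j$, in which case $\|\partial_t^{100}m\|\lesssim\|m\|_{S^\infty_{100}}$ contributes a factor $2^{-100 j}$ that annihilates everything in sight — or by the dispersion through $|\xi'|(1+|n_{k_1}|)^\alpha\gtrsim 2^j$. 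In the dispersive case two gains appear: first, $|\xi'|$ is bounded below, so the decay of $\mathcal{F}(m)$ in $\xi'$ coming from $\|\partial_x^{\sigma_2}m\|$, $1\le\sigma_2\le 100$, is available; second, after fixing the output point, the relation $\omega(\xi_1)=\tau-\tau'-(J_{j_1})$ pins $\xi_1$ to an interval of length $\lesssim\max(2^{j_1},|\tau'|)(1+|n_{k_1}|)^{-\alpha}$, whose measure produces exactly the square‑root gain delivered by Corollary \ref{Lemmad2}(b). The separation gains $(1+|k_1-k_2|)^{-60}$ and $(1+|k_2|)^{-60}$ in \eqref{ar4.2} come from the same mechanism: $\xi_1\in I_{k_1}$, $\xi\in I_{k_2}$ forces $|\xi'|\gtrsim\mathrm{dist}(I_{k_1},I_{k_2})$, which grows with $|k_1-k_2|$, and one plays this either against the $\xi'$‑decay of $\mathcal{F}(m)$ (for $m'\in S^2_{100}$, whose transform is genuinely $L^2$) or against the forced growth of the output modulation (for $m\in S^\infty_{100}$).

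In the complementary low‑output‑modulation range $2^j\lesssim\max(2^{j_1},1)$ there is essentially nothing to create, and I would simply use Hölder on the physical side, $\|(\mathcal{F}^{-1}f_{k_1}^{j_1})\cdot m\|_{L^2_{x,t}}\le\|m\|_{L^\infty}\|\mathcal{F}^{-1}f_{k_1}^{j_1}\|_{L^2}$ (respectively $\|m'\|_{L^2}$‑type control), combined with the $x$‑ and $t$‑frequency decay of $m$ to localize the output in $k_2$. Summing over $j$, $j_1$ and over the $x$‑ and $t$‑frequency scales of $m$ then reduces to elementary geometric series in powers of $2$; these converge, the one mildly lossy step yielding the factor $\ln(2+|n_{k_1}|)$. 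In part (b) the extra $L^2$‑in‑$x$ control of $m'$ removes the only genuinely singular feature and no restriction on $f_{k_1}$ is needed; moreover the low‑frequency input estimate \eqref{ar4.2} is the easy case, since $1+|n_{k_1}|\lesssim 1$ trivializes the modulation bookkeeping and the output may be placed in $X_0\hookrightarrow Z_0$ when $k_2=0$.

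\textbf{The main obstacle} is precisely the point where (a) and (b) diverge. For $m\in S^\infty_{100}$ the transform $\mathcal{F}(m)$ need not be square‑integrable near $\xi'=0$ — a merely bounded function of $x$ may have, e.g., a principal‑value or a $|\xi'|^{\rho-1}$ ($\rho>0$) singularity in its spatial Fourier transform — so the naive splitting of $\mathcal{F}(m)$ into dyadic $\xi'$‑shells and summing $L^1$‑masses diverges; multiplication by such an $m$ is bounded on $L^2_x$ only by cancellation, and it generates output modulation as large as $\sim(1+|n_{k_1}|)^\alpha$ through the dispersion. Handling this forces one to keep the $L^\infty_x$ bound of $m$ intact rather than decomposing it, and to exploit that the generated modulation is harmless only once $2^{j_1}\gtrsim(1+|n_{k_1}|)^\alpha$, i.e.\ on $Z_{k_1}^{\mathrm{high}}$; making this quantitative while simultaneously extracting the polynomial $|k_1-k_2|$‑decay and no worse than a logarithmic loss in $|n_{k_1}|$ is the technical heart of the proof.
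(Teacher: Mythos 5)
Your treatment of the high-frequency cases ($|k_1|,|k_2|\geq 1$) is essentially the paper's argument: dyadic decomposition of $m$ in space-time frequency, the resonance identity \eqref{om20} to constrain which output modulations can occur, Plancherel with the $L^\infty$ bound on the low spatial-frequency part of $m$, Corollary \ref{Lemmad2}(b) (the $2^{j_2/2}|n_{k_1}|^{-\alpha/2}$ gain against the $L^2$ mass of the multiplier piece) in the dispersive regime, the hypothesis $f_{k_1}^{\mathrm{high}}\in Z_{k_1}^{\mathrm{high}}$ to keep the created modulation comparable to $2^{j_1}$, and spatial-frequency separation for the $(1+|k_1-k_2|)^{-60}$ factor. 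Two remarks, the second of which is a genuine gap. First, a directional slip: for $\epsilon=-1$ the dangerous regime is not modulation creation but modulation \emph{lowering} ($j_2\ll j_1$), since the output weight $2^{-j_2/2}\beta_{k_2,j_2}$ then dwarfs the input weight; your "comfortable" low-output-modulation range cannot be closed by Hölder alone and needs exactly the bilinear gain of Corollary \ref{Lemmad2}(b) applied with the roles reversed (this is the part of Step 3 of the paper's proof singled out as the main difficulty). Your pinning mechanism does supply the needed gain, but your write-up only deploys it when $2^{j}>2^{j_1}$.

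The genuine gap is the low-frequency sector, i.e.\ every case in which $k_1=0$ or $k_2=0$. When $k_2=0$ the target norm is $Z_0=X_0+Y_0$, and your closing claim that "the output may be placed in $X_0\hookrightarrow Z_0$" is false: an $X_0$ (or $X_0^0$) atom at spatial frequency $2^{k'}$ can be shifted by the multiplier's spatial frequencies near $-\xi_1$ to arbitrarily small output frequencies $2^{k'''}\ll 2^{k'}$, and the $X_0$ weight $2^{-k'''}$ cannot be recovered — the pointwise/window bound only gains $2^{k'''/2}$, so $\sum_{k'''}2^{-k'''}\cdot 2^{k'''/2}$ diverges. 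This is precisely why the paper defines $Z_0$ with the $Y_0$ summand \eqref{def1c} and proves \eqref{ar4.2} and the $k_2=0$ cases of \eqref{ar4}, \eqref{ar4.1} by placing the output in $Y_0$, using H\"older in $x$ with $\|m'_{\leq k'',j''}\|_{L^2_xL^\infty_t}$ (Steps 1, 2 and 6 of its proof); the paper even states that this case is the main reason for the definition \eqref{def1d}. Symmetrically, for part (a) with $k_1=0$ the input space is $Z_0^{\mathrm{high}}=Z_0$, so the input has a $Y_0$ component ($L^1_xL^2_t$ atoms) that your reduction to the $X_0$-type atoms \eqref{repr2} never covers; the paper handles this input separately, again through $Y_0$. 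Without an argument for these low-frequency configurations — in particular $k_1=k_2=0$ in (a), where there is no separation factor to exploit and $m$ is merely bounded — the proposal does not prove the proposition as stated.
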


The rest of this section is concerned with the proof of Proposition
\ref{Lemmab4}. The proofs in this section are similar to the proofs in
\cite[Section 9]{IoKe}. We may assume
$\|m\|_{S^\infty_{100}}=\|m'\|_{S_{100}^2}=1$. For any
$j''\in\mathbb{Z}_+$ and $k''\in\mathbb{Z}$ let
\begin{equation}\label{ar12}
  \begin{split}
    &m_{k'',j''}=\mathcal{F}^{-1}\big[\eta_{j''}(\tau)\widetilde{\eta}_{k''}(\xi)\mathcal{F}(m)\big],\\
    &m'_{k'',j''}=\mathcal{F}^{-1}\big[\eta_{j''}(\tau)\widetilde{\eta}_{k''}(\xi)\mathcal{F}(m')\big].
  \end{split}
\end{equation}
Let $m_{\leq k'',j''}=\sum_{k'''\leq k''}m_{k''',j''}$ and $m'_{\leq
  k'',j''}=\sum_{k'''\leq k''}m'_{k''',j''}$. Using \eqref{ar3} and
\eqref{ar3'}, for any $j''\in\mathbb{Z}_+$ and $k''\in\mathbb{Z}$,
\begin{equation}\label{ar10}
  \begin{cases}
    &\|m_{\leq k'',j''}\|_{L^\infty_{x,t}}\leq C2^{-80j''};\\
    &2^{k''}\|m_{k'',j''}\|_{L^2_{x,t}}+\|m_{k'',j''}\|_{L^\infty_{x,t}}\leq
    C(1+2^{k''})^{-80}2^{-80j''},
  \end{cases}
\end{equation}
and
\begin{equation}\label{mi4}
  \|m'_{\leq k'',j''}\|_{L^2_{x,t}}+(1+2^{k''})^{-80}\|m'_{k'',j''}\|_{L^2_{x,t}}\leq C2^{-80j''}.
\end{equation}
We prove the proposition in several steps.

{\bf{Step 1: proof of \eqref{ar4} in the case $k_1=k_2=0$}}. The
estimate \eqref{ar4} in this case is the main reason for defining the
space $Z_0$ as in \eqref{def1d}, instead of, for example,
$Z_0=X_0^{-1/2+\delta}$. Using the definition \eqref{def1d} it is easy
to see that
\begin{equation}\label{yu4}
  \|(\tau-\omega(\xi)+i)^\epsilon h\|_{Z_0}\approx\|(\tau+i)^\epsilon h\|_{Z_0}.
\end{equation}
Therefore, we have to prove that
\begin{equation}\label{ar4.9}
  \Big\|\chi_{0}(\xi_2)(\tau_2+i)^\epsilon
  \cdot(f_0\ast\mathcal{F}(m))
  \Big\|_{{Z}_{0}}\leq C\|(\tau_1+i)^\epsilon\cdot f_0\|_{Z_{0}}
\end{equation}
for any $f_0\in Z_0$, $\epsilon\in\{0,-1\}$.

Assume first that $(\tau_1+i)^\epsilon f_0\in X_0=X_0^{-1}$. Using the
representation \eqref{repr2}, we may assume that $f_0=f^{j_1}_{0,k'}$
is an $L^2$ function supported in $D_{0,k'}^{j_1}$, $k'\leq 2$,
$j_1\geq 0$,
\begin{equation*}
  \|(\tau_1+i)^\epsilon f_0\|_{X_0}\approx 2^{\epsilon
    j_1}2^{j_1(1-\delta)}2^{-k'}\|f_{0,k'}^{j_1}\|_{L^2}.
\end{equation*}
We decompose
\begin{equation*}
  m=\sum_{j''=0}^\infty m_{\leq k'-10,j''}+\sum_{k''=k'-9}^\infty\sum_{j''=0}^{\infty}m_{k'',j''}.
\end{equation*}
For \eqref{ar4.9} it suffices to prove that
\begin{equation}\label{yu1}
  \begin{split}
    &\sum_{j''=0}^\infty\Big\|\chi_{0}(\xi_2)(\tau_2+i)^\epsilon\cdot(f_{0,k'}^{j_1}\ast\mathcal{F}(m_{\leq k'-10,j''}))\Big\|_{X_0}\\
    +&\sum_{k''=k'-9}^{10}\sum_{j''=0}^{\infty}\Big\|\chi_{0}(\xi_2)(\tau_2+i)^\epsilon\cdot(f_{0,k'}^{j_1}\ast\mathcal{F}(m_{k'',j''}))\Big\|_{Y_0}\\
    \leq &C2^{\epsilon
      j_1}2^{j_1(1-\delta)}2^{-k'}\|f_{0,k'}^{j_1}\|_{L^2}
  \end{split}
\end{equation}
Using the definition \eqref{def1''}, the first sum in the left-hand is
dominated by
\begin{equation*}
  C\sum_{j''=0}^\infty\sum_{j_2=0}^\infty 2^{\epsilon j_2}2^{j_2(1-\delta)}2^{-k'}\|\eta_{j_2}(\tau_2)\cdot(f_{0,k'}^{j_1}\ast\mathcal{F}(m_{\leq k'-10,j''}))\|_{L^2} 
\end{equation*}
We observe that
$\eta_{j_2}(\tau_2)\cdot(f_{0,k'}^{j_1}\ast\mathcal{F}(m_{\leq
  k'-10,j''}))\equiv 0$ unless
\begin{equation}\label{yu2}
  (j_2,j'')\in L^C_{j_1}=\{(j_2,j'')\in\Z_+\times\Z_+:\,|j_1-j_2|\leq C\text{ or }j_1,j_2\leq j''+C\}
\end{equation}
for some constant $C$. Thus, using \eqref{ar10} and Plancherel
theorem, the expression above is dominated by
\begin{equation*}
  \sum_{(j_2,j'')\in L^C_{j_1}}2^{\epsilon j_2}2^{j_2(1-\delta)}2^{-k'}\|f_{0,k'}^{j_1}\|_{L^2}\|m_{\leq k'-10,j''}\|_{L^\infty}\leq C2^{\epsilon
    j_1}2^{j_1(1-\delta)}2^{-k'}\|f_{0,k'}^{j_1}\|_{L^2},
\end{equation*}
as desired. Similarly, the second sum in the left-hand side of
\eqref{yu1} is dominated by
\begin{align*}
  &C\sum_{k''=k'-9}^{10}\sum_{(j_2,j'')\in L^C_{j_1}}2^{\epsilon j_2}2^{j_2(1-\delta)}\|\mathcal{F}^{-1}(f_{0,k'}^{j_1})\cdot m_{k'',j''}))\|_{L^1_xL^2_t}\\
  \leq&
  C\|f_{0,k'}^{j_1}\|_{L^2}\sum_{k''=k'-9}^{10}\sum_{(j_2,j'')\in
    L^C_{j_1}}2^{\epsilon
    j_2}2^{j_2(1-\delta)}\|m_{k'',j''}\|_{L^2_xL^\infty_t}\\
  \leq& C2^{\epsilon
    j_1}2^{j_1(1-\delta)}2^{-k'}\|f_{0,k'}^{j_1}\|_{L^2},
\end{align*}
where in the last inequality we use
$\|m_{k'',j''}\|_{L^2_xL^\infty_t}\leq C2^{-k''}2^{-70j''}$, compare
with \eqref{ar10}.

It remains to prove \eqref{ar4.9} in the case $(\tau_1+i)^\epsilon
f_0\in Y_0$. Using the definition \eqref{def1c}, we may assume
$f_0=g_0^{j_1}$ is supported in $I_0\times J_{j_1}$ and
\begin{equation*}
  \|(\tau_1+i)^\epsilon f_0\|_{Y_0}\approx 2^{\epsilon j_1}2^{j_1(1-\delta)}\|\mathcal{F}^{-1}(g_0^{j_1})\|_{L^1_xL^2_t}.
\end{equation*}
For \eqref{ar4.9} it suffices to prove that
\begin{equation*}
  \sum_{j''\geq 0}\|\chi_0(\xi_2)(\tau_2+i)^\epsilon\cdot (g_0^{j_1}\ast\mathcal{F}(m_{\leq 10,j''})\|_{Y_0}\leq C2^{\epsilon j_1}2^{j_1(1-\delta)}\|\mathcal{F}^{-1}(g_0^{j_1})\|_{L^1_xL^2_t}.
\end{equation*}
Using Plancherel theorem and \eqref{ar10}, the left-hand side is
dominated by
\begin{equation*}
  C\sum_{(j_2,j'')\in L_{j_1}^C}2^{j_2(1-\delta)}2^{\epsilon j_2}\|\mathcal{F}^{-1}(g_0^{j_1})\cdot m_{\leq 10,j''}\|_{L^1_xL^2_t}\leq  C2^{\epsilon j_1}2^{j_1(1-\delta)}\|\mathcal{F}^{-1}(g_0^{j_1})\|_{L^1_xL^2_t},
\end{equation*}
as desired.  \medskip

{\bf{Step 2: proof of \eqref{ar4.2} in the case $k_2=0$}}. Using the
representation \eqref{repr2}, we may assume that $f_0=f^{j_1}_{0,k'}$
is an $L^2$ function supported in $D_{0,k'}^{j_1}$, $k'\leq 2$,
$j_1\geq 0$,
\begin{equation*}
  \|(\tau_1-\omega(\xi_1)+i)^\epsilon f_0\|_{X_0^0}\approx 2^{\epsilon
    j_1}2^{j_1(1-\delta)}\|f_{0,k'}^{j_1}\|_{L^2}.
\end{equation*}
In view of \eqref{yu4} it suffices to prove that
\begin{equation}\label{yu5}
  \sum_{j''=0}^\infty\|\chi_{0}(\xi_2)(\tau_2+i)^\epsilon\cdot(f_{0,k'}^{j_1}\ast\mathcal{F}(m'_{\leq 10,j''}))\|_{Y_0}\leq C2^{\epsilon j_1}2^{j_1(1-\delta)}\|f_{0,k'}^{j_1}\|_{L^2}.
\end{equation}
Using Plancherel theorem and \eqref{def1c}, the left-hand side of
\eqref{yu5} is dominated by
\begin{equation*}
  C\|f_{0,k'}^{j_1}\|_{L^2}\sum_{(j_2,j'')\in L_{j_1}^C}2^{j_2(1-\delta)}2^{\epsilon j_2}\|m'_{\leq 10,j''}\|_{L^2_xL^\infty_t},
\end{equation*}
and the bound \eqref{yu5} follows since $\|m'_{\leq
  10,j''}\|_{L^2_xL^\infty_t}\leq C2^{-70j''}$, compare with
\eqref{mi4}.  \medskip

{\bf{Step 3: proof of \eqref{ar4} in the case
    $k_1,k_2\in\Z\setminus\{0\}$, $|k_1-k_2|\leq 10$.}} In view of the
definition of $Z_{k_1}^{\mathrm{high}}$ and \eqref{repr1}, we may
assume that $f_{k_1}^{\mathrm{high}}=f_{k_1}^{j_1}$ is an $L^2$
function supported in $D_{k_1}^{j_1}$, $2^{j_1+20}\geq
|n_{k_1}|^\alpha$, $\|(\tau_1-\omega(\xi_1)+i)^\epsilon
f_{k_1}^\mathrm{high}\|_{Z_{k_1}}\approx 2^{\epsilon
  j_1}2^{j_1/2}\beta_{k_1,j_1}\|f_{k_1}^{j_1}\|_{L^2}$. We write
\begin{equation}\label{ar20}
  m=\sum_{j''=0}^\infty m_{\leq -100,j''}+
  \sum_{k''=-99}^\infty\sum_{j''=0}^\infty m_{k'',j''}.
\end{equation}
For \eqref{ar4} it suffices to prove that for $\epsilon\in\{-1,0\}$
\begin{equation}\label{ar21}
  \begin{split}
    &\sum_{j''\geq
      0}\big\|\chi_{k_2}(\xi_2)(\tau_2-\omega(\xi_2)+i)^\epsilon
    \cdot[f_{k_1}^{j_1}\ast\mathcal{F}(m_{\leq
      -100,j''})](\xi_2,\tau_2)\big\|_{Z_{k_2}}\\
    &+\sum_{k''\geq -99}\sum_{j''\geq
      0}\big\|\chi_{k_2}(\xi_2)(\tau_2-\omega(\xi_2)+i)^\epsilon\cdot
    [f_{k_1}^{j_1}\ast\mathcal{F}(m_{k'',j''})](\xi_2,\tau_2)\big\|_{Z_{k_2}}\\
    &\leq C\ln(2+|n_{k_1}| )2^{\epsilon
      j_1}2^{j_1/2}\beta_{k_1,j_1}\|f_{k_1}^{j_1}\|_{L^2}.
  \end{split}
\end{equation}

To bound the first sum in \eqref{ar21}, we use \eqref{om20} and
$2^{j_1+20}\geq |n_{k_1}|^\alpha$ to conclude that
$\mathbf{1}_{D_{k_2}^{j_2}}\cdot[f_{k_1}^{j_1}\ast\mathcal{F}(m_{\leq
  -100,j''})](\xi_2,\tau_2)\equiv 0$ unless $(j_2,j'')\in L_{j_1}^C$,
see definition \eqref{yu2}. Using Plancherel theorem and \eqref{ar10},
\begin{equation*}
  \big|\big|f_{k_1}^{j_1}\ast\mathcal{F}(m_{\leq -100,j''})
  \big|\big|_{L^2_{\xi_2,\tau_2}}\leq
  C2^{-80j''}\|f_{k_1}^{j_1}\|_{L^2}.
\end{equation*}
Thus the first sum in \eqref{ar21} is dominated by
\begin{equation*}
  C\sum_{(j_2,j'')\in L_{j_1}^C}2^{\epsilon j_2}2^{j_2/2}
  \beta_{k_2,j_2}2^{-80j''}\|f_{k_1}^{j_1}\|_{L^2},
\end{equation*}
which suffices (recall that $|k_1-k_2|\leq 10$).

To bound the second sum in \eqref{ar21} assume first that
$\epsilon=0$. As before, we use \eqref{om20} to conclude that
$\mathbf{1}_{D_{k_2}^{j_2}}\cdot[f_{k_1}^{j_1}\ast\mathcal{F}(m_{k'',j''})]
(\xi_2,\tau_2)\equiv 0$ unless
\begin{equation}\label{ar26}
  |j_1-j_2|\leq 4\,\text{ or }\,j_1,j_2\leq \log_2( |n_{k_1}|^\alpha)+k''+j''+C.
\end{equation}
Using Plancherel theorem and \eqref{ar10},
\begin{equation}\label{ng2}
  \big|\big|f_{k_1}^{j_1}\ast\mathcal{F}(m_{k'',j''})\big|\big|_{L^2_{\xi_2,\tau_2}}\leq C2^{-80k''}2^{-80j''}\|f_{k_1}^{j_1}\|_{L^2}.
\end{equation}
Thus, using $j_1+C\geq\log_2( |n_{k_1}|^\alpha)$, the second sum in
\eqref{ar21} is dominated by
\begin{align*}
  &C\sum_{k''\geq -99}\sum_{j''\geq 0}2^{-80k''}2^{-80j''}\|f_{k_1}^{j_1}\|_{L^2}\sum_{j_2\leq j_1+k''+j''+C}2^{j_2/2}\beta_{k_2,j_2}\\
  \leq & C2^{j_1/2}\beta_{k_1,j_1}\|f_{k_1}^{j_1}\|_{L^2}.
\end{align*}

We bound now the second sum in \eqref{ar21} when $\epsilon=-1$.  The
main difficulty is the presence of the indices $j_2\ll j_1$.  In fact,
for indices $j_2\geq j_1-10$, the argument above applies since the
left-hand side is multiplied by $2^{-j_2}$ and the right-hand side is
multiplied by $2^{-j_1}$. In view of \eqref{ar26}, it suffices to
prove that
\begin{equation}\label{ng1}
  \begin{split}
    &\sum_{k''+j''\geq j_1-\log_2( |n_{k_1}|^\alpha)-C}\sum_{j_2\leq
      j_1-10}2^{-j_2/2}\beta_{k_2,j_2}\big|\big|\mathbf{1}_{D_{k_2}^{j_2}}\cdot
    [f_{k_1}^{j_1}\ast\mathcal{F}(m_{k'',j''})]
    \big|\big|_{L^2}\\
    &\leq C\ln(2+|n_{k_1}|
    )2^{-j_1/2}\beta_{k_1,j_1}\|f_{k_1}^{j_1}\|_{L^2}.
  \end{split}
\end{equation}
If $j_1\geq \log_2( |n_{k_1}|^{\alpha+1})-C$ the bound \eqref{ng1}
follows easily from \eqref{ng2}. Assuming $j_1\leq \log_2(
|n_{k_1}|^{\alpha+1})-C$, the sum over $k''\geq \log_2( |n_{k_1}|)-C$
in \eqref{ng1} is bounded easily using again \eqref{ng2}. If $k'' \leq
\log_2( |n_{k_1}|)-C$ then, using Corollary \ref{Lemmad2} (b)
\begin{equation*}
  \big|\big|\mathbf{1}_{D_{k_2}^{j_2}}\cdot [f_{k_1}^{j_1}\ast\mathcal{F}(m_{k'',j''})]
  \big|\big|_{L^2}\leq C2^{10(j''+k'')}2^{j_2/2}|n_{k_1}|^{-\alpha/2}\|m_{k'',j''}\|_{L^2}\|f_{k_1}^{j_1}\|_{L^2}.
\end{equation*}
The bound \eqref{ng1} follows using \eqref{ar10}.  \medskip

{\bf{Step 4: proof of \eqref{ar4.1} in the case $k_2\neq 0$,
    $|k_1-k_2|\leq 10$.}} In view of \eqref{repr1}, we may assume that
$f_{k_1}=f_{k_1}^{j_1}$ is an $L^2$ function supported in
$D_{k_1}^{j_1}$, $\|(\tau_1-\omega(\xi_1)+i)^\epsilon
f_{k_1}\|_{Z_{k_1}}\approx 2^{\epsilon
  j_1}2^{j_1/2}\beta_{k_1,j_1}\|f_{k_1}^{j_1}\|_{L^2}$. In view of the
case analyzed earlier, we may assume that
\begin{equation*}
  2^{j_1}\leq |n_{k_1}|^\alpha.
\end{equation*}
For \eqref{ar4.1} it suffices to prove that for $\epsilon\in\{-1,0\}$
\begin{equation*}
  \begin{split}
    &\sum_{k''\in\mathbb{Z}}\sum_{j''\geq
      0}\big\|\chi_{k_2}(\xi_2)(\tau_2-\omega(\xi_2)+i)^\epsilon\cdot
    [f_{k_1}^{j_1}\ast\mathcal{F}(m'_{k'',j''})]\big\|_{Z_{k_2}}\\
    &\leq C\ln(2+|n_{k_1}| )2^{\epsilon
      j_1}2^{j_1/2}\beta_{k_1,j_1}\|f_{k_1}^{j_1}\|_{L^2}.
  \end{split}
\end{equation*}
Using the definition of the $Z_k$ spaces, this is equivalent to
proving that
\begin{equation}\label{mi21}
  \begin{split}
    &\sum_{k''\in\mathbb{Z}}\sum_{j''\geq 0}\sum_{j_2\geq 0}2^{\epsilon j_2}2^{j_2/2}\beta_{k_2,j_2}\|\mathbf{1}_{D_{k_2}^{j_2}}\cdot [f_{k_1}^{j_1}\ast|\mathcal{F}(m'_{k'',j''})|]\|_{L^2}\\
    &\leq C\ln(2+|n_{k_1}| )2^{\epsilon
      j_1}2^{j_1/2}\beta_{k_1,j_1}\|f_{k_1}^{j_1}\|_{L^2}.
  \end{split}
\end{equation}
This follows using the bound \eqref{on31} for
$2^{k''}|n_{k_1}|^\alpha\leq 1$ and $2^{k''}\ge |n_{k_1}|/100$, and
the bound \eqref{on32} for
$2^{k''}\in[|n_{k_1}|^{-\alpha},|n_{k_1}|/100]$.  \medskip

{\bf{Step 5: proof of \eqref{ar4} and \eqref{ar4.1} in the case
    $k_1,k_2\in\Z\setminus\{0\}$, $|k_1-k_2|\geq 10$.}} Clearly, it
suffices to prove the stronger bound \eqref{ar4.1}.  In view of
\eqref{repr1}, we may assume that $f_{k_1}=f_{k_1}^{j_1}$ is an $L^2$
function supported in $D_{k_1}^{j_1}$,
$\|(\tau_1-\omega(\xi_1)+i)^\epsilon f_{k_1}\|_{Z_{k_1}}\approx
2^{\epsilon j_1}2^{j_1/2}\beta_{k_1,j_1}\|f_{k_1}^{j_1}\|_{L^2}$. It
suffices to prove that
\begin{equation*}
  \begin{split}
    &\sum_{2^{k''}\geq(|n_{k_1}|+|n_{k_2}|)^{1/2}}\sum_{j''\geq
      0}\big\|\chi_{k_2}(\xi_2)(\tau_2-\omega(\xi_2)+i)^\epsilon\cdot
    [f_{k_1}^{j_1}\ast\mathcal{F}(m'_{k'',j''})]\big\|_{Z_{k_2}}\\
    &\leq C|k_1-k_2|^{-60}2^{\epsilon
      j_1}2^{j_1/2}\beta_{k_1,j_1}\|f_{k_1}^{j_1}\|_{L^2}.
  \end{split}
\end{equation*}
Using \eqref{on31} and \eqref{om20}, it suffices to prove that
\begin{equation}\label{yu8}
  \begin{split}
    \sum_{2^{k''}\geq(|n_{k_1}|+|n_{k_2}|)^{1/2}}&\sum_{j_2,j''}2^{\epsilon j_2}2^{j_2/2}\beta_{k_2,j_2}\|f_{k_1}^{j_1}\|_{L^2}2^{10k''+10j''}\|m'_{k'',j''}\|_{L^2}\\
    &\leq C|k_1-k_2|^{-60}2^{\epsilon
      j_1}2^{j_1/2}\beta_{k_1,j_1}\|f_{k_1}^{j_1}\|_{L^2},
  \end{split}
\end{equation}
where the sum over $j_2$ and $j''$ is taken over the set
\begin{equation*}
  \{(j_2,j'')\in\Z_+\times\Z_+:|j_2-j_1|\leq C\text{ or }j_1,j_2\leq 10k''+10j''+C\}.
\end{equation*}
The bound \eqref{yu8} follows easily using \eqref{mi4}.  \medskip

{\bf{Step 6: proof of \eqref{ar4} and \eqref{ar4.1} in the case
    $k_2=0$, $k_1\neq 0$.}} In view of \eqref{yu4} and the discussion
in Steps 3, 4, and 5, it suffices to prove that
\begin{equation*}
  \|\chi_0(\xi_2/2^{10})(\tau_2+i)^\epsilon(f_{k_1}\ast\mathcal{F}(m'))\|_{Z_0}\leq C|k_1|^{-60}\|(\tau_1-\omega(\xi_1)+i)^{\epsilon}f_{k_1}\|_{Z_{k_1}}
\end{equation*}
for any $f_{k_1}\in Z_{k_1}$, $\epsilon\in\{-1,0\}$. In view of
\eqref{repr1}, we may assume that $f_{k_1}=f_{k_1}^{j_1}$ is an $L^2$
function supported in $D_{k_1}^{j_1}$,
$\|(\tau_1-\omega(\xi_1)+i)^\epsilon f_{k_1}\|_{Z_{k_1}}\approx
2^{\epsilon j_1}2^{j_1/2}\beta_{k_1,j_1}\|f_{k_1}^{j_1}\|_{L^2}$. It
suffices to prove that
\begin{equation}\label{yu60}
  \begin{split}
    \sum_{2^{k''+10}\geq |n_{k_1}|}\sum_{j''\geq 0}\sum_{j_2\geq 0}&2^{j_2(1-\delta)}2^{\epsilon j_2}\|\mathcal{F}^{-1}[\chi_0(\xi_2/2^{10})\eta_{j_2}(\tau_2)(f_{k_1}^{j_1}\ast\mathcal{F}(m'_{k'',j''}))]\|_{L^1_xL^2_t}\\
    &\leq C|k_1|^{-60}2^{\epsilon
      j_1}2^{j_1/2}\beta_{k_1,j_1}\|f_{k_1}^{j_1}\|_{L^2},
  \end{split}
\end{equation}
where the restriction $2^{k''+10}\geq |n_{k_1}|$ may be assumed due to
the support property of $f_{k_1}^{j_1}$. Using \eqref{mi4} and the
support properties,
\begin{equation*}
  \begin{split}
    \|\mathcal{F}^{-1}[\chi_0(\xi_2/2^{10})\eta_{j_2}(\tau_2)(f_{k_1}^{j_1}\ast\mathcal{F}(m'_{k'',j''}))]\|_{L^1_xL^2_t}&\leq C\|f_{k_1}^{j_1}\|_{L^2}\|m'_{k'',j''}\|_{L^2_xL^\infty_t}\\
    &\leq C2^{-70(k''+j'')}\|f_{k_1}^{j_1}\|_{L^2},
  \end{split}
\end{equation*}
and
$\|\mathcal{F}^{-1}[\chi_0(\xi_2/2^{10})\eta_{j_2}(\tau_2)(f_{k_1}^{j_1}\ast\mathcal{F}(m'_{k'',j''}))]\|_{L^1_xL^2_t}=0$
unless
\begin{equation*}
  |j_1-j_2|\leq 4\quad\text{ or }\quad j_1,j_2\leq j''+\log_2(|n_{k_1}|^\alpha)+C.
\end{equation*}
The bound \eqref{yu60} follows easily, using also
$2^{j_1/2}\beta_{k_1,j_1}\geq 2^{j_1(1-\delta)}|n_{k_1}|^{-1}$.
\medskip

{\bf{Step 7: proof of \eqref{ar4} and \eqref{ar4.2} in the case
    $k_2\neq 0$, $k_1=0$.}} In view of \eqref{ro81} and the discussion
in Steps 3, 4, and 5, it suffices to prove that
\begin{equation*}
  \|\chi_{k_2}(\xi_2)(\tau_2-\omega(\xi_2)+i)^\epsilon(f_0\ast\mathcal{F}(m'))\|_{Z_{k_2}}\leq C|k_2|^{-60}\|(\tau_1-\omega(\xi_1)+i)^{\epsilon}f_0\|_{X_0^0}
\end{equation*}
for any $f_0\in X_0^0$ supported in $\{(\xi_1,\tau_1):|\xi_1|\leq
2^{-20}\}$, $\epsilon\in\{-1,0\}$. In view of \eqref{repr2}, we may
assume that $f_{0}=f_{0,k'}^{j_1}$ is an $L^2$ function supported in
$D_{0,k'}^{j_1}$, $k'\leq -10$, $\|(\tau_1-\omega(\xi_1)+i)^\epsilon
f_{0}\|_{X_0^0}\approx 2^{\epsilon
  j_1}2^{j_1(1-\delta)}\|f_{0,k'}^{j_1}\|_{L^2}$. It suffices to prove
that
\begin{equation}\label{yu80}
  \begin{split}
    \sum_{2^{k''+10}\geq |n_{k_2}|}\sum_{j''\geq 0}\sum_{j_2\geq 0}&2^{j_2/2}\beta_{k_2,j_2}2^{\epsilon j_2}\|\mathbf{1}_{D_{k_2}^{j_2}}\cdot (f_{0,k'}^{j_1}\ast\mathcal{F}(m'_{k'',j''}))\|_{L^2}\\
    &\leq C|k_2|^{-60}2^{\epsilon
      j_1}2^{j_1(1-\delta)}\|f_{0,k'}^{j_1}\|_{L^2},
  \end{split}
\end{equation}
where the restriction $2^{k''+10}\geq |n_{k_2}|$ may be assumed due to
support properties. Using \eqref{mi4} and Plancherel theorem we have
\begin{equation*}
  \|\mathbf{1}_{D_{k_2}^{j_2}}\cdot (f_{0,k'}^{j_1}\ast\mathcal{F}(m'_{k'',j''}))\|_{L^2}\leq C2^{-70(k''+j'')}\|f_{0,k'}^{j_1}\|_{L^2}.
\end{equation*}
Using support properties we have $\|\mathbf{1}_{D_{k_2}^{j_2}}\cdot
(f_{0,k'}^{j_1}\ast\mathcal{F}(m'_{k'',j''}))\|_{L^2}=0$ unless
\begin{equation*}
  |j_1-j_2|\leq 4\quad\text{ or }\quad j_1,j_2\leq j''+\log_2(|n_{k_2}|^\alpha)+C.
\end{equation*} 
The bound \eqref{yu80} follows easily, using also
$2^{j_2/2}\beta_{k_2,j_2}\leq C2^{j_2(1-\delta)}$.

\section{The main technical lemma}\label{mainlemma}

In this section we combine the estimates in sections \ref{Dyadic2} and
\ref{mult} to prove our main global estimate. We define
\begin{equation}\label{ne1}
  \begin{split}
    &\|\cdot\|_{\widetilde F_k}=\|\cdot \|_{F_k} \text{ for } k\not=0
    \text{ and }
    \|\cdot \|_{\widetilde F_{0}}=\|\mathcal{F}(\cdot)\|_{X^0_0},\\
    &\|\cdot\|_{\widetilde N_k}=\|\cdot \|_{N_k} \text{ for } k\not=0
    \text{ and } \|\cdot \|_{\widetilde
      N_{0}}=\|(\tau-\omega(\xi)+i)^{-1}\mathcal{F}(\cdot)\|_{X^0_0},
  \end{split}
\end{equation}
see \eqref{ro80}. These norms are clearly controlled by $\|\cdot
\|_{F_k}$ and $\|\cdot\|_{N_k}$ respectively. Moreover,
\begin{equation*}
  \|\partial_x(\cdot)\|_{F_k}\leq C(1+|n_k|)\|\cdot\|_{\widetilde{F}_k},\qquad\|\partial_x(\cdot)\|_{N_k}\leq C(1+|n_k|)\|\cdot\|_{\widetilde{N}_k}.
\end{equation*}

\begin{lemma}\label{Lemmat1}
  Assume $\sigma\in[0,2]$ and $\Psi:\mathbb{R}\to\mathbb{R}$ is
  defined as in \eqref{fg9}. For any
  $(k_1,k_2,k)\in\mathbb{Z}\times\mathbb{Z}\times\mathbb{Z}$ assume
  that $a_{k_1,k_2,k}\in[-4,4]$, $w_{k_1,k_2,k},w'_{k_2,k_1,k}\in
  C(\mathbb{R}:\widetilde{H}^{\sigma})$ are supported in
  $\mathbb{R}_x\times[-4,4]$, $\mathcal{F}(w_{k_1,k_2,k})\in Z_{k_1}$,
  $\mathcal{F}(w'_{k_2,k_1,k})\in Z_{k_2}$,
  \begin{equation*}
    \sup_{k_2,k\in\mathbb{Z}}\|\mathcal{F}(w_{k_1,k_2,k})\|_{Z_{k_1}}=\Gamma_{k_1}\text{ and }\sup_{k_1,k\in\mathbb{Z}}\|\mathcal{F}(w'_{k_2,k_1,k})\|_{Z_{k_2}}=\Gamma'_{k_2}.
  \end{equation*}
  Then
  \begin{equation}\label{ne2}
    \begin{split}
      \sum_{k\in\mathbb{Z}}(1+|n_k|
      )^{2\sigma+\delta/4}&\Big(\sum_{k_1,k_2\in\mathbb{Z}}
      \|\partial_x P_k(e^{ia_{k_1,k_2,k}\Psi}w_{k_1,k_2,k}w'_{k_2,k_1,k})\|_{N_k}\\
      &+\|P_k(\partial_x(e^{ia_{k_1,k_2,k}\Psi})w_{k_1,k_2,k}w'_{k_2,k_1,k})\|_{N_k}\Big)^2\\
      &\leq
      C\Big(\sum_{k_1\in\mathbb{Z}}(1+|n_{k_1}|)^{-\delta/4}{\Gamma_{k_1}}^2\Big)\Big(\sum_{k_2\in\mathbb{Z}}(1+|n_{k_2}|)^{2\sigma-\delta/4}{\Gamma'_{k_2}}^2\Big)\\
      &+C\Big(\sum_{k_1\in\mathbb{Z}}(1+|n_{k_1}|)^{2\sigma-\delta/4}{\Gamma_{k_1}}^2\Big)\Big(\sum_{k_2\in\mathbb{Z}}(1+|n_{k_2}|)^{-\delta/4}{\Gamma'_{k_2}}^2\Big).
    \end{split}
  \end{equation}
\end{lemma}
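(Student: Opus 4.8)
The plan is to estimate, for each fixed $k$, the inner double sum over $(k_1,k_2)$ and then to sum the resulting dyadic series in $k$; the two summands appearing on the left of \eqref{ne2} are handled in parallel, the one containing $P_k(\partial_x(e^{ia_{k_1,k_2,k}\Psi})w_{k_1,k_2,k}w'_{k_2,k_1,k})$ being strictly milder. I would begin with two reductions. First, by the bound $\|\partial_x(\cdot)\|_{N_k}\leq C(1+|n_k|)\|\cdot\|_{\widetilde{N}_k}$ recorded before the statement, it suffices to control $(1+|n_k|)\,\|P_k(e^{ia_{k_1,k_2,k}\Psi}w_{k_1,k_2,k}w'_{k_2,k_1,k})\|_{\widetilde{N}_k}$; since $\partial_x(e^{ia_{k_1,k_2,k}\Psi})=ia_{k_1,k_2,k}\phi_{\mathrm{low}}e^{ia_{k_1,k_2,k}\Psi}$, the second summand is then controlled by the same scheme without the derivative loss. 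Second, because $a_{k_1,k_2,k}\in[-4,4]$ and $\phi_{\mathrm{low}}$ is small in $H^\infty$, a time-localized version of $e^{ia_{k_1,k_2,k}\Psi}$ is an admissible factor in $S^\infty_{100}$ and a time-localized version of $\phi_{\mathrm{low}}e^{ia_{k_1,k_2,k}\Psi}$ is a restricted admissible factor in $S^2_{100}$, with norms bounded uniformly in $(k_1,k_2,k)$ (see the remarks after \eqref{ar3} and \eqref{ar3'}). Throughout I abbreviate $w_{k_1}=w_{k_1,k_2,k}$, $w'_{k_2}=w'_{k_2,k_1,k}$, $a=a_{k_1,k_2,k}$.

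Next I would insert an intermediate frequency. Using $\sum_{k_3}\chi_{k_3}\equiv 1$,
\[
P_k\big(e^{ia\Psi}w_{k_1}w'_{k_2}\big)=\sum_{k_3}P_k\big(e^{ia\Psi}\,P_{k_3}(w_{k_1}w'_{k_2})\big),
\]
and the $k_3$-summand vanishes unless $I_{k_3}\cap(I_{k_1}+I_{k_2})\neq\emptyset$, i.e. unless $n_{k_3}=n_{k_1}+n_{k_2}+O(|n_{k_1}|^{1/2}+|n_{k_2}|^{1/2})$. For the inner factor I would invoke the bilinear estimates of Section~\ref{Dyadic2} with output index $k_3$, which control $\|\mathbf{1}_{\{\xi\in I_{k_3}\}}(\tau-\omega(\xi)+i)^{-1}(\mathcal{F}(w_{k_1})\ast\mathcal{F}(w'_{k_2}))\|_{Z_{k_3}}$, splitting the triple $(k_1,k_2,k_3)$ into: the $\mathrm{Low}\times\mathrm{High}\to\mathrm{High}$ regime and its mirror image (Lemma~\ref{Lemmak1}(a), with a low-frequency block $k_i=0$ entering through $Z_0\hookrightarrow X_0^{-1/2+\delta}$ from \eqref{ro81} and Lemma~\ref{Lemmak1}(b)); the $\mathrm{High}\times\mathrm{High}\to\mathrm{Low}$ regime (Lemma~\ref{Lemmak2}, using \eqref{hw1.6} and the definitions \eqref{ne1} of $\widetilde{N}_0$ when $k_3=0$); and the comparable-frequency regime (Lemma~\ref{Lemmak3}). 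Each regime yields a bound of the form $(1+|n_{k_3}|)^{-1}\cdot(\text{bilinear gain})\cdot\Gamma_{k_1}\Gamma'_{k_2}$, where the gain always contains a factor $(1+|n_{k_3}|)^{-\delta}$ and, in the transversal regimes, an extra power of the smallest of $\{|n_{k_1}|,|n_{k_2}|,|n_{k_3}|\}$ — namely $(1+|n_{k_1}|)^{-1/2}$ in $\mathrm{Low}\times\mathrm{High}$, $(1+|n_{k_3}|)^{-1/2}$ in $\mathrm{High}\times\mathrm{High}\to\mathrm{Low}$, or the $\Lambda(k_1,k_2,k_3)$-decay in the comparable case.

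I would then pass from $k_3$ to $k$ by Proposition~\ref{Lemmab4} with $\epsilon=-1$, gaining the decay $(1+|k_3-k|)^{-60}\ln(2+|n_{k_3}|)$. Here there is a technical point: $e^{ia\Psi}\in S^\infty_{100}$ but not $S^2_{100}$, while Proposition~\ref{Lemmab4}(a) requires the input to lie in $Z_{k_3}^{\mathrm{high}}$; I would dispose of this exactly as in the proof of that proposition, by splitting the time-localized $e^{ia\Psi}$ into its very-low-spatial-frequency part, which acts as a harmless mollifier, and a remainder lying in $S^2_{100}$, which may be fed into Proposition~\ref{Lemmab4}(b) (or \eqref{ar4.2} when $k_3=0$) with no restriction on the input modulation. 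Because the exponent $60$ dominates every other power in play, summing over $k_3$ against $(1+|k_3-k|)^{-60}$ costs only an $O(1)$ factor; combined with the $(1+|n_{k_3}|)^{-1}$ from the bilinear step (here $|n_{k_3}|\approx|n_k|$) and the outer $(1+|n_k|)$ from $\partial_x$, this leaves precisely a net factor $(1+|n_k|)^{-\delta}\ln(2+|n_k|)$ times the transversal gain. Moreover, the support constraint $n_{k_3}\approx n_{k_1}+n_{k_2}$ together with $|k_3-k|=O(1)$ pins the high-frequency input index and $k_3$ to $O(1)$ choices once $k$ and the other input index are fixed, so for fixed $k$ one is left with an effectively one-parameter sum over $k_1$ (or $k_2$).

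Finally I would carry out the summation. For fixed $k$ I apply Cauchy--Schwarz in the free input index against the summable weight $\sum_{k_1}(1+|n_{k_1}|)^{-1/2-\delta/4}\approx\sum_{k_1}|k_1|^{-1-\delta/2}<\infty$ (using $|n_{k_1}|\approx|k_1|^2$), square, sum in $k$, and interchange the order of summation. In the $\mathrm{Low}\times\mathrm{High}\to\mathrm{High}$ regime the induced map $k\mapsto k_2$ is $O(1)$-to-$O(1)$ with $|n_k|\approx|n_{k_2}|$, and the exponent arithmetic is $2\sigma+\delta/4-2\delta=2\sigma-7\delta/4$ on the $\Gamma'_{k_2}$-factor and $-1/2+\delta/4\leq-\delta/4$ on the $\Gamma_{k_1}$-factor, the logarithms being absorbed into a small power $(1+|n_k|)^{3\delta/2}$ (and $0<\delta<1/100$, $\alpha-1=100\delta$ leave ample room); this produces the first term on the right of \eqref{ne2} and its mirror the second. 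In the $\mathrm{High}\times\mathrm{High}\to\mathrm{Low}$ and comparable regimes the logarithm enters only as $\ln(2+|n_{k_3}|)\approx\ln(2+|n_k|)$, and one uses respectively the constraint $|n_k|\leq|n_{k_1}|/2^{10}$ (so that $\sum_k(1+|n_k|)^{2\sigma+\delta/4-1}\ln^2(2+|n_k|)$ converges for $\sigma=0$, with a Cauchy--Schwarz split in $k$ for $\sigma>0$) and the $\Lambda$-decay of Lemma~\ref{Lemmak3} (for which $\sum_{k_1}\Lambda(k_1,k_2,k_3)^2=O(1)$) to match the right-hand side of \eqref{ne2}. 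The main obstacle is precisely this final bookkeeping: organizing the four-index case analysis so that every power of $(1+|n_\cdot|)$ balances and all logarithmic losses are swallowed by the $\delta$-gains of the bilinear estimates — most delicately in the comparable regime, where the $\Lambda$-decay must be summed with care, and in the low-frequency edge cases $k=0$, $k_1=0$, $k_2=0$, which have to be reconciled with the definitions \eqref{ne1} of $\widetilde{F}_0,\widetilde{N}_0$ and the $X_0^\rho$-scale of bilinear and multiplier estimates.
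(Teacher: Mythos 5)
Your first stage (the case with no exponential factor, i.e.\ the pure bilinear bookkeeping over the regimes of Lemmas \ref{Lemmak1}--\ref{Lemmak3}, with the counting of admissible triples $(k_1,k_2,k)$ and Cauchy--Schwarz in the free index) is essentially the paper's estimate \eqref{gi1} and is sound in spirit. The genuine gap is in how you reinstate the factor $e^{ia_{k_1,k_2,k}\Psi}$: you apply the bilinear estimate first, producing $\chi_{k_3}(\xi)(\tau-\omega(\xi)+i)^{-1}\mathcal{F}(w\,w')$ as a \emph{general} element of $Z_{k_3}$, and then try to commute multiplication by $e^{ia\Psi}$ across the $(\tau-\omega(\xi)+i)^{-1}$ weight via Proposition \ref{Lemmab4}. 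But part (a) of that proposition requires the input to lie in $Z_{k_3}^{\mathrm{high}}$, and your proposed repair --- split the time-localized $e^{ia\Psi}$ into a very-low-spatial-frequency piece (``harmless mollifier'') plus an $S^2_{100}$ remainder fed into part (b) --- fails for the low-frequency piece. The reason is modulation transfer: convolving with a piece of $\mathcal{F}(e^{ia\Psi}\eta_0(t/4))$ supported at spatial frequencies of size $2^{k'}$ (however small and with no time-frequency cost, $j''=0$) shifts $\tau-\omega(\xi)$ by as much as $\sim 2^{k'}|n_{k_3}|^{\alpha}$, and with the weight $(\tau-\omega(\xi)+i)^{-1}$ a downward shift of the modulation from $2^{j_1}$ to $O(1)$ costs a factor of order $2^{j_1/2}$, i.e.\ up to $(2^{k'}|n_{k_3}|^{\alpha})^{1/2}$, with no compensating smallness from the $S^\infty$ norm of the low-frequency piece (this is exactly the ``$j_2\ll j_1$'' difficulty in Step 3 of the proof of Proposition \ref{Lemmab4}, which is closed there only because of the hypothesis $2^{j_1+20}\geq|n_{k_1}|^{\alpha}$). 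So multiplication by $e^{ia\Psi}$ is simply not bounded, uniformly in $k$, on the $(\tau-\omega+i)^{-1}$-weighted spaces for low-modulation inputs, and your route does not circumvent this.

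The paper's proof resolves precisely this point by performing a modulation trichotomy on $w_{k_1,k_2,k}$ and $w'_{k_2,k_1,k}$ \emph{before} the bilinear estimate, using the decomposition \eqref{ar1set}: if one factor has high modulation, the exponential is moved onto that factor with Proposition \ref{Lemmab4}(a) at the level of the $Z$-norm (i.e.\ $\epsilon=0$), after which the $a=0$ estimate \eqref{gi1} applies; if both factors have low modulation, then by \eqref{om20} the product $P_{k'}(v\,v')$ automatically lies in $Z_{k'}^{\mathrm{high}}$, so Proposition \ref{Lemmab4}(a) with $\epsilon=-1$ may be applied on the outside (modulo the term with $\partial_x(e^{ia\Psi})$, which is genuinely $S^2$ and is handled as you and the paper both do, via part (b)). To make your argument work you would need to incorporate this high/low modulation splitting --- i.e.\ decide, depending on the modulations, whether the gauge factor is absorbed into an input or commuted past the output --- rather than attempting a frequency-only splitting of $e^{ia\Psi}$ after the bilinear step.
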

\begin{proof}[Proof of Lemma \ref{Lemmat1}]
  Assume first that
  \begin{equation}\label{end1}
    a_{k_1,k_2,k}=0\text{ for any }k_1,k_2,k\in\mathbb{Z}.
  \end{equation}
  In this case we use only the dyadic estimates in section
  \ref{Dyadic2}. For any $k\in\mathbb{Z}$ let
  \begin{equation*}
    Q_k=\{(k_1,k_2)\in\mathbb{Z}\times\mathbb{Z}:(I_{k_1}+I_{k_2})\cap I_{k}\neq\emptyset\text{ and }|n_{k_1}|\leq |n_{k_2}|\}.
  \end{equation*}
  With $J_l$ as in section \ref{linear}, it suffices to prove the
  (slightly stronger) estimate
  \begin{equation}\label{gi1}
    \begin{split}
      \sum_{l=0}^\infty&2^{(2\sigma+2+\delta/2)l}\sum_{n_k\in
        J_l}\big(\sum_{(k_1,k_2)\in Q_k}
      \|P_k(w_{k_1,k_2,k}\cdot w'_{k_2,k_1,k})\|_{\widetilde{N}_k}\big)^2\\
      &\leq
      C\big(\sum_{l_1=0}^\infty2^{-(\delta/2)l_1}\sum_{n_{k_1}\in
        J_{l_1}}{\Gamma_{k_1}}^2\big)\big(\sum_{l_2=0}^\infty2^{(2\sigma-\delta/2)
        l_2}\sum_{n_{k_2}\in J_{l_2}}{\Gamma'_{k_2}}^2\big).
    \end{split}
  \end{equation}
  We fix now $l\in\mathbb{Z}_+$ and estimate
  \begin{equation*}
    2^{2l}\sum_{n_k\in J_l}\big(\sum_{(k_1,k_2)\in Q_k}\|P_k(w_{k_1,k_2,k}\cdot w'_{k_2,k_1,k})\|_{\widetilde{N}_k}\big)^2.
  \end{equation*}
  We split the set $Q_k=Q'_k\cup Q''_k\cup Q'''_k$, where we define
  the three subsets according to the conditions of Lemma
  \ref{Lemmak1}, Lemma \ref{Lemmak2}, and Lemma \ref{Lemmak3}:
  \begin{align*}
    Q'_k=&\begin{cases} \{(k_1,k_2)\in Q_k:|n_{k_1}|\leq
      |n_k|/2^{10}\} & \text{ if }
      |n_k|\geq 2^{20}\\ \emptyset& \text{ if } |n_k|<2^{20}\end{cases}\\
    Q''_k=&\{(k_1,k_2)\in Q_k:|n_{k_1}|\geq 2^{10}(1+|n_k| )\}\\
    Q'''_k=&\{(k_1,k_2)\in Q_k:(1+|n_{k_i}|)/(1+|n_k|)\in
    [2^{-20},2^{20}]\text{ for }i=1,2\}
  \end{align*}
  
  Using Lemma \ref{Lemmak1} we estimate
  \begin{equation}\label{ro1}
    \begin{split}
      &2^{2l}\sum_{n_k\in J_l}\big(\sum_{(k_1,k_2)\in Q'_k}\|P_k(w_{k_1,k_2,k}\cdot w'_{k_2,k_1,k})\|_{\widetilde{N}_k}\big)^2\\
      &\leq C2^{-2\delta l}\sum_{n_k\in J_l}\big(\sum_{l_1\leq l-10}\sum_{l_2\in[l-5,l+5]}2^{-l_1/2}\Sigma'(l_1,l_2,n_k)\big)^2\\
      &\leq C2^{-3l\delta/2}\sum_{n_k\in J_l}\sum_{l_1\leq
        l-10}\sum_{l_2\in[l-5,l+5]}2^{-l_1}\Sigma'(l_1,l_2,n_k)^2,
    \end{split}
  \end{equation}
  where
  \begin{equation*}
    \Sigma'(l_1,l_2,n_k)=\sum_{n_{k_1}\in J_{l_1},n_{k_2}\in J_{l_2},|n_{k_1}+n_{k_2}-n_k|\leq 2^{l/2+10}}\Gamma_{k_1}\Gamma'_{k_2}.
  \end{equation*}
  We observe that for any $n_k\in J_l$
  \begin{equation*}
    \big|\{(n_{k_1},n_{k_2})\in J_{l_1}\times J_{l_2}:|n_{k_1}+n_{k_2}-n_k|\leq 2^{l/2+10}\}\big|\leq C2^{l_1/2}.
  \end{equation*}
  Indeed, for any $n_{k_1}\in J_{l_1}$ there are at most $C$ numbers
  $n_{k_2}\in J_{l_2}$ for which $|n_{k_1}+n_{k_2}-n_k|\leq
  2^{l/2+10}$.  Moreover, we observe that for fixed $k_1,k_2$
  \begin{equation*}
    \big|\{n_{k}\in J_{l}:|n_{k_1}+n_{k_2}-n_k|\leq 2^{l/2+10}\}\big|\leq C.
  \end{equation*}
  Thus
  \begin{equation*}
    \sum_{n_k\in J_l}\Sigma'(l_1,l_2,n_k)^2\leq C2^{l_1/2}\big(\sum_{n_{k_1}\in J_{l_1}}{\Gamma_{k_1}}^2\big)\big(\sum_{n_{k_2}\in J_{l_2}}{\Gamma'_{k_2}}^2\big),
  \end{equation*}
  which shows that the left-hand side of \eqref{ro1} is dominated by
  \begin{equation}\label{ro11}
    C2^{-3l\delta/2}\big(\sum_{l_1=0}^\infty2^{-l_1/2}\sum_{n_{k_1}\in J_{l_1}}{\Gamma_{k_1}}^2\big)\big(\sum_{l_2\in[l-5,l+5]}\sum_{n_{k_2}\in J_{l_2}}{\Gamma'_{k_2}}^2\big).
  \end{equation}

  Using now Lemma \ref{Lemmak2} we estimate
  \begin{equation}\label{ro2}
    \begin{split}
      &2^{2l}\sum_{n_k\in J_l}\big(\sum_{(k_1,k_2)\in Q''_k}
      \|P_k(w_{k_1,k_2,k}\cdot w'_{k_2,k_1,k})\|_{\widetilde{N}_k}\big)^2\\
      &\leq C2^{-l}\sum_{n_k\in J_l}\big(\sum_{l_1,l_2\geq l+5,|l_1-l_2|\leq 2}2^{-\delta l_1}\Sigma''(l_1,l_2,n_k)\big)^2\\
      &\leq C2^{-(1+\delta)l}\sum_{n_k\in J_l}\sum_{l_1,l_2\geq
        l+5,|l_1-l_2|\leq 2}2^{-\delta l_1}\Sigma''(l_1,l_2,n_k)^2,
    \end{split}
  \end{equation}
  where
  \begin{equation*}
    \Sigma''(l_1,l_2,n_k)=\sum_{n_{k_1}\in J_{l_1},n_{k_2}\in J_{l_2},|n_{k_1}+n_{k_2}-n_k|\leq 2^{l_1 /2+10}}\Gamma_{k_1}\Gamma'_{k_2}.
  \end{equation*}
  The Cauchy-Schwarz inequality implies
  \begin{equation*}
    \Sigma''(l_1,l_2,n_k)^2\leq \big(\sum_{n_{k_1}\in J_{l_1}}{\Gamma_{k_1}}^2\big)\big(\sum_{n_{k_2}\in J_{l_2}}{\Gamma'_{k_2}}^2\big).
  \end{equation*}
  Since $|\{n_k : n_k \in J_l\}|\leq C2^{l/2}$ the left-hand side of
  \eqref{ro2} is dominated by
  \begin{equation}\label{ro12}
    C2^{-(1/2+\delta)l}\big(\sum_{l_1=l+5}^\infty2^{-(\delta/2)l_1}\sum_{n_{k_1}\in J_{l_1}}{\Gamma_{k_1}}^2\big)\big(\sum_{l_2=l+5}^\infty2^{-(\delta/2)l_2}\sum_{n_{k_2}\in J_{l_2}}{\Gamma'_{k_2}}^2\big).
  \end{equation}
  
  Finally, using Lemma \ref{Lemmak3} we estimate
  \begin{equation}\label{ro3}
    \begin{split}
      &2^{2l}\sum_{n_k\in J_l}\big(\sum_{(k_1,k_2)\in Q'''_k}
      \|P_k(w_{k_1,k_2,k}\cdot w'_{k_2,k_1,k})\|_{\widetilde{N}_k}\big)^2\\
      &\leq C2^{-2\delta l}\sum_{n_k\in J_l}\big[\sum_{\genfrac{}{}{0pt}{}{1+|n_{k_1}|,1+|n_{k_2}|\in [2^{l-40},2^{l+40}]}{|n_{k_1}+n_{k_2}-n_k|\leq C2^{l/2+10}}}\Lambda(k_1,k_2,k)\Gamma_{k_1}\Gamma'_{k_2}\big]^2\\
      &\leq C2^{-2\delta l}\big[\sum_{1+|n_{k_1}|\in
        [2^{l-40},2^{l+40}]}{\Gamma_{k_1}}^2\big]\big[\sum_{1+|n_{k_2}|\in
        [2^{l-40},2^{l+40}]}{\Gamma'_{k_2}}^2\big].
    \end{split}
  \end{equation}
  The bound \eqref{gi1} follows from \eqref{ro11}, \eqref{ro12}, and
  \eqref{ro3}.

  We remove now the hypothesis \eqref{end1}. Let $\Gamma(\sigma)$
  denote the right-hand side of \eqref{ne2}. Since
  $\partial_x(e^{ia_{k_1,k_2,k}\Psi})\eta_0(t/4)\in S^2_{100}$, it
  follows from Proposition \ref{Lemmab4} (b) (with $\epsilon=-1$) that
  \begin{equation*}
    \begin{split}
      &\|P_k(\partial_x(e^{ia_{k_1,k_2,k}\Psi})w_{k_1,k_2,k}w'_{k_2,k_1,k})\|_{N_k}\\
      &\leq
      C\sum_{\nu\in\mathbb{Z}}(1+|\nu|)^{-60}\ln(2+|n_{k+\nu}|)\|P_{k+\nu}(w_{k_1,k_2,k}w'_{k_2,k_1,k})\|_{\widetilde{N}_{k+\nu}}
    \end{split}
  \end{equation*}
  for any $k,k_1,k_2\in\mathbb{Z}$. Thus, using \eqref{gi1},
  \begin{equation}\label{ne3}
    \sum_{k\in\mathbb{Z}}(1+|n_k|
    )^{2\sigma+2+\delta/4}\Big(\sum_{k_1,k_2\in\mathbb{Z}}\|P_k(\partial_x(e^{ia_{k_1,k_2,k}\Psi})w_{k_1,k_2,k}w'_{k_2,k_1,k})\|_{N_k}\Big)^2\leq \Gamma(\sigma).
  \end{equation}

  To control the first term in the right-hand of \eqref{ne2} we
  decompose the functions $w_{k_1,k_2,k}$ and $w'_{k_2,k_1,k}$ into
  high and low modulation components according to \eqref{ar1set}
  \begin{equation*}
    \begin{split}
      w_{k_1,k_2,k}&=u_{k_1,k_2,k}+v_{k_1,k_2,k}\\
      &=\mathcal{F}^{-1}( \mathbf{1}_{M^{\mathrm{high}}_{k_1}}(
      \tau-\omega(\xi))\mathcal{F}(w_{k_1,k_2,k}))+\mathcal{F}^{-1}(\mathbf{1}_{M^{\mathrm{low}}_{k_1}}(
      \tau-\omega(\xi))\mathcal{F}(w_{k_1,k_2,k})),
    \end{split}
  \end{equation*}
  and
  \begin{equation*}
    \begin{split}
      w'_{k_2,k_1,k}&=u'_{k_2,k_1,k}+v'_{k_2,k_1,k}\\
      &=\mathcal{F}^{-1}(\mathbf{1}_{M^{\mathrm{high}}_{k_2}}(
      \tau-\omega(\xi))\mathcal{F}(w'_{k_2,k_1,k}))+\mathcal{F}^{-1}(\mathbf{1}_{M^{\mathrm{low}}_{k_2}}(
      \tau-\omega(\xi))\mathcal{F}(w'_{k_2,k_1,k})).
    \end{split}
  \end{equation*}
  It follows from Proposition \ref{Lemmab4} (a) (with $\epsilon=0$)
  that, for any $\nu\in\mathbb{Z}$
  \begin{equation*}
    \sup_{k_2,k\in\mathbb{Z}}\|\mathcal{F}(P_{k_1+\nu}(e^{ia_{k_1,k_2,k}\Psi}\eta_0(t/4)u_{k_1,k_2,k}))\|_{Z_{k_1+\nu}}\leq C(1+|\nu|)^{-50}\ln(2+|n_{k_1}|)\Gamma_{k_1}.
  \end{equation*}
  Thus, using \eqref{gi1} with
  $\widetilde{u}_{k_1+\nu,k_1,k_2,k}=P_{k_1+\nu}(e^{ia_{k_1,k_2,k}\Psi}\eta_0(t/4)u_{k_1,k_2,k})$
  \begin{equation}\label{end2}
    \begin{split}
      &\sum_{k\in\mathbb{Z}}(1+|n_k|
      )^{2\sigma+\delta/4}\Big(\sum_{k_1,k_2\in\mathbb{Z}}
      \|\partial_xP_k(e^{ia_{k_1,k_2,k}\Psi}u_{k_1,k_2,k}w'_{k_2,k_1,k})\|_{N_k}\Big)^2\\
      &\leq\sum_{k\in\mathbb{Z}}(1+|n_k| )^{2\sigma+\delta/4}\Big(\sum_{\nu,k_1,k_2\in\mathbb{Z}}\|\partial_xP_k(\widetilde{u}_{k_1+\nu,k_1,k_2,k}w'_{k_2,k_1,k})\|_{N_k}\Big)^2\\
      &\leq \Gamma(\sigma),
    \end{split}
  \end{equation}
  as desired. Similarly,
  \begin{equation}\label{end3}
    \begin{split}
      \sum_{k\in\mathbb{Z}}(1+|n_k|
      )&^{2\sigma+\delta/4}\Big(\sum_{k_1,k_2\in\mathbb{Z}}
      \|\partial_xP_k(e^{ia_{k_1,k_2,k}\Psi}v_{k_1,k_2,k}u'_{k_2,k_1,k}\eta_0(t/4))\|_{N_k}\Big)^2\leq
      \Gamma(\sigma).
    \end{split}
  \end{equation}
  
  Finally, to control the contribution of
  $v_{k_1,k_2,k}v'_{k_2,k_1,k}$ we make the observation that the
  product of two functions of low modulation has high modulation:
  \begin{equation*}
    \mathcal{F}(P_{k'}(v_{k_1,k_2,k}v'_{k_2,k_1,k}))\in Z_{k'}^{\mathrm{high}}\text{ for any }k'\in\mathbb{Z}.
  \end{equation*}
  This follows from \eqref{om20} (recall that
  $Z_k^{\mathrm{high}}=Z_k$ if $|n_k|^\al\leq 2^{20}$). It follows
  from Proposition \ref{Lemmab4} (a) (with $\epsilon=-1$) that
  \begin{equation*}
    \begin{split}
      &\|\partial_xP_k(e^{ia_{k_1,k_2,k}\Psi}\eta_0(t/4)v_{k_1,k_2,k}v'_{k_2,k_1,k})\|_{N_k}\\
      &\leq \| P_k(\partial_x(e^{ia_{k_1,k_2,k}\Psi})\eta_0(t/4)v_{k_1,k_2,k}v'_{k_2,k_1,k})\|_{N_k}\\
      &+C\sum_{\nu\in\mathbb{Z}}(1+|\nu|)^{-50}\ln(2+|n_{k+\nu}|)
      \|\partial_xP_{k+\nu}(v_{k_1,k_2,k}v'_{k_2,k_1,k})\|_{N_{k+\nu}}.
    \end{split}
  \end{equation*}
  Thus, using \eqref{gi1} and \eqref{ne3}
  \begin{equation}\label{end4}
    \begin{split}
      \sum_{k\in\mathbb{Z}}(1+|n_k|
      )&^{2\sigma+\delta/4}\Big(\sum_{k_1,k_2\in\mathbb{Z}}
      \|\partial_xP_k(e^{ia_{k_1,k_2,k}\Psi}\eta_0(t/4)v_{k_1,k_2,k}v'_{k_2,k_1,k})\|_{N_k}\Big)^2\leq
      \Gamma(\sigma).
    \end{split}
  \end{equation}
  The lemma follows from \eqref{ne3}, \eqref{end2}, \eqref{end3}, and
  \eqref{end4}.
\end{proof}

\section{Commutator estimates}\label{commutatorest}
We prove now several commutator estimates. Recall the definitions
\eqref{ne1}.

\begin{lemma}\label{lm:comm}
  Assume that $R(D)=\partial_x^{\sigma_1} D^{\sigma_2}$ for
  $\sigma_1\in \{0,1\}$ and $1<\sigma_2<2$ or $\sigma_2=0$. Assume
  further that $m,m'\in S^\infty_{150}$,
  $\|m\|_{S^\infty_{150}}+\|m'\|_{S^\infty_{150}}\leq 1$. Then, for
  any $\sigma\in[0,2]$ and $k,\mu\in\mathbb{Z}$
  \begin{equation}\label{eq:comm-a}
    \begin{split}
      &(1+|\mu|)^{40}(1+|n_{k+\mu}|)^{2\sigma}
      \|P_{k+\mu}[mP_kR(D)(m'w)-P_kR(D)(mm'w)]\|^2_{F_{k+\mu}}\\
      \leq{} &
      C\sum_{\nu\in\mathbb{Z}}(1+|\nu|)^{-40}(1+|n_{k+\nu}|)^{2\sigma+2\sigma_1+2\sigma_2-1}\ln^2(2+|n_{k+\nu}|)
      \|P_{k+\nu}w\|^2_{\widetilde F_{k+\nu}},
    \end{split}
  \end{equation}
  and
  \begin{equation}\label{eq:comm-b}
    \begin{split}
      &(1+|\mu|)^{40}(1+|n_{k+\mu}|)^{2\sigma}
      \|P_{k+\mu}[mP_kR(D)(m'w)-P_kR(D)(mm'w)]\|^2_{N_{k+\mu}}\\
      \leq{}&
      C\sum_{\nu\in\mathbb{Z}}(1+|\nu|)^{-40}(1+|n_{k+\nu}|)^{2\sigma+2\sigma_1+2\sigma_2-1}\ln^2(2+|n_{k+\nu}|)
      \|P_{k+\nu}w\|^2_{\widetilde N_{k+\nu}}.
    \end{split}
  \end{equation}
\end{lemma}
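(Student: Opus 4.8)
The plan is to reduce both estimates to a single frequency-and-modulation localized bound and then carry out a summation of the kind already performed in Section~\ref{mult}, the one new ingredient being a symbol-difference estimate that encodes the commutator gain. It suffices to prove \eqref{eq:comm-b}: the proof of \eqref{eq:comm-a} is identical, since the only structural inputs are the bilinear estimates of Corollary~\ref{Lemmad2} and the decomposition bounds \eqref{ar10}, and deleting the modulation weight $(\tau-\om(\xi)+i)^{-1}$ from both sides only simplifies matters. We may assume $(1+|n_k|)^\al\ge 2^{20}$; otherwise $R(D)P_k$ is a Fourier multiplier supported in a bounded interval with bound $\le C$, and \eqref{eq:comm-b} follows by crude $L^2$ estimates. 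Write $r(\xi)=(i\xi)^{\si_1}|\xi|^{\si_2}$, so that $R(D)P_k$ has symbol $\chi_k(\xi)r(\xi)$. Using \eqref{rh31} together with $|r(\xi)|\le C(1+|n_k|)^{\si_1+\si_2}$ and $|r'(\xi)|\le C(1+|n_k|)^{\si_1+\si_2-1}$ on $\supp\chi_k$, we record the two facts that drive the proof:
\[|\chi_k(\xi)r(\xi)|\le C(1+|n_k|)^{\si_1+\si_2},\qquad |\partial_\xi(\chi_k(\xi)r(\xi))|\le C(1+|n_k|)^{\si_1+\si_2-1/2}.\]
The gain of only a \emph{half} derivative in the second bound reflects the fact that the cut-offs $\chi_k$ live on frequency bands of width $\approx(1+|n_k|)^{1/2}$; it is precisely this half derivative that accounts for the exponent $2\si_1+2\si_2-1$ on the right-hand side of \eqref{eq:comm-b}.

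Taking Fourier transforms, $\mathcal{F}\big(P_{k+\mu}[mP_kR(D)(m'w)-P_kR(D)(mm'w)]\big)(\xi,\tau)$ equals
\[\chi_{k+\mu}(\xi)\int_{\R^2}\widehat m(\xi-\xi_2,\tau-\tau_2)\,\big[\chi_k(\xi_2)r(\xi_2)-\chi_k(\xi)r(\xi)\big]\,\mathcal{F}(m'w)(\xi_2,\tau_2)\,d\xi_2\,d\tau_2.\]
First I would decompose $m$ and $m'$ into pieces $m_{k'',j''}$, $m'_{l'',i''}$ as in \eqref{ar12} (both in $S^\infty_{150}$, hence both obeying \eqref{ar10}), decompose $w=\sum_\nu\sum_{j_1}w^{j_1}_{k+\nu}$ with $w^{j_1}_{k+\nu}$ supported in $D^{j_1}_{k+\nu}$ via \eqref{repr1}--\eqref{repr2}, and split the commutator into pieces of output modulation $\tau-\om(\xi)\in J_{j_2}$; the $Z_{k+\mu}$-norm is then the $\beta$-weighted $\ell^1_{j_2}$-sum of the $L^2$-norms of these pieces. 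On the support of a generic product one has $1+|n_k|\approx|\xi_2|\approx\max(1+|n_{k+\nu}|,2^{l''})$, $|\xi-\xi_2|\approx 2^{k''}$, and --- by \eqref{om20} --- $2^{j_2}\le C\max\big(2^{j_1},2^{j''},2^{i''},(1+|n_k|)^\al\max(2^{k''},2^{l''})\big)$. The factor $m'_{l'',i''}$ is passive throughout: it supplies an $L^\infty$-factor $\le C(1+2^{l''})^{-80}2^{-80i''}$ by \eqref{ar10}, summable in $l'',i''$, and is never differentiated.

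Each generic piece is estimated in one of two regimes, according to the size of $2^{k''}$. In the \emph{commutator regime} $2^{k''}\le 2^{-10}(1+|n_k|)$ one has $1+|n_{k+\mu}|\approx 1+|n_k|$, and the mean value theorem together with the second displayed bound gives $|\chi_k(\xi_2)r(\xi_2)-\chi_k(\xi)r(\xi)|\le C\,2^{k''}(1+|n_k|)^{\si_1+\si_2-1/2}$; moreover, when $|\mu|\ge 3$ the frequency separation forces $2^{k''}\gtrsim\mathrm{dist}(\supp\chi_{k+\mu},\supp\chi_k)\gtrsim(1+|\mu|)(1+|n_k|)^{1/2}$. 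Putting $m_{k'',j''}$ and $m'_{l'',i''}$ into $L^\infty$ via \eqref{ar10}, $w^{j_1}_{k+\nu}$ into $L^2$, and summing over $j_2,j_1,j'',i'',k'',l'',\nu,\mu$ exactly as in Steps~3--7 of Section~\ref{mult} --- invoking Corollary~\ref{Lemmad2}(b) in the sub-cases $j_2\ll j_1$ that appear once the weight $(\tau-\om(\xi)+i)^{-1}$ is present --- one finds that the contribution of this regime to the left side of \eqref{eq:comm-b} is dominated by its right side: the $(1+|n_k|)^{\si_1+\si_2-1/2}$ produces the exponent $2\si_1+2\si_2-1$, the $j_2$-summation produces, as usual, the logarithmic losses $\ln^2(2+|n_{k+\nu}|)$, and the factor $2^{k''}$ is absorbed by the decay $(1+2^{k''})^{-80}$ of $\widehat m$ (and, when $|\mu|\ge 3$, converted into the weights $(1+|\mu|)^{-40}(1+|n_{k+\mu}|)^{2\si}$). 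In the \emph{remote regime} $2^{k''}>2^{-10}(1+|n_k|)$ there is no commutator gain, but the frequency separation forces $2^{k''}\gtrsim\max\big(1+|n_k|,1+|n_{k+\mu}|\big)$ and $(1+|\mu|)^{40}\le C(1+2^{k''})^{40}$; bounding the symbol difference crudely by $C(1+|n_k|)^{\si_1+\si_2}+C(1+|n_{k+\mu}|)^{\si_1+\si_2}$ and using $(1+2^{k''})^{-80}$, which beats all positive powers of $1+|n_k|$, $1+|n_{k+\mu}|$ and $1+|\mu|$ present, shows that this regime contributes far less than the right side of \eqref{eq:comm-b}.

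The hardest part is the bookkeeping of the many nested sums and the modulation analysis needed to sum the weights $2^{j_2/2}\beta_{k+\mu,j_2}$ against $2^{j_1/2}\beta_{k+\nu,j_1}$; as in Section~\ref{mult} this forces one to treat the sums over the very negative $k''$ and $l''$ collectively --- via the bound on $m_{\le k'',j''}$ in \eqref{ar10} --- rather than term by term, and to handle the endpoint cases $k+\mu=0$ and $k+\nu=0$ separately, using the $X^0_0$ norm from \eqref{ne1} exactly as in Steps~1--2 and 6--7 of Section~\ref{mult}. Once the two displayed symbol bounds and the accompanying frequency-separation observation are in hand, no analytic input beyond Corollary~\ref{Lemmad2} and \eqref{ar10} is needed.
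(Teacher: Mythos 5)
Your core mechanism is the same one the paper exploits, but packaged differently, and in the generic regime your route can be made to work. The quantitative input you isolate, $|\partial_\xi(\chi_k(\xi)r(\xi))|\leq C(1+|n_k|)^{\sigma_1+\sigma_2-1/2}$ (the derivative falling on the band-limited cutoff $\chi_k$ being the dominant term), is exactly the half-derivative gain the paper extracts; the paper, however, does not redo any modulation analysis. It writes the commutator kernel by the fundamental theorem of calculus in the \emph{output} frequency (the Heaviside/$\partial_\gamma$ representation), so that for each fixed $\gamma\in I_{k+\mu}$ the operator is literally multiplication by a \emph{restricted admissible} factor $\mathcal{F}^{-1}K'(\cdot,\cdot,\gamma)\in S^2_{100}$ of norm $\leq C(1+|\mu|)^{-40}(1+|n_{k+\mu}|)^{\sigma_1+\sigma_2-1}$; then Proposition \ref{Lemmab4} (\eqref{ar4.1}, \eqref{ar4.2}) applies as a black box, and the $\gamma$-integration over $|I_{k+\mu}|\approx(1+|n_{k+\mu}|)^{1/2}$ produces the exponent $2\sigma_1+2\sigma_2-1$. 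Your direct MVT route must instead re-run the Section \ref{mult} summations with the gain inserted, and one point in your bookkeeping should be corrected: for $k''\leq 0$ the factor $2^{k''}$ is not "absorbed by $(1+2^{k''})^{-80}$"; its real role, in the delicate sub-case $\epsilon=-1$, $j_2\ll j_1$ (where, unlike in \eqref{ar4}, there is no high-modulation restriction on $P_{k+\nu}w$), is to cancel the factor $2^{-k''}$ in the bound $\|m_{k'',j''}\|_{L^2}\leq C2^{-k''}(1+2^{k''})^{-80}2^{-80j''}$ so that Corollary \ref{Lemmad2}(b) closes; this is precisely the mechanism $\partial_xS^\infty_N\subseteq S^2_{N-10}$ that lets the paper invoke \eqref{ar4.1} without any modulation hypothesis. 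With that accounting made explicit, your "commutator regime" does close, and the remote regime is indeed trivial.

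The genuine gap is in the low-frequency cases. Dispatching $(1+|n_k|)^\alpha\leq 2^{20}$ by "crude $L^2$ estimates", and the endpoints $k+\mu=0$, $k+\nu=0$ by "exactly as in Steps 1--2 and 6--7", does not work as stated: when the output is measured in $Z_0$ (or in $X_0^{-1/2+\delta}$) the weights $2^{-k'}$, resp. $2^{-k'(1/2-\delta)}$, as $k'\to-\infty$ are not controlled by plain $L^2$ bounds of the output, and Steps 1--2, 6--7 of Section \ref{mult} concern multiplication operators, not the commutator with $R(D)P_k$ at low frequencies. This is why the paper's Case 3 ($k+\mu=0$, $|k|\leq 1$) is the longest part of its proof: it uses exact algebraic cancellations -- for $\sigma_1=\sigma_2=0$ the piece $m_{\leq-10}R(D)P_k[m'w]_{\leq-20}-R(D)P_k(m_{\leq-10}[m'w]_{\leq-20})$ vanishes identically, and for $\sigma_1=1$, $\sigma_2=0$ it equals $-(\partial_xm_{\leq-10})P_k[m'w]_{\leq-20}$ -- together with the $Y_0$ component of $Z_0$, the smallness of $\|m_{>k'}\|_{S^2_{100}}$, and the $L^2$-bound \eqref{eq:l2-bound}; similarly Case 2 ($k+\mu=0$, $|k|\geq 2$) replaces $m$ by $m_{>k'}$ before anything else. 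Your proposal contains no substitute for these cancellations and decompositions, so the small-$|n_k|$ and zero-output-frequency portions of \eqref{eq:comm-a}--\eqref{eq:comm-b} remain unproved as written and need to be supplied along the lines of the paper's Cases 2 and 3.
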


\begin{proof}[Proof of Lemma \ref{lm:comm}] We decompose $w=\sum_{\nu
    \in \Z}P_{k+\nu}w$ and define the function
  \[q(\xi)=(i\xi)^{\sigma_1}|\xi|^{\sigma_2}\chi_k(\xi).\] We
  calculate
  \begin{equation*}
    \begin{split}
      &\mathcal{F}[P_{k+\mu}[mP_kR(D)(m'P_{k+\nu}w)-P_kR(D)(mm'P_{k+\nu}w)]](\xi,\tau)\\
      =&C\chi_{k+\mu}(\xi)\int_{\mathbb{R}\times\mathbb{R}}\mathcal{F}(m)(\xi_1,\tau_1)
      \mathcal{F}(m'P_{k+\nu}w)(\xi-\xi_1,\tau-\tau_1)[q(\xi)-q(\xi-\xi_1)]\,d\xi_1 d\tau_1\\
      =&C\int_{\mathbb{R}\times\mathbb{R}}\mathcal{F}(P_{k+\nu}w)(\xi-\xi_2,\tau-\tau_2)
      \cdot K(\xi_2,\tau_2,\xi)\,d\xi_2d\tau_2\\
      =& C\int_{I_{k+\mu}}\mathrm{H}(\xi-\gamma)
      \Big[\int_{\mathbb{R}^2}\mathcal{F}(P_{k+\nu}w)(\xi-\xi_2,\tau-\tau_2)\cdot
      K'(\xi_2,\tau_2,\gamma)\,d\xi_2d\tau_2\Big]d\gamma,
    \end{split}
  \end{equation*}
  where $\mathrm{H}$ denotes the Heaviside-function and
  \begin{multline} K(\xi_2,\tau_2,\xi)=\int_{\mathbb{R}\times\mathbb{R}}\mathcal{F}(m)(\xi_1,\tau_1)\mathcal{F}(m')(\xi_2-\xi_1,\tau_2-\tau_1)\\
    \cdot [q(\xi)-q(\xi-\xi_1)]\chi_{k+\mu}(\xi)\,d\xi_1d\tau_1;
  \end{multline}
  and
  \begin{multline} K'(\xi_2,\tau_2,\gamma)=\int_{\mathbb{R}\times\mathbb{R}}\mathcal{F}(m)(\xi_1,\tau_1)\mathcal{F}(m')(\xi_2-\xi_1,\tau_2-\tau_1)\\
    \cdot \partial_\gamma[(q(\gamma)-q(\gamma-\xi_1))\chi_{k+\mu}(\gamma)]\,d\xi_1d\tau_1.
  \end{multline} {\bf Case 1: $k+\mu\not=0$.} By definition of the
  norms it follows for $\epsilon\in \{0,-1\}$
  \begin{align*}
    &\|(\tau-\omega(\xi)+i)^{\epsilon}\mathcal{F}[P_{k+\mu}[mP_kR(D)(m'P_{k+\nu}w)-P_kR(D)(mm'P_{k+\nu}w)]]\|_{Z_{k+\mu}}\\
    \leq & C \int_{I_{k+\mu}}n(\gamma)d\gamma,
  \end{align*}
  where
  \[
  n(\gamma):=\Big\|(\tau-\omega(\xi)+i)^{\epsilon}
  \int_{\mathbb{R}^2}\mathcal{F}(P_{k+\nu}w)(\xi-\xi_2,\tau-\tau_2)\cdot
  K'(\xi_2,\tau_2,\gamma)\,d\xi_2d\tau_2\Big\|_{Z_{k+\mu}}.
  \]
  For $\gamma\in I_{k+\mu}$ fixed it is easy to see that
  \begin{equation*}
    \mathcal{F}^{-1}(K'(.,.,\gamma))=Cm'\cdot \mathcal{F}^{-1}\big[\mathcal{F}(m)(\xi_1,\tau_1)\cdot\partial_\gamma[(q(\gamma)-q(\gamma-\xi_1))\chi_{k+\mu}(\gamma)]\big] 
  \end{equation*}
  is a restricted admissible factor and
  \begin{equation*}
    \|\mathcal{F}^{-1}(K'(.,.,\gamma))\|_{S^2_{100}}\leq C(1+|\mu| )^{-40}(1+|n_{k+\mu}| )^{\sigma_1+\sigma_2-1}.
  \end{equation*}

  The bounds \eqref{eq:comm-a} and \eqref{eq:comm-b} follow from
  estimate \eqref{ar4.1} in the case $|k+\nu|\geq 1$ and from
  \eqref{ar4.2} in the case $k+\nu=0$, combined with the
  Cauchy-Schwarz inequality.  Recall that the integration in $\gamma$
  is over an interval of length $\approx (1+|n_{k+\mu}|)^{1/2}$.

  {\bf Case 2: $k+\mu=0$ and $|k|\geq 2$.}  We use the following
  decomposition: For any $k'\in\mathbb{Z}$ define
  \begin{equation*}
    m_{k'}=\mathcal{F}^{-1}\big[\widetilde{\eta}_{k'}\mathcal{F}(m)\big]
  \end{equation*}
  and set $m_{\leq k'}=\sum_{k'' \leq k'}m_{k''}$, $m_{> k'}=\sum_{k''
    > k'}m_{k''}$.  If $m$ satisfies \eqref{ar3} we obtain
  \begin{equation*}
    \|m_{>k'}\|_{S^2_{100}}\leq C 2^{-k'}(1+2^{k'})^{-80}.
  \end{equation*}
  We have
  \begin{equation}\label{eq:case2a}
    \begin{split}
      &P_{k+\mu}[mP_kR(D)(m'P_{k+\nu}w)-P_kR(D)(mm'P_{k+\nu}w)]\\
      =&P_{0}[m_{>k'} P_kR(D)(m'P_{k+\nu}w)]
    \end{split}
  \end{equation}
  for $k'=\log_2(|n_k|)-10$.  We apply \eqref{ar4.1}
  \begin{equation}\label{eq:case2a-1}
    \begin{split}
      &\|(\tau-\omega(\xi)+i)^\epsilon
      \mathcal{F}P_{0}[m_{>k'}P_kR(D)(m'P_{k+\nu}w)]](\xi,\tau)\|_{Z_0}
      \\
      \leq {} &C
      (1+|k|)^{-50}(1+|n_k|)^{-80}\|\chi_{k}(\xi)(\tau-\omega(\xi)+i)^\epsilon\mathcal{F}(m'P_{k+\nu}w)(\tau,\xi)\|_{Z_k}.
    \end{split}
  \end{equation}
  If $|\nu|\geq 2$ we repeat the same argument with $m'$:
  \[
  \chi_{k}(\xi)(\tau-\omega(\xi)+i)^\epsilon\mathcal{F}(m'P_{k+\nu}w)(\tau,\xi)=\chi_{k}(\xi)(\tau+\omega(\xi)+i)^\epsilon\mathcal{F}(m_{>k'}'
  P_{k+\nu}w)(\tau,\xi).
  \]
  with $k'=\log_2(1+|n_\nu|)-10$, and we apply \eqref{ar4.1} if
  $k+\nu\not=0$ and \eqref{ar4.2} otherwise.

  If $|\nu|\leq 1$ we can afford to use the crude bound
  \begin{equation}\label{eq:crude}
    \begin{split}
      &\|\chi_{k}(\xi)(\tau-\omega(\xi)+i)^\epsilon\mathcal{F}(m' P_{k+\nu}w)(\tau,\xi)\|_{Z_k}\\
      \leq {}&
      C(1+|n_k|^{\al+1})\|\chi_{k+\nu}(\xi)(\tau-\omega(\xi)+i)^\epsilon\mathcal{F}w(\tau,\xi)\|_{Z_{k+\nu}},
    \end{split}
  \end{equation}
  which is straightforward, compare \eqref{ar4} and its proof for the
  high modulation case. In conjunction with \eqref{eq:case2a-1} this
  finishes the discussion of Case 2.

  {\bf Case 3: $k+\mu=0$ and $|k|\leq 1$.}

  \emph{Subcase 3a:} $|\nu|\geq 3$. Define
  $\nu'=\log_2(1+|n_{\nu}|)-10$. It suffices to consider
  \[
  \|\chi_0(\xi)(\tau-\omega(\xi)+i)^{\epsilon}\mathcal{F}[m
  R(D)P_k[m'_{>\nu'}P_{k+\nu}w]-R(D)P_k[[mm']_{>\nu'}P_{k+\nu}w]\|_{Z_0}.
  \]
  We apply the triangle inequality and obtain the estimate
  \begin{align*}
    &\|\chi_0(\xi)(\tau-\omega(\xi)+i)^{\epsilon}\mathcal{F}[m
    R(D)P_k[m'_{>\nu'}P_{k+\nu}w]]\|_{Z_0} \\\leq& C
    \|\chi_k(\xi)(\tau-\omega(\xi)+i)^{\epsilon}\mathcal{F}[m'_{>\nu'}P_{k+\nu}w]\|_{Z_k}
  \end{align*}
  for the first contribution by applying \eqref{ar4}. Due to
  \[\|m_{>\nu'}\|_{S^2_{100}}\leq C(1+|n_\nu|)^{-80}\] we can now
  apply \eqref{ar4.1} to conclude further
  \begin{align*}
    &\|\chi_k(\xi)(\tau-\omega(\xi)+i)^{\epsilon}\mathcal{F}[m'_{>\nu'}P_{k+\nu}w]\|_{Z_k}\\
    \leq{}& C(1+|\nu|)^{-60}(1+|n_\nu|)^{-60}
    \|\chi_{k+\nu}(\xi)(\tau-\omega(\xi)+i)^{\epsilon}\mathcal{F}w\|_{Z_{k+\nu}},
  \end{align*}
  which is sufficient.  For the second contribution we directly use
  the estimate \eqref{ar4.1} and obtain
  \begin{align*}
    &\|\chi_0(\xi)(\tau-\omega(\xi)+i)^{\epsilon}\mathcal{F}[R(D)P_{k}[[mm']_{\nu'}P_{k+\nu}w]\|_{Z_0}\\
    \leq {} & C(1+|\nu|)^{-60}(1+|n_\nu|)^{-60}
    \|\chi_{k+\nu}(\xi)(\tau-\omega(\xi)+i)^{\epsilon}\mathcal{F}w\|_{Z_{k+\nu}},
  \end{align*}
  because
  \[\|[mm']_{>\nu'}\|_{S^2_{100}}\leq C(1+|n_\nu|)^{-80}.\]

  \emph{Subcase 3b:} $|\nu|\leq 2$. The only issue here is the
  structure low frequency component $Z_0$ of the norms.  We decompose
  $m=m_{\leq -10}+m_{> -10}$ and $m'P_{k+\nu}w=[m'P_{k+\nu}w]_{\leq
    -20}+[m'P_{\nu}w]_{> -20}$.

  \underline{Contribution i):} $m_{\leq -10}$ and
  $[m'P_{k+\nu}w]_{\leq -20}$. In the case where $\sigma_1=\sigma_2=0$
  we have
  \[m_{\leq -10}R(D)P_k[m'P_{k+\nu}w]_{\leq -20}-R(D)P_k m_{\leq
    -10}[m'P_{k+\nu}w]_{\leq -20}=0,\] and if $\sigma_1=1$,
  $\sigma_2=0$ we obtain
  \begin{align*}&m_{\leq -10}R(D)P_k[m'P_{k+\nu}w]_{\leq
      -20}-R(D)P_km_{\leq -10}[m'P_{k+\nu}w]_{\leq
      -20}\\=&-(\partial_xm_{\leq -10})P_{k}[m'P_{k+\nu}w]_{\leq -20},
  \end{align*} hence we can assume $k=0$ and the presence of $P_k$ is
  redundant. In this case, we decompose \[m'=m'_{\leq -30}+m'_{>-30},
  \quad P_{k+\nu}w=P_{k+\nu}w_{\leq -30}+P_{k+\nu}w_{>-30}.\] For the
  first contribution ($m'_{\leq -30}$ and $P_{k+\nu}w_{\leq -30}$) we
  obtain the bound
  \[
  \|(\partial_xm_{\leq -10})m'_{\leq -30}[P_{k+\nu}w]_{\leq
    -30}\|_{Z_0}\leq C \|P_{k+\nu}w\|_{\widetilde{Z}_{k+\nu}}
  \]
  by using $\|\partial_xm_{\leq -10} \cdot m'_{\leq
    -30}\|_{S^2_{100}}\leq 1$ and \eqref{ar4.2}.  For the second
  contribution ($m'_{>-30}$ and $P_{k+\nu}w$) we obtain the bound
  \[
  \|(\partial_xm_{\leq -10})P_k[m'_{>-30}P_{k+\nu}w]_{\leq
    -20}\|_{Z_0}\leq C \|P_{k+\nu}w\|_{\widetilde{Z}_{k+\nu}}
  \]
  where we successively use \eqref{ar4.1} or \eqref{ar4.2} as well
  as \[\|\partial_xm_{\leq -10}\|_{S^2_{100}}\leq 1 , \quad \|m'_{>
    -30}\|_{S^2_{100}}\leq 1.\]

  Concerning the third contribution ($m'_{\leq -30}$ and
  $[P_{k+\nu}w]_{>-30}$) we successively apply \eqref{ar4.2} and
  \eqref{ar4} and we observe that $\|[P_{k+\nu}w]_{>-30}\|_{Z_0}\leq C
  \|P_{k+\nu}w\|_{\widetilde{Z}_0}$.

  If $\sigma_1+\sigma_2>1$ we apply the triangle inequality and use
  \eqref{ar4} for the first term
  \begin{align*}
    & \|\chi_0(\xi)(\tau-\omega(\xi)+i)^{\epsilon}\mathcal{F}[m_{\leq -10} R(D)P_k[m'P_{k+\nu}w]_{\leq -20}]\|_{Z_0}\\
    \leq{} & C
    \|\chi_0(\xi)(\tau-\omega(\xi)+i)^{\epsilon}\mathcal{F}[R(D)[m'P_{k+\nu}w]_{\leq
      -20}\|_{Z_0}\\
    \leq{} & C \sum_{j=0}^\infty 2^{j \epsilon}2^{(1-\delta)j}
    \|\eta_j(\tau)\mathcal{F}[m'P_{k+\nu}w]\|_{L^2_{\tau,\xi}},
  \end{align*}
  We decompose in modulation and use Plancherel (similarly to Step 1
  in the proof of Proposition \ref{Lemmab4}) to obtain the estimate
  \begin{equation}\label{eq:l2-bound}
    \begin{split}
      &\sum_{j=0}^\infty 2^{j \epsilon}2^{(1-\delta)j} \|\eta_j(\tau)\mathcal{F}[m'P_{k+\nu}w]\|_{L^2_{\tau,\xi}}\\
      \leq &C \sum_{j_1=0}^\infty 2^{j_1 \epsilon}2^{(1-\delta)j_1}
      \|\chi_{k+\nu}(\xi)\eta_{j_1}(\tau)\mathcal{F}w(\tau,\xi)\|_{L^2_{\tau,\xi}},
    \end{split}
  \end{equation}
  where we exploit that $m'\in S^\infty_{100}$, which is
  sufficient. Concerning the second term
  \begin{align*}
    & \|\chi_0(\xi)(\tau-\omega(\xi)+i)^{\epsilon}\mathcal{F}[R(D)P_k m_{\leq -10}[m'P_{k+\nu}w]_{\leq -20}]\|_{Z_0}\\
    \leq{} &C\sum_{j=0}^\infty2^{\epsilon j}2^{(1-\delta)j}
    \|\eta_j(\tau)\chi_0(\xi)\mathcal{F}m_{\leq -10}
    [m'P_{k+\nu}w]_{\leq -20}\|_{L^2} \\\leq{} &C \sum_{j_1=0}^\infty
    2^{j_1 \epsilon}2^{(1-\delta)j_1}
    \|\chi_{k+\nu}(\xi)\eta_{j_1}(\tau)\mathcal{F}w(\tau,\xi)\|_{L^2_{\tau,\xi}},
  \end{align*}
  as in \eqref{eq:l2-bound}, using $m_{\leq -10} , m'\in
  S^\infty_{100}$.

  \underline{Contribution ii):} $m_{> -10}$ and $[m'P_{k+\nu}w]_{\leq
    -20}$. We apply the triangle inequality. Note that
  $\|m_{>-10}\|_{S^2_{100}}\leq 1$ and there is only a contribution
  from the first term if $k=0$. Note that for $|\xi|\leq 2^{-20}$ the
  term vanishes.  We obtain the bound
  \begin{align*}
    & \|\chi_0(\xi)(\tau-\omega(\xi)+i)^{\epsilon}\mathcal{F}[m_{> -10} R(D)P_k[m'P_{k+\nu}w]_{\leq -20}]\|_{Z_0}\\
    \leq{} &C \sum_{j=0}^\infty 2^{j \epsilon}2^{(1-\delta)j}
    \|\chi_0(\xi)\eta_j (\tau)\mathcal{F}[m_{> -10}
    R(D)[m'P_{k+\nu}w]_{\leq -20}\|_{L^2_{\tau,\xi}} \\\leq{} &C
    \sum_{j_1=0}^\infty 2^{\epsilon j_1 } 2^{(1-\delta)j_1}
    \|\chi_{k+\nu}(\xi)\eta_{j_1}
    (\tau)\mathcal{F}w(\tau,\xi)\|_{L^2_{\tau,\xi}},
  \end{align*}
  by applying \eqref{eq:l2-bound} twice.  The second term can be
  treated similarly.

  \underline{Contribution iii):} $m$ and $[m'P_{k+\nu}w]_{>
    -20}$. Again, we apply the triangle inequality. For the first term
  we apply \eqref{ar4} and use the definition of the spaces
  \begin{align*}
    & \|\chi_0(\xi)(\tau-\omega(\xi)+i)^{\epsilon}\mathcal{F}[m R(D)P_k[m'P_{k+\nu}w]_{> -20}]\|_{Z_0}\\
    \leq{} &C
    \|\chi_k(\xi)(\tau-\omega(\xi)+i)^{\epsilon}\mathcal{F}[R(D)m'P_{k+\nu}w]_{>
      -20}\|_{Z_k} \\\leq{} &C \sum_{j=0}^\infty 2^{j
      \epsilon}2^{(1-\delta)j}
    \|\chi_k(\xi)\eta_j(\tau)\mathcal{F}[m'P_{k+\nu}w]\|_{L^2_{\tau,
        \xi}},
  \end{align*}
  and apply \eqref{eq:l2-bound}. Concerning the second term we apply
  \eqref{ar4} to obtain
  \begin{align*}
    & \|\chi_0(\xi)(\tau-\omega(\xi)+i)^{\epsilon}\mathcal{F}[P_kR(D)[m[m'P_{k+\nu}w]_{> -20}]]\|_{Z_0}\\
    \leq{} &C \sum_{k'\in \Z}(1+|k'|)^{-20}\|\chi_{k'}(\xi)(\tau-\omega(\xi)+i)^{\epsilon}\mathcal{F}[m' P_{k+\nu}w]_{> -20}]\|_{Z_{k'}}\\
    \leq{} &C \sum_{j=0}^{\infty}2^{j
      \epsilon}2^{(1-\delta)j}\|\eta_j(\tau)\mathcal{F}[m'
    P_{k+\nu}w]\|_{L^{2}_{\tau,\xi}}
  \end{align*}
  The claim follows from \eqref{eq:l2-bound}.
\end{proof}

Additionally, we will need a higher order commutator estimates.  Let
us define
\begin{align*}
  &\big [D^\al\partial_x;m'\big ]_{(3)}w\\:=&D^\al\partial_x (m' w)-
  m' D^\al\partial_x w -(\al+1)\partial_x(m') D^\al w +\tfrac{\al
    (\al+1)}{2}\partial_x^2(m')D^{\al-2}\partial_x w.
\end{align*}
\begin{lemma}\label{lm:comm-ext}
  Let $\sigma\in[0,2]$. Assume that $R(D)=\partial_x D^{\al}$ for
  $1<\al<2$. Assume further that $m,m'\in S^\infty_{201}$,
  $\|m\|_{S^\infty_{201}}+\|m'\|_{S^\infty_{201}}\leq 1$. Then, for
  any $k,\mu\in\mathbb{Z}$, $k \not=0$,
  \begin{equation}\label{eq:comm-ext}
    \begin{split}
      & (1+|\mu|)^{40}(1+|n_{k+\mu}|)^{2\sigma}
      \|P_{k+\mu}[mP_k \big [D^\al\partial_x;m'\big ]_{(3)}w]\|^2_{N_{k+\mu}}\\
      \leq &C
      (\|\partial_xm\|_{S^2_{200}}^2+\|\partial_xm'\|_{S^2_{200}}^2)
      \\&\cdot \sum_{\nu \in \Z}
      (1+|\nu|)^{-40}(1+|n_{k+\nu}|)^{2\sigma+2\al-3}\ln^2(2+|n_{k+\nu}|)
      \|P_{k+\nu}w\|^2_{\widetilde N_{k+\nu}}.
    \end{split}
  \end{equation}
\end{lemma}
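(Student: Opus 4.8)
The plan is to follow the scheme of the proof of Lemma \ref{lm:comm}, the main new input being the structure of the symbol of the higher-order commutator. Writing $p(\xi)=i\xi|\xi|^\al$ for the Fourier multiplier of $D^\al\partial_x$, a direct computation from the definition yields
\[
\FF\big[\big[D^\al\partial_x;m'\big]_{(3)}g\big](\xi)=C\int_{\R}\FF(m')(\xi_1)\,\widehat{g}(\xi-\xi_1)\,S(\xi,\xi-\xi_1)\,d\xi_1,
\]
where $S(a,b):=p(a)-p(b)-(a-b)p'(b)-\tfrac{(a-b)^2}{2}p''(b)$ is the second-order Taylor remainder of $p$ at $b$; it admits the integral representation $S(a,b)=\tfrac12\int_b^a(a-v)^2p'''(v)\,dv$ with $p'''(v)=c_\al|v|^{\al-2}$, which is locally integrable since $1<\al<2$. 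From this one gets the pointwise bound $|S(a,b)|\le C|a-b|^2\int_{[b,a]}|v|^{\al-2}\,dv$, together with an analogous bound for $\partial_\gamma S(\gamma-\zeta,\gamma-\xi_2)$ carrying one extra power of order $|v|^{-1}$. This plays the role of the elementary bound $|q(\xi)-q(\xi-\xi_1)|\le C|\xi_1|$ used in Lemma \ref{lm:comm}, but with three extra powers of the frequency $\xi_1=\xi_2-\zeta$ transferred by $m'$ — which is what allows the gain from $D^\al\partial_x$ (order $\al+1$) down to order essentially $\al-2$.

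As in the proof of Lemma \ref{lm:comm} I would decompose $w=\sum_{\nu\in\Z}P_{k+\nu}w$ and compute the Fourier transform of $P_{k+\mu}[mP_k\big[D^\al\partial_x;m'\big]_{(3)}(P_{k+\nu}w)]$; this yields a kernel weighted by $\chi_{k+\mu}(\xi)\chi_k(\xi-\zeta)S(\xi-\zeta,\xi-\xi_2)$, where $\zeta$ is the frequency transferred by $m$, $\xi_1$ the one transferred by $m'$, and $\xi_2=\zeta+\xi_1$. The one genuinely new feature compared with Lemma \ref{lm:comm} is that the commutator sits \emph{between} the two multiplication operators, so $S$ depends on both transferred frequencies and the clean factorization of the kernel as a convolution times $m'$ is no longer automatic. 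I would resolve this by decomposing $m$ and $m'$ into dyadic frequency blocks (as in Case 2 of the proof of Lemma \ref{lm:comm} and in the proof of Proposition \ref{Lemmab4}), on which the transferred frequencies are essentially localized and $S$ is frozen at their dyadic scales, and by extracting spatial derivatives from $m$ and $m'$ — that is, rewriting $\FF(m)(\zeta)=(i\zeta)^{-1}\FF(\partial_xm)(\zeta)$ and $\FF(m')(\xi_1)=(i\xi_1)^{-1}\FF(\partial_xm')(\xi_1)$ — which is precisely what produces the factor $\|\partial_xm\|_{S^2_{200}}^2+\|\partial_xm'\|_{S^2_{200}}^2$ on the right-hand side, the remaining factors being controlled by $\|m\|_{S^\infty_{201}},\|m'\|_{S^\infty_{201}}\le1$. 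After that the argument proceeds exactly as in Lemma \ref{lm:comm}: use the fundamental theorem of calculus in the output frequency $\xi$ to write the multiplier as $\int_{I_{k+\mu}}\mathrm{H}(\xi-\gamma)\,\partial_\gamma[\cdots]\,d\gamma$, so that for each fixed $\gamma\in I_{k+\mu}$ the object sitting between the two remaining multiplications is a restricted admissible factor whose $S^2_{100}$ norm carries the appropriate power of $(1+|n_{k+\mu}|)$; apply Proposition \ref{Lemmab4} (estimates \eqref{ar4.1} and \eqref{ar4.2}); integrate over $\gamma$ in the interval $I_{k+\mu}$ of length $\approx(1+|n_{k+\mu}|)^{1/2}$; and sum over $\mu,\nu$ using the Cauchy--Schwarz inequality together with the rapid decay of the Fourier transforms of the frequency blocks of $m$ and $m'$, which yields the weights $(1+|\mu|)^{-40}$ and, after passing from $|n_{k+\mu}|$ to $|n_{k+\nu}|$ using that only comparable $\mu,\nu$ contribute to the main term, $(1+|\nu|)^{-40}$. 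The low-frequency cases $k+\mu=0$ and $k+\nu=0$ are handled separately, following verbatim Cases 2 and 3 of the proof of Lemma \ref{lm:comm}; note that here $k\neq0$, so there is no low-frequency intermediate block to worry about.

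The main obstacle is the bookkeeping of the gains: one has to track how the three powers of $\xi_1$ coming from the Taylor remainder get distributed among the derivatives falling on $m$ and $m'$, the derivatives of the cut-offs $\chi_k$ and $\chi_{k+\mu}$ produced by $\partial_\gamma$, the integration $\int_{I_{k+\mu}}d\gamma$, and the pointwise size $|v|^{\al-2}$ of $p'''$, and then verify that they combine to \emph{exactly} the weight $(1+|n_{k+\nu}|)^{2\sigma+2\al-3}$ (with the factor $\ln^2(2+|n_{k+\nu}|)$ arising, as in Lemma \ref{lm:comm}, from summation over modulation indices). A secondary point requiring care is the singularity of $p'''$ at the origin: when the output and the $w$-frequency are comparable and large it is harmless, but when the $w$-frequency (or the intermediate frequency) is small — in particular in the $k+\nu=0$ regime — the segment $[b,a]$ in $S(a,b)=\tfrac12\int_b^a(a-v)^2p'''(v)\,dv$ passes near $0$, so the size of $S$ must be estimated via $\int_{[b,a]}|v|^{\al-2}\,dv\lesssim(1+|n_k|)^{\al-1}$ (finite precisely because $\al>1$) rather than by a naive supremum, and the resulting loss in powers of $|n_k|$ is absorbed using the extra decay in $|\nu|$.
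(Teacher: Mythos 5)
Your overall strategy is the same as the paper's: expand the symbol of $D^\al\partial_x$ to second order, represent the remainder via $p'''(v)=c_\al|v|^{\al-2}$ (the paper smooths the symbol at $\xi=0$, $q(\xi)=i\xi|\xi|^\al(1-\eta_0)(2^{10}\xi)$, instead of estimating across the origin, and performs the fundamental-theorem-of-calculus step in the $w$-frequency over $I_{k+\nu}$ rather than in the output frequency over $I_{k+\mu}$ — both differences are immaterial), convert the cubic gain in the transferred frequency into derivatives on $m'$, freeze a restricted admissible factor for each $\gamma$, apply Proposition \ref{Lemmab4}, use $|I|\lesssim(1+|n|)^{1/2}$, and sum with rapid decay; your exponent count $(\al-2)+\tfrac12=\al-\tfrac32$ per term indeed reproduces $(1+|n_{k+\nu}|)^{2\sigma+2\al-3}$ after squaring.

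Two points, however, need repair. First, the identity $\FF(m)(\zeta)=(i\zeta)^{-1}\FF(\partial_xm)(\zeta)$ is singular at $\zeta=0$ and cannot be used on the low-frequency part of $m$; moreover it is not needed there, since in your own parametrization $a-b=\xi_2-\zeta=\xi_1$, so the cubic factor in the Taylor remainder involves \emph{only} the frequency transferred by $m'$ — derivatives should be pulled off $m'$ alone (the paper puts $\mathcal{F}(\partial_x^3m')$ into the kernel), while $m$ is kept bounded via $\|m\|_{S^\infty_{201}}\le1$ in the main case. Second, and more seriously, handling the cases $k+\mu=0$ and $k+\nu=0$ ``verbatim'' as in Cases 2--3 of Lemma \ref{lm:comm} would produce bounds with no small prefactor, whereas every term in \eqref{eq:comm-ext} must carry $\|\partial_xm\|_{S^2_{200}}^2+\|\partial_xm'\|_{S^2_{200}}^2$ (indeed the commutator vanishes when $m'$ is $x$-independent, and in the application to $R_k^{(3)}$ this factor is exactly what supplies the smallness $\ep_0^2|a_k|^2$); Lemma \ref{lm:comm} has no such factor in its conclusion, so its low-frequency cases cannot be copied as stated. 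The paper's fix is simple and you should incorporate it: when $k+\mu=0$ (resp.\ $k+\nu=0$) with $|k|$ large, the factor $m$ (resp.\ $m'$) must transfer a frequency of size $\approx|n_k|$, so it may be replaced by its high-frequency part $m_{\ge0}$ (resp.\ $m'_{\ge0}$), and $\|m_{\ge0}\|_{S^2_{150}}\le C\|\partial_xm\|_{S^2_{200}}$, after which crude bounds (losing $(1+|n_k|)^{\al+1}$) are absorbed by the $(1+|\nu|)^{-80}$-type decay, exactly in the spirit of your last remark about the $k+\nu=0$ regime. With these adjustments your plan matches the paper's proof.
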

\begin{proof}[Proof of Lemma \ref{lm:comm-ext}]
  We decompose $w=\sum_{\nu \in \Z} w_{k,\nu}$ where
  $w_{k,\nu}=P_{k+\nu}w$.

  {\bf Case 1: $1\leq|k|\leq 10$.} We apply \eqref{ar4} in order to
  obtain
  \[
  (1+|\mu|)^{40}\|P_{k+\mu}[mP_k \big [D^\al\partial_x;m'\big
  ]_{(3)}w_{k,\nu}]\|_{N_{k+\mu}}\leq C \|P_k \big
  [D^\al\partial_x;m'\big ]_{(3)}w_{k,\nu}\|_{N_k},
  \]
  Further, since $k \not=0$ and
  \[||\xi|^\al \xi-|\xi-\xi_1|^\al(\xi-\xi_1)|\leq C
  |\xi_1|(|\xi|^\al+|\xi-\xi_1|^\al)\] we have
  \begin{align*}
    &\|P_k [D^\al\partial_x (m' w_{k,\nu})-m' D^\al\partial_x w_{k,\nu}]\|_{N_k}\\
    \leq &C (1+|n_{k+\nu}|)^\al \|\chi_k(\xi)(\tau-\omega(\xi)+i)^\epsilon |\mathcal{F}\partial_x m'|\ast |\mathcal{F}w_{k,\nu}|\|_{Z_{k}}\\
    \leq &C
    (1+|\nu|)^{-40}\|\partial_xm'\|_{S^2_{200}}\|w_{k,\nu}\|_{\widetilde{N}_{k+\nu}}
  \end{align*}
  where in the last step we have used \eqref{ar4.2} in case $k+\nu=0$
  and \eqref{ar4.1} otherwise. For the other two terms we have
  \[
  \|\partial_x(m') D^\al w_{k,\nu}\|_{N_k}
  +\|\partial_x^2(m')D^{\al-2}\partial_x w_{k,\nu}\|_{N_k}\leq C
  (1+\nu)^{-40}\|\partial_xm'\|_{S^2_{200}}\|w_{k,\nu}\|_{\widetilde{N}_{k+\nu}},
  \]
  by \eqref{ar4.2} in case $k+\nu=0$ and \eqref{ar4.1} otherwise.

  {\bf Case 2: $k+\mu=0$ and $|k|>10$.} In this case we may replace
  $m$ by $m_{\geq 0}$ and use \eqref{ar4.1} to obtain the upper bound
  \begin{align*}
    &\|P_{k+\mu}[mP_k \big [D^\al\partial_x;m'\big
    ]_{(3)}w_{k,\nu}\|^2_{N_{k+\mu}} \\
    \leq &C (1+|k|)^{-90}\|m_{\geq 0}\|_{S^2_{150}}^2\|P_k \big
    [D^\al\partial_x;m'\big ]_{(3)}w_{k,\nu}\|^2_{N_{k}},
  \end{align*}
  and observe that $\|m_{\geq 0}\|_{S^2_{150}}\leq C \|\partial_x
  m\|_{S^2_{200}}$.  We apply the triangle inequality and bound each
  term individually, using Proposition \ref{Lemmab4}.

  {\bf Case 3: $k+\mu \not=0$, $k+\nu=0$ and $|k|>10$.}  In this case
  we may replace $m'$ by $m'_{\geq 0}$. We use the crude bound
  (similar to \eqref{eq:crude})
  \begin{align*}
    &(1+|\mu|)^{40}\|P_{k+\mu} [mP_k \big[D^\al\partial_x;m'\big]_{(3)} w_{k,\nu}\|^2_{N_{k+\mu}}\\
    \leq &C (1+|n_k|)^{\al+1}\|P_k \big[D^\al\partial_x ; m'_{\geq
      0}\big]_{(3)}w_{k,\nu}\|^2_{N_{k}}.
  \end{align*}
  We apply the triangle inequality and use $\|m'_{\geq
    0}\|_{S^2_{150}}\leq C \|\partial_x m'\|_{S^2_{200}}$ and
  \eqref{ar4.2} to bound each of the four terms individually. We
  obtain
  \[
  \|P_k \big[D^\al\partial_x ; m'_{\geq
    0}\big]_{(3)}w_{k,\nu}\|^2_{N_{k}}\leq C \|\partial_x
  m'\|^2_{S^2_{200}}(1+|\nu|)^{-80}\|w_{k,\nu}\|^2_{\widetilde{N}_{k+\nu}}.
  \]

  {\bf Case 4: $k+\mu \not=0$ and $k+\nu\not=0$ and $|k|>10$.}  For
  the smoothed out (at $\xi=0$) symbol
  $q(\xi)=i\xi|\xi|^{\al}(1-\eta_0)(2^{10}\xi)$ we calculate
  \begin{align*}
    &q(\xi-\xi_1)-q(\xi-\xi_2)-q'(\xi-\xi_2)(\xi_2-\xi_1)
    -\tfrac{1}{2} q''(\xi-\xi_2)(\xi_2-\xi_1)^2\\
    =&(\xi_2-\xi_1)^3 I(\xi-\xi_2,\xi_2-\xi_1),
  \end{align*}
  where \[ I(\xi-\xi_2,\xi_2-\xi_1):= \int_0^1
  q'''(\xi-\xi_2+s(\xi_2-\xi_1)) \frac{(1-s)^2}{2}ds.
  \]
  We obtain
  \begin{align*}
    \mathcal{F}[P_{k+\mu}[mP_k\big [D^\al\partial_x;m'\big
    ]_{(3)}w_{k,\nu}](\xi,\tau) =&C
    \chi_{k+\mu}(\xi)\\\int_{I_{k+\nu}}
    \Big[\int_{\mathbb{R}^2}\mathbf{1}_{I_{k+\nu}}\mathcal{F}(
    w)(\xi-\xi_2,\tau-\tau_2) \mathrm{H}(\xi-\xi_2-\gamma)\cdot &
    K(\xi_2,\tau_2,\gamma)\,d\xi_2d\tau_2\Big]d\gamma,
  \end{align*}
  where $\mathrm{H}$ denotes the Heaviside-function and
  \begin{align*} K(\xi_2,\tau_2,\gamma)
    =\int_{|\xi_1|\geq |\mu|-2^{20}}&\mathcal{F}(m)(\xi_1,\tau_1)\mathcal{F}(\partial_x^3m')(\xi_2-\xi_1,\tau_2-\tau_1)\\
    &\cdot \partial_\gamma
    [\chi_k(\gamma+\xi_2-\xi_1)\chi_{k+\nu}(\gamma)
    I(\gamma,\xi_2-\xi_1)]\,d\xi_1d\tau_1.
  \end{align*}
  For fixed $\gamma \in I_{k+\nu}$ the function
  $\mathcal{F}^{-1}K(\cdot , \cdot, \gamma)$ is a restricted
  admissible factor satisfying
  \begin{equation*}
    \|\mathcal{F}^{-1}(K'(\cdot ,\cdot,\gamma))\|_{S^2_{100}}\leq C\|\partial_xm'\|_{S^2_{200}}(1+|n_{k}| )^{\al-2}(1+|\nu|)^{-40}(1+|\mu|)^{-40}.
  \end{equation*}
  We have
  \begin{align*}
    &\|P_{k+\mu}[\big  [D^\al\partial_x;m' \big  ]_{(3)} ]\|_{N_{k+\mu}}\\
    \leq&C\int_{I_{k+\nu}}\left\|\frac{\chi_{k+\mu}(\xi)}{\tau-\omega(\xi)+i}
      \int_{\mathbb{R}^2}\mathbf{1}_{I_{k+\nu}}\mathcal{F}(w)(\xi-\xi_2,\tau-\tau_2)\cdot
      K(\xi_2,\tau_2,\gamma)\,d\xi_2d\tau_2\right\|_{Z_{k+\mu}}d\gamma.
  \end{align*}
  Finally, we apply \eqref{ar4.1} to the integrand for fixed $\gamma$
  and use the fact $|I_{k+\nu}|\leq C |n_{k+\nu}|^{\frac12}$.
\end{proof}

Moreover, we will need a more specific commutator type estimate which
makes use of the bilinear estimates from Section \ref{Dyadic2}. Let us
define an extension of the low frequency part of the initial data
$\widetilde{\phi_{\mathrm{low}}}(x,t):=\eta_0(t/4)\phi_{\mathrm{low}}(x)$. Recall
that $\|\widetilde{\phi_{\mathrm{low}}}\|_{L^2}\leq C \ep_0$.

\begin{lemma}\label{lm:comm-h}
  Assume that $m,m'\in S^\infty_{201}$,
  $\|m\|_{S^\infty_{201}}+\|m'\|_{S^\infty_{201}}\leq 1$. Then, for
  any $\sigma\in[0,2]$ and $k\in\mathbb{Z}\setminus\{0\}$, $\mu \in
  \Z$,
  \begin{equation}\label{eq:comm-h}
    \begin{split}
      & (1+|\mu|)^{40}(1+|n_{k+\mu}|)^{2\sigma}
      \|P_{k+\mu}[m [P_k  (\widetilde{\phi_{\mathrm{low}}}\partial_x (m'w)- \widetilde{\phi_{\mathrm{low}}} P_k \partial_x (m'w)]]\|^2_{N_{k+\mu}}\\
      &\leq C \ep_0^2 \cdot \sum_{\nu \in \Z}
      (1+|n_{k+\nu}|)^{2\sigma}\|P_{k+\nu}w\|^2_{F_{k+\nu}}.
    \end{split}
  \end{equation}
\end{lemma}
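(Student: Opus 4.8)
For the proof of Lemma \ref{lm:comm-h} the plan is to reduce the estimate, via straightforward frequency decompositions, to the bilinear estimates of Section \ref{Dyadic2}; the argument is in fact somewhat simpler than that of Lemma \ref{lm:comm}, because the commuted factor $\widetilde{\phi_{\mathrm{low}}}$ is supported at frequencies $|\xi|\le 1/2$, so the commutator gain can be read off directly from the smoothness of $\chi_k$, and because $\|\widetilde{\phi_{\mathrm{low}}}\|_{L^2}\le C\ep_0$ supplies the required smallness. I would first decompose $w=\sum_{\nu\in\Z}P_{k+\nu}w$ and, by linearity, fix $\nu$; I would also write $\widetilde{\phi_{\mathrm{low}}}=\sum_{k'\le 0}\widetilde{\phi_{\mathrm{low}}}^{(k')}$ for its Littlewood--Paley pieces at spatial frequency $\approx 2^{k'}$, so that $\sum_{k'\le 0}\|\widetilde{\phi_{\mathrm{low}}}^{(k')}\|_{L^2}^2\le C\ep_0^2$. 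The object to estimate is then $P_{k+\mu}\big(m\,C^{k'}_{k,\nu}\big)$, where $C^{k'}_{k,\nu}=P_k\big(\widetilde{\phi_{\mathrm{low}}}^{(k')}\partial_x(m'P_{k+\nu}w)\big)-\widetilde{\phi_{\mathrm{low}}}^{(k')}P_k\partial_x(m'P_{k+\nu}w)$ is $[P_k,\widetilde{\phi_{\mathrm{low}}}^{(k')}]$ applied to $\partial_x(m'P_{k+\nu}w)$.

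Next I would deal with the outer multiplier $m$ and the inner multiplier $m'$ by splitting them in frequency exactly as in Section \ref{mult}: the pieces of $m$ at spatial frequency $\gtrsim|n_k|$ are negligible in $L^2$ (by \eqref{ar10} their $L^2_{x,t}$ norm is $\lesssim|n_k|^{-80}$) and, combined with \eqref{on31}, contribute harmlessly; the piece of $m$ at spatial frequency $\approx 2^{k''}$ with $2^{k''}\lesssim|n_k|$ enters only when $2^{k''}\approx|n_{k+\mu}-n_k|$, and since $|n_{k+\mu}-n_k|\gtrsim|\mu|$ its $L^\infty$ norm $\lesssim(1+|\mu|)^{-80}$ produces the decay in $\mu$ while spreading modulation only by $O(1)$ up to Schwartz tails; similarly for $m'$, producing the decay in $\nu$. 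This reduces matters to the core estimate in which $m,m'$ are replaced by bounded, frequency-$\lesssim 1$ multipliers, and one is left to bound $\sum_{k'\le 0}$ of the $N_{k+\mu}$-norm of $C^{k'}_{k,\nu}$, up to factors $(1+|\mu|)^{-60}(1+|\nu|)^{-60}$.

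For the core estimate I would use that the commutator $[P_k,\widetilde{\phi_{\mathrm{low}}}^{(k')}]$ has symbol $\chi_k(\xi)-\chi_k(\xi-\xi_1)$ with $|\xi_1|\approx 2^{k'}$, hence $\lesssim 2^{k'}(1+|n_k|)^{-1/2}$ by the mean value theorem and $|\partial_\xi\chi_k|\le C(1+|n_k|)^{-1/2}$; together with the factor $(1+|n_k|)$ from $\partial_x$ this is a net symbol of size $\lesssim 2^{k'}(1+|n_k|)^{1/2}$. Writing $C^{k'}_{k,\nu}$ through the atomic decompositions \eqref{repr1}--\eqref{repr2} as a $\mathrm{Low}\times\mathrm{High}\to\mathrm{High}$ convolution and applying Corollary \ref{Lemmad2}, the decisive input is that for this interaction one has $d_\al(k',k;k)\approx(1+|n_k|)^\al$, because $\big||\xi_1|^\al-|\xi_2|^\al\big|\approx(1+|n_k|)^\al$ when $|\xi_1|\approx 2^{k'}$ and $|\xi_2|\approx|n_k|$ (recall \eqref{hj100} and $\omega'(\xi)=-(\al+1)|\xi|^\al$): so Corollary \ref{Lemmad2}(b), applied so that this dispersion factor sits between the two high frequencies (rather than at the low one, where it may degenerate), yields a gain $(1+|n_k|)^{-\al/2}$; since moreover $\widetilde{\phi_{\mathrm{low}}}^{(k')}$ has modulation $\lesssim 1$ (its time factor $\eta_0(t/4)$ is Schwartz in $\tau$), the corresponding modulation index is effectively $0$; and Corollary \ref{Lemmad2}(a) supplies the measure factor $|U_{k'}|^{1/2}\approx 2^{k'/2}$ in the high-modulation régime $2^{j}\gg(1+|n_k|)^{\al+1}$ where the weights $\beta_{k,j}$ become active. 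Summing the modulation series in the $N_{k+\mu}$-norm, the net power is $\lesssim 2^{k'/2}(1+|n_k|)^{1/2-\al}$; since $\al>1$ gives $1/2-\al<-\delta$, this is $\lesssim 2^{k'/2}(1+|n_k|)^{-\delta}$, and $\sum_{k'\le 0}2^{k'/2}\|\widetilde{\phi_{\mathrm{low}}}^{(k')}\|_{L^2}\le C\ep_0$ by the Cauchy--Schwarz inequality. Hence $\|P_{k+\mu}(m\,C_{k,\nu})\|_{N_{k+\mu}}\le C\ep_0(1+|\mu|)^{-60}(1+|\nu|)^{-60}(1+|n_k|)^{-\delta}\|P_{k+\nu}w\|_{F_{k+\nu}}$ (with $C_{k,\nu}=\sum_{k'}C^{k'}_{k,\nu}$); the endpoint cases $k+\mu=0$ or $k+\nu=0$, where $k\neq 0$ forces $|n_k|\ge 4$ and hence a large frequency gap bridged only by the Schwartz tails of $\widehat m$ or $\widehat{m'}$, are easier and are handled as in Steps 1--2 of the proof of Proposition \ref{Lemmab4}, keeping track of the $X_0+Y_0$ structure of $Z_0$.

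Finally, squaring the last inequality, using $(1+|n_{k+\mu}|)^{2\sigma}\lesssim(1+|\mu|)^{4\sigma}(1+|n_k|)^{2\sigma}$ and $(1+|n_k|)^{2\sigma}\lesssim(1+|\nu|)^{4\sigma}(1+|n_{k+\nu}|)^{2\sigma}$ to move the weight to the right frequency (the extra powers of $1+|\mu|$ and $1+|\nu|$ being absorbed by the decays), summing in $\nu$ by the Cauchy--Schwarz inequality, and absorbing the remaining $(1+|\mu|)^{40}$, one obtains \eqref{eq:comm-h}. The main obstacle is making the modulation series in the $N_{k+\mu}$-norm close in the presence of the full derivative $\partial_x$: one has only the partial gain $(1+|n_k|)^{-1/2}$ from differentiating $\chi_k$, and the remaining $(1+|n_k|)^{1/2}$ must be — and, because $\al>1$, can be — defeated by the dispersive gain $(1+|n_k|)^{-\al/2}$ of Corollary \ref{Lemmad2}(b); the bookkeeping of the $\beta_{k,j}$ weights in the régime $2^{j}\gg(1+|n_k|)^{\al+1}$ (where one must switch to Corollary \ref{Lemmad2}(a)) is the most delicate point.
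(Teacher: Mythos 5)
Your core bilinear accounting is essentially sound and parallels the paper's own computation: the commutator gain $2^{k'}(1+|n_k|)^{-1/2}$ coming from the smoothness of $\chi_k$, the dispersive gain from Corollary \ref{Lemmad2}(b) with $d_\al(k',k;k)\approx(1+|n_k|)^{\al}$, and the summation $\sum_{k'\le 0}2^{k'/2}\|\phi_{\mathrm{low}}^{(k')}\|_{L^2}\le C\ep_0$ reproduce, up to bookkeeping, exactly what the paper extracts from Lemmas \ref{Lemmak1}--\ref{Lemmak3} (in the paper the same count appears as: kernel gain $(1+|n_k|)^{-1}$, $\gamma$-integration loss $|I_{k+\nu'}|\approx|n_k|^{1/2}$, bilinear gain $(1+|n_{k+\nu}|)^{1/2-\delta}(1+|n_k|)^{-1}$ against the derivative). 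Incidentally, your parenthetical about where the dispersion factor ``sits'' is backwards relative to your own (correct) formula: the non-degenerate value $d_\al\approx(1+|n_k|)^\al$ comes from comparing the low frequency with a high one, while the difference of the two comparable high frequencies degenerates to about $2^{k'}(1+|n_k|)^{\al-1}$.

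The genuine gap is the reduction step in which you discard $m$ and $m'$, justified by the claim that their low-frequency pieces spread modulation ``only by $O(1)$ up to Schwartz tails''. That claim is false: multiplying a function at frequency $\approx |n_k|$ by a factor at spatial frequency $\theta$ shifts $\tau-\omega(\xi)$ by as much as $\theta(1+|n_k|)^{\al}$, which is huge even for $\theta\sim 1$, and with the weight $(\tau-\omega(\xi)+i)^{-1}$ in the $N_{k+\mu}$-norm the transfer from high to low modulation is precisely the dangerous direction. Correspondingly, the paper offers no bound for multiplication by a bare $S^\infty$ factor acting on a general (possibly low-modulation) function: \eqref{ar4} requires the input to lie in $Z_{k_1}^{\mathrm{high}}$, while \eqref{ar4.1}--\eqref{ar4.2} require an $S^2$, i.e. $L^2$-type, factor. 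Since the commutator output $[P_k,\widetilde{\phi_{\mathrm{low}}}^{(k')}]\partial_x(m'P_{k+\nu}w)$ has no a priori modulation lower bound (its resonance size is only about $2^{k'}(1+|n_k|)^{\al}$), you cannot peel the outer $m$ off before the bilinear step, nor strip $m'$ from $w$. The paper's proof is architected around exactly this point: it never removes $m$ or $m'$, but writes the commutator via a Heaviside/fundamental-theorem-of-calculus representation in a variable $\gamma\in I_{k+\nu'}$, so that $m$, $\widetilde{\phi_{\mathrm{low}}}$, $m'$ (or $\partial_x m'$) and $\partial_\gamma$ of the cutoffs are packaged into a single low-frequency kernel $M'_\gamma$ with $\|\chi_{k_1}\mathcal{F}M'_\gamma\|_{Z_{k_1}}\le C\ep_0(1+|n_k|)^{-1}(1+|k_1|)^{-60}(1+|\mu|)^{-60}$ (and an $X_0^\delta$ bound at zero frequency), which is then fed into Lemmas \ref{Lemmak1}--\ref{Lemmak3}; the $\mu$-decay and the $\ep_0$ come from this kernel bound, not from removing $m$. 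Your mean-value bound on the symbol $\chi_k(\xi)-\chi_k(\xi-\xi_1)$ is a legitimate substitute for the $\gamma$-representation only as long as nothing multiplies the output of $P_k$; to repair the argument you need either that separation-of-variables device or a trilinear estimate that keeps $m$ (and $m'$) inside the Fourier integral, grouped with the $L^2$-bounded factor $\widetilde{\phi_{\mathrm{low}}}$.
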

\begin{proof}[Proof of Lemma \ref{lm:comm-h}]
  We decompose $m'w=\sum_{\nu' \in \Z}P_{k+\nu'}(m'w)$ and there are
  nontrivial contributions only if $|\nu'|\leq 5$.  It suffices to
  consider the case where $k+\mu\not =0$ and $|k|>10$ because
  otherwise the estimate follows from \eqref{ar4.1} and
  \eqref{ar4.2}. We replace $m$ with $m_{\geq k'}$ for
  $k'=\log_2(1+|n_\mu|)-10$ in case $|\mu|\geq 10$.  We compute the
  Fourier transform
  \begin{align*}
    & \mathcal{F}[P_{k+\mu}[m[P_k (\widetilde{\phi_{\mathrm{low}}}\partial_x P_{k+\nu'}(m'w))- \widetilde{\phi_{\mathrm{low}}} P_k \partial_x P_{k+\nu'}(m'w)]](\xi,\tau)\\
    =&C \chi_{k+\mu}(\xi)\int_{I_{k+\nu'}}I(\xi,\tau,\gamma)d\gamma,
  \end{align*}
  where $\mathrm{H}$ denotes the Heaviside-function,
  $I(\xi,\tau,\gamma)$ is defined as
  \[
  I(\xi,\tau,\gamma):=\int_{\mathbb{R}^2}\mathcal{F}\partial_x
  (m'w)(\xi-\xi_2,\tau-\tau_2) \mathrm{H}(\xi-\xi_2-\gamma)
  \mathcal{F}M(\xi_2,\tau_2,\gamma)\,d\xi_2d\tau_2
  \]
  and
  \begin{multline*} \mathcal{F}M(\xi_2,\tau_2,\gamma) =\int
    \mathcal{F}(m)(\xi_1,\tau_1)\mathcal{F}(\widetilde{\phi_{\mathrm{low}}})(\xi_2-\xi_1,\tau_2-\tau_1)
    \\ \cdot\partial_\gamma
    [(\chi_k(\gamma+\xi_2-\xi_1)-\chi_k(\gamma))\chi_{k+\nu'}(\gamma)]\,d\xi_1d\tau_1.
  \end{multline*}
  It follows
  \begin{align*}
    & \sum_{|\nu'|\leq 5}\|\mathcal{F}[P_{k+\mu}[m[P_k (\widetilde{\phi_{\mathrm{low}}}\partial_x P_{k+\nu'}(m'w))- \widetilde{\phi_{\mathrm{low}}} P_k \partial_x P_{k+\nu'}(m'w)]](\xi,\tau)\|_{Z_{k+\mu}}\\
    \leq &C \sum_{|\nu'|\leq 5}\int_{I_{k+\nu'}}\Big\|
    \chi_{k+\mu}(\xi) \mathcal{F}[\partial_x (m'w)
    M(\cdot,\cdot,\gamma)](\tau,\xi)\Big\|_{Z_{k+\mu} }d\gamma\\
    \leq &C \sup_{|\nu'|\leq 5}\Big[ \int_{I_{k+\nu'}}\Big\|
    \chi_{k+\mu}(\xi) \mathcal{F}[\partial_x (w)
    m'M(\cdot,\cdot,\gamma)](\tau,\xi)\Big\|_{Z_{k+\mu} }d\gamma \\
    &+ \int_{I_{k+\nu'}}\Big\| \chi_{k+\mu}(\xi) \mathcal{F}[w
    \partial_x (m')M(\cdot,\cdot,\gamma)](\tau,\xi)\Big\|_{Z_{k+\mu}
    }d\gamma\Big]
  \end{align*}
  Let $M'_\gamma$ denote either $m'M(\cdot,\cdot,\gamma)$ or
  $\partial_x (m')M(\cdot,\cdot,\gamma)$.  For $\gamma \in I_{k+\nu'}$
  and $|\nu'|\leq 5$ one can show that
  \[
  \|\chi_{k_1} \mathcal{F}M'_\gamma\|_{Z_{k_1}}\leq C \ep_0
  (1+|n_k|)^{-1} (1+|k_1|)^{-60}(1+|\mu|)^{-60}
  \]
  if $k_1\not=0$, and
  \[
  \|\chi_{0} \mathcal{F}M'_\gamma\|_{X_0^\delta}\leq C \ep_0
  (1+|n_k|)^{-1}(1+|\mu|)^{-60}.
  \]
  We decompose $w=\sum_{\nu \in \Z} P_{k+\nu}w$ and apply Lemmas
  \ref{Lemmak1}-\ref{Lemmak3} and obtain for $\mu \in \Z$, $\mu
  +k\not=0 $ and $\sigma'\in \{0,1\}$:
  \begin{align*}
    &\Big\| \chi_{k+\mu}(\xi) \mathcal{F}[P_{k+\nu}
    (\partial_x^{\sigma'} w)
    M'_\gamma](\tau,\xi)\Big\|_{Z_{k+\mu}} \\
    \leq & C \ep_0 (1+|\mu-\nu|)^{-60}(1+|\mu|)^{-60}(1+|n_{k
      +\nu}|)^{1/2-\delta}(1+|n_k|)^{-1}\|P_{k+\nu}w\|_{F_{k+\nu}}
  \end{align*}
  Note that $|I_{k+\nu'}|\leq C |n_{k+\nu'}|^\frac12 \approx
  |n_k|^\frac12$ for $|\nu'|\leq 5$.  The claim follows by summing up
  with respect to $\nu$ and Cauchy-Schwarz.
\end{proof}

 \section{Proof of Proposition \ref{Lemmat2}}\label{lastsection}
 The properties in Part (a) are standard, cf. \cite[Lemma 4.2]{IoKeTa}
 and its proof.  We will only show the a priori estimate
 \eqref{toshow1} because the estimate \eqref{toshow2} for differences
 is very similar (recall that $\phi_{\rm low}=\phi_{\rm low}'$).

 We need to estimate the following expressions, see \eqref{rh5} and
 \eqref{rh21},
 \begin{equation}\label{eq:r0}
   R_0=-P_0\partial_x (\phi_{\mathrm{low}}\cdot v)-P_0\partial_x(v^2/2)-D^\al\partial_xP_0(\phi_{\mathrm{low}})-P_0\partial_x(\phi_{\mathrm{low}}^2/2),
 \end{equation}
 and for $k \in \Z\setminus \{0\}$
 \begin{equation*}
   R_k=R_k^{(1)}+R_k^{(2)}+R_k^{(3)}+R_k^{(4)}+R_k^{(5)},
 \end{equation*}
 where
 \begin{align}
   R_k^{(1)}:=&-e^{-ia_k\Psi}P_k\partial_x(v^2/2)\label{eq:r1}\\
   R_k^{(2)}:=  &-\phi_{\mathrm{low}}[\partial_xv_k-D^\alpha v_k\cdot (i n_k|n_k|^{-\alpha})]]\label{eq:r2}\\
   R_k^{(3)}:=    &-[e^{-ia_k\Psi}D^\alpha\partial_x(e^{ia_k\Psi}v_k)-D^\al\partial_x(v_k)-(\alpha+1)D^\alpha v_k\cdot (ia_k\Psi')]\label{eq:r3}\\
   R_k^{(4)}:=    &-e^{-ia_k\Psi}[P_k(\phi_{\mathrm{low}}\cdot \partial_xv)-\phi_{\mathrm{low}}\cdot \partial_x(P_kv)]\label{eq:r4}\\
   R_k^{(5)}:= &-[ia_k\phi^2_{\mathrm{low}}\cdot
   v_k+e^{-ia_k\Psi}P_k(v\cdot\partial_x\phi_{\mathrm{low}})].\label{eq:r5}
 \end{align}

 We fix extensions $\widetilde{\widetilde{v}}_k$ of the functions
 $v_k$ such that $\|\widetilde{\widetilde{v}}_k\|_{\F^{\sigma'}}\leq
 C\|v_k\|_{\F^{\sigma'}(T')}$, $\sigma'\in\{0,\sigma\}$, and $\supp
 \widetilde{\widetilde{v}}_k \subset \mathbb{R}_x \times [-4,4]$. For
 any interval $[a,b]\subseteq\mathbb{R}$ let
 \begin{equation*}
   P_{[a,b]}=\sum_{k\in \mathbb{Z}\cap [a,b]} P_k.
 \end{equation*}
 By \eqref{rh50} and the commutator estimate \eqref{eq:comm-a} the
 function
 \[\widetilde{v}_k=e^{-ia_k\Psi}P_{[k-1,k+1]}(e^{ia_k\Psi}\widetilde{\widetilde{v}}_k)\]
 is another extension of $v_k$ with the properties, supported in
 $\mathbb{R}_x \times [-4,4]$ and verifying
 \begin{equation}\label{bh10}
   \begin{split}
     &\|\widetilde{v}_k\|_{\F^{\sigma'}}\leq
     C\|v_k\|_{\F^{\sigma'}(T')},\qquad\sigma=\{0,\sigma'\},\\
     &\widetilde{v}_k=e^{-ia_k\Psi}P_{[k-2,k+2]}(e^{ia_{k}\Psi}\widetilde{v}_{k}).
   \end{split}
 \end{equation}
 We define
 \begin{equation}\label{eq:v-ext}
   \widetilde{v}=\sum_{k\in\mathbb{Z}}e^{ia_k\Psi}\widetilde{v}_k,
 \end{equation}

 We look at each of the contributions \eqref{eq:r0}-\eqref{eq:r5}
 separately.

 \underline{Contribution of \eqref{eq:r0}:} Recall the definition
 $\widetilde{\phi_{\mathrm{low}}}(x,t)=\phi_{\mathrm{low}}(x)\eta_0(t/4)$. We
 define the extension
 \[
 \widetilde{R}_0= -P_0\partial_x (\widetilde{\phi_{\mathrm{low}}}\cdot
 \widetilde{v})-P_0\partial_x(\widetilde{v}^2/2)
 -D^\al\partial_xP_0(\widetilde{\phi_{\mathrm{low}}})-P_0\partial_x(\widetilde{\phi_{\mathrm{low}}}^2/2),
 \]
 Obviously, it holds
 \begin{align*}
   &\|P_0[D^\al\partial_x\widetilde{\phi_{\mathrm{low}}}+\partial_x\widetilde{\phi_{\mathrm{low}}}^2/2]\|_{\N^\sigma}\\
   &\leq C\sum_{k'=-\infty}^2 2^{-k'}\big[\|\widetilde{\eta}_{k'}\mathcal{F}[D^\al\partial_x\widetilde{\phi_{\mathrm{low}}}]\|_{L^2_{\xi,\tau}}+\|\widetilde{\eta}_{k'}\mathcal{F}[P_0\partial_x\widetilde{\phi_{\mathrm{low}}}^2]\|_{L^2_{\xi,\tau}}\big]\\
   &\leq
   C(\|\phi_{\mathrm{low}}\|_{L^2}+\|\phi_{\mathrm{low}}\|^2_{L^2}).
 \end{align*}
 To estimate the contribution from the first two terms, we estimate
 \begin{align*}
   &\|P_0\partial_x (\widetilde{\phi_{\mathrm{low}}}\cdot \widetilde{v})+P_0\partial_x(\widetilde{v}^2/2) \|_{\N^\sigma}\\
   &\leq C\sum_{k'=-\infty}^2 \big[\|\widetilde{\eta}_{k'}\mathcal{F}[\widetilde{\phi_{\mathrm{low}}}\cdot \widetilde{v}]\|_{L^2_{\xi,\tau}}+\|\widetilde{\eta}_{k'}\mathcal{F}[\widetilde{v}^2]\|_{L^2_{\xi,\tau}}\big]\\
   &\leq C\ep_0 \|\widetilde{v}\|_{L^\infty_t L^2_x}+C\|\widetilde{v}\|_{L^\infty_t L^2_x}^2\\
   &\leq C \ep_0 \big[\sum_{k \in \Z}
   \|\widetilde{v}_k\|^2_{L^\infty_t L^2_x}\big]^{1/2}+C \sum_{k \in
     \Z}\|\widetilde{v}_k\|_{L^\infty_t L^2_x}^2.
 \end{align*}
 Using \eqref{hh80}, the two estimates above imply
 \begin{equation}\label{endo0}
   \|\widetilde{R}_0\|^2_{\N^\sigma}\leq C\|\phi\|_{H^0}^2+C \sum_{k \in \Z}\|\widetilde{v}_k\|^2_{\F^0}\big(\ep_0^2+\sum_{k \in \Z}\|\widetilde{v}_k\|^2_{\F^0}\big).
 \end{equation}

 \underline{Contribution of \eqref{eq:r1}:} We define
 \begin{equation*}
   \widetilde{R}_k^{(1)}=-e^{-ia_k\Psi}P_k\partial_x(\widetilde{v}^2/2).
 \end{equation*}
 This is an extension of $R_k^{(1)}$. An application of the commutator
 estimate \eqref{eq:comm-b} yields
 \begin{equation}\label{bh20}
   \begin{split}
     \sum_{k\in\mathbb{Z}\setminus\{0\}}\|\widetilde{R}_k^{(1)}\|^2_{\N^\sigma}
     \leq &C \sum_{k\in\mathbb{Z}}(1+|n_k|)^{2\sigma}
     \|\partial_x P_k(e^{-ia_{k}\Psi}(\widetilde{v}^2)\|_{N_k}^2\\
     +&C \sum_{k\in\mathbb{Z}}(1+|n_k|)^{2\sigma}\|\partial_x
     P_k[\widetilde{v}^2]\|_{N_k}^2
   \end{split}
 \end{equation}
 For $k',\nu'\in\mathbb{Z}$ we define
 $w_{k',\nu'}=P_{k'}\widetilde{v}_{k'+\nu'}$, so
 \begin{equation*}
   \widetilde{v}=\sum_{k',\nu'\in\mathbb{Z}}e^{ia_{k'+\nu'}\Psi}w_{k',\nu'}.
 \end{equation*}
 Using Lemma \ref{Lemmat1} (we ignore the $\delta/4$ gains) and this
 identity, the right-hand side of \eqref{bh20} is dominated by
 \begin{align*}
   C\sum_{\nu_1,\nu_2\in\mathbb{Z}}(1+|\nu_1|)^2(1+|\nu_2|
   )^2\Big(\sum_{k_1\in\mathbb{Z}}
   \|w_{k_1,\nu_1}\|_{F_{k_1}}^2\Big)\Big(\sum_{k_2\in\mathbb{Z}}(1+|n_{k_2}|)^{2\sigma}\|w_{k_2,\nu_2}\|_{F_{k_2}}^2\Big)
 \end{align*}
 For $|\nu|\leq 10$ fixed and $\sigma'\in\{0,\sigma\}$ we estimate
 simply
 \begin{equation*}
   \sum_{k\in\mathbb{Z}}(1+|n_{k}|)^{2\sigma'}\|w_{k,\nu}\|_{F_{k}}^2
   =\sum_{k\in\mathbb{Z}}(1+|n_{k-\nu}|)^{2\sigma'}\|P_{k-\nu}\widetilde{v}_k\|_{F_{k-\nu}}^2\leq \sum_{k\in\mathbb{Z}}\|\widetilde{v}_k\|_{\F^{\sigma'}}^2.
 \end{equation*}
 For $|\nu|\geq 11$ and $\sigma'\in\{0,\sigma\}$ we estimate, using
 \eqref{bh10} and Lemma \ref{lm:comm}
 \begin{align*}
   &\sum_{k\in\mathbb{Z}}(1+|n_{k}|)^{2\sigma'}\|w_{k,\nu}\|_{F_{k}}^2
   \\=&\sum_{k\in\mathbb{Z}}(1+|n_{k-\nu}|)^{2\sigma'}
   \|P_{k-\nu}[e^{-ia_k\Psi}P_{[k-2,k+2]}(e^{ia_{k}\Psi}\widetilde{v}_{k})]\|_{F_{k-\nu}}^2\\\leq&
   C(1+|\nu|)^{-40}\sum_{k\in\mathbb{Z}}\|\widetilde{v}_k\|_{\F^{\sigma'}}^2
 \end{align*}
 Therefore
 \begin{equation}\label{endo1}
   \sum_{k\in\mathbb{Z}\setminus\{0\}}\|\widetilde{R}_k^{(1)}\|^2_{\N^\sigma}\leq C\big(\sum_{k\in\mathbb{Z}}\|\widetilde{v}_k\|_{\F^0}^2\big)\big(\sum_{k\in\mathbb{Z}}\|\widetilde{v}_k\|_{\F^\sigma}^2\big).
 \end{equation}

 \underline{Contribution of \eqref{eq:r2}:} We define the extension
 \begin{equation*}
   \widetilde{R}_k^{(2)}:=-\widetilde{\phi_{\mathrm{low}}}[\partial_x\widetilde{v}_k
   -D^{\alpha}\widetilde{v}_k\cdot (i n_k|n_k|^{-\alpha})]
 \end{equation*}
 where we set
 $\widetilde{\phi_{\mathrm{low}}}(x,t):=\eta_0(t/4)\phi_{\mathrm{low}}(x)$.
 We note that for small $\delta>0$
 \[\|\widetilde{\phi_{\mathrm{low}}}\|_{S^{2}_{150}}+\|\mathcal{F}\widetilde{\phi_{\mathrm{low}}}\|_{X^\delta_0}\leq
 C \ep_0.\] We define
 \begin{equation*}
   \widetilde{u}_k:=D_k\widetilde{v}_k \text{ where } D_k:=\partial_x- (i n_k|n_k|^{-\alpha}) \cdot D^{\alpha}
 \end{equation*}
 and we decompose
 \[
 \widetilde{u}_k=\sum_{\nu \in \Z} \widetilde{u}_{k,\nu} \text{ where
 } \widetilde{u}_{k,\nu}=P_\nu \widetilde{u}_{k}.
 \]
 Now, by definition
 \begin{equation*}
   \sum_{k\in \Z \setminus\{0\}}
   \|\widetilde{R}_k^{(2)}\|^2_{\N^{\sigma}} \leq \sum_{k\in \Z \setminus\{0\}}
   \sum_{k_1 \in \Z}(1+|n_{k_1}|)^{2\sigma}
   \left(\sum_{|\nu -k_1|\leq 5} \|P_{k_1}[\widetilde{\phi_{\mathrm{low}}}\widetilde{u}_{k,\nu}]\|_{N_{k_1}}\right)^2.
 \end{equation*}
 If $|k_1|\leq 10$ the estimates \eqref{ar4.1} and \eqref{ar4.2} imply
 that
 \begin{equation*}
   \sum_{|\nu-k_1|\leq 5}\|P_{k_1}[\widetilde{\phi_{\mathrm{low}}}\widetilde{u}_{k,\nu}]\|_{N_{k_1}}\leq C \ep_0 \sum_{|\nu|\leq 15}\|\widetilde{u}_{k,\nu}\|_{F_\nu}\leq C \ep_0\|\widetilde{v}_k\|_{\F^\sigma}
 \end{equation*}
 If $|k_1|>10$ we use estimates \eqref{hj1} and \eqref{hj1.2} to
 obtain
 \begin{equation*}(1+|n_{k_1}|)^{\sigma}\sum_{|\nu-k_1|\leq
     5}\|P_{k_1}[\widetilde{\phi_{\mathrm{low}}}\widetilde{u}_{k,\nu}]\|_{N_{k_1}}\leq
   C (1+|n_{k_1}|)^{\sigma-1/2-\delta} \ep_0 \sum_{|\nu-k_1|\leq
     5}\|\widetilde{u}_{k,\nu}\|_{F_\nu}.
 \end{equation*}
 The symbol of $P_\nu D_k$
 \begin{equation*}
   m(\xi):=\chi_{\nu}(\xi)\left(i\xi-\frac{|\xi|^{\alpha}}{|n_k|^{\alpha}}in_k\right)
 \end{equation*}
 satisfies $|m(\xi)|\leq C
 \chi_{\nu}(\xi)(1+|k-\nu|)^{2\alpha}(1+|n_{\nu}|)^{\frac12}$ by
 definition of the sequence $n_k$, see \eqref{rh30}.  Therefore we
 conclude that
 \begin{equation*}
   \sum_{k \in \Z \setminus\{0\}}\|\widetilde{R}_k^{(2)}\|^2_{\N^{\sigma}} \leq C\ep_0^2 \sum_{k \in \Z} \sum_{k_1 \in \Z}(1+|n_{k_1}|)^{2\sigma}(1+|k-k_1|)^{10}\|P_{k_1}\widetilde{v}_{k}\|^2_{F_{k_1}}.
 \end{equation*}
 and we use \eqref{bh10} and the commutator estimate \eqref{eq:comm-a}
 to obtain
 \begin{equation}\label{endo2}
   \sum_{k\in \Z \setminus\{0\}} \|\widetilde{R}_k^{(2)}\|^2_{\N^{\sigma}}\leq C\ep_0^2
   \sum_{k\in\Z}\|\widetilde{v}_k\|^2_{\F^{\sigma}}.
 \end{equation}

 \underline{Contribution of \eqref{eq:r3}:} As above, we define an
 extension
 \[
 \widetilde{R}_k^{(3)}:=
 -[e^{-ia_k\Psi}D^\alpha\partial_x(e^{ia_k\Psi}\widetilde{v}_k)-D^\al\partial_x(\widetilde{v}_k)-(\alpha+1)D^\alpha
 \widetilde{v}_k\cdot (ia_k\Psi')]
 \]
 for $k \not =0$. We use the property \eqref{bh10} and apply the
 commutator estimate \eqref{eq:comm-ext} to obtain
 \begin{align*}
   \|\widetilde{R}_k^{(3)}\|_{\N^\sigma}^2
   \leq & C \ep_0^2 |a_k|^2 \sum_{\nu \in \Z}(1+|n_{k+\nu}|)^{2\sigma+2\al-5/2}(1+|\nu|)^{-40}\|P_{k+\nu}\widetilde{v}_k\|_{\widetilde{N}_{k+\nu}}^2\\
   &+C \|e^{-ia_k \Psi}\partial_x^2 (e^{ia_k \Psi})
   D^{\al-2}\partial_x \widetilde{v}_k\|_{\N^\sigma}^2.
 \end{align*}
 Since $|a_k|= |n_k|^{1-\al}$, the first term is bounded by
 $\ep_0^2\|\widetilde{v}_k\|_{\N^\sigma}$.  Concerning the second
 term, we note that the restriction of $e^{-ia_k \Psi}\partial_x^2
 (e^{ia_k \Psi})$ to the time interval $[-4,4]$ is a restricted
 admissible factor with norm less than $C\ep_0 |a_k|$ and estimate
 \eqref{ar4.1} yields
 \[
 \|P_{[k-2,k+2]}[e^{-ia_k \Psi}\partial_x^2 (e^{ia_k \Psi})
 D^{\al-2}\partial_x \widetilde{v}_k]\|_{\N^\sigma}\leq C \ep_0
 \|\widetilde{v}_k\|_{\N^\sigma}.
 \]
 if $|k|\leq 5$, and for $|k|>5$ Lemma \ref{Lemmat1} implies that
 \[
 \sum_{|\mu|\leq 2}(1+|n_{k+\mu}|)^{2\sigma}\|P_{k+\mu}[e^{-ia_k
   \Psi}\partial_x^2 (e^{ia_k \Psi}) D^{\al-2}\partial_x
 \widetilde{v}_k]\|^2_{N_{k+\mu}}\leq C \ep_0
 \|\widetilde{v}_k\|_{\F^\sigma}.
 \]
 In conclusion, we obtain
 \begin{equation}\label{endo3}
   \sum_{k \in \Z \setminus \{0\}}\|\widetilde{R}_k^{(3)}\|_{\N^\sigma}^2\leq C \ep_0^2 \sum_{k \in \Z}\|\widetilde{v}_k\|_{\F^\sigma}^2.
 \end{equation}

 \underline{Contribution of \eqref{eq:r4}:} As above, we define the
 extension
 \[
 \widetilde{R}_k^{(4)}:=
 -e^{-ia_k\Psi}[P_k(\widetilde{\phi_{\mathrm{low}}}\cdot \partial_x\widetilde{v})-\phi_{\mathrm{low}}\cdot \partial_x(P_k\widetilde{v})],
 \]
 see \eqref{eq:v-ext}. Using \eqref{bh10} and Lemma \ref{lm:comm-h} we
 obtain
 \begin{align*}
   \|\widetilde{R}_k^{(4)}\|_{\N^\sigma}\leq & \sum_{\genfrac{}{}{0pt}{}{k_1\in \Z}{|k_1-k|\leq 5}}\|e^{-ia_k\Psi}[P_{k}(\widetilde{\phi_{\mathrm{low}}} \partial_x(e^{ia_{k_1}\Psi} \widetilde{v}_{k_1}))-\phi_{\mathrm{low}}\partial_x(P_{k}(e^{ia_{k_1}\Psi} \widetilde{v}_{k_1}))]\|_{\N^\sigma}\\
   \leq &C \ep_0 \sum_{\genfrac{}{}{0pt}{}{k_1\in \Z}{|k_1-k|\leq
       5}}\|\widetilde{v}_{k_1}\|_{\F^\sigma}.
 \end{align*}
 Summing up with respect to $k$ yields
 \begin{equation}\label{endo4}
   \sum_{k \in \Z\setminus \{0\}}\|\widetilde{R}_k^{(4)}\|^2_{\N^\sigma}\leq C \ep_0^2 \sum_{k\in \Z}\|\widetilde{v}_{k}\|^2_{\F^\sigma}.
 \end{equation}

 \underline{Contribution of \eqref{eq:r5}:} We define the extension
 \[
 \widetilde{R}_k^{(5)}:= -[ia_k\widetilde{\phi_{\mathrm{low}}}^2\cdot
 \widetilde{v}_k+e^{-ia_k\Psi}P_k(\widetilde{v}\cdot\partial_x\widetilde{\phi_{\mathrm{low}}})].
 \]
 Estimates \eqref{ar4.1} and \eqref{ar4.2} imply that
 \[
 \|ia_k \widetilde{\phi_{\mathrm{low}}}^2\cdot
 \widetilde{v}_k\|_{\N^\sigma}\leq C \ep^2_0
 \|\widetilde{v}_k\|_{\N^\sigma}.
 \]
 Concerning the second term we use \eqref{bh10} and \eqref{eq:comm-b}
 and obtain
 \begin{align*}
   &\|e^{-ia_k\Psi}P_k(\widetilde{v}\cdot\partial_x\widetilde{\phi_{\mathrm{low}}})\|_{\N^\sigma}
   \\\leq&
   \sum_{\genfrac{}{}{0pt}{}{k_1\in \Z}{|k_1-k|\leq 5}} \|e^{-ia_k\Psi}P_k(\widetilde{v}_{k_1} \cdot e^{ia_{k_1}\Psi}\partial_x\widetilde{\phi_{\mathrm{low}}})\|_{\N^\sigma}\\
   \leq{} &C \sum_{\genfrac{}{}{0pt}{}{k_1\in \Z}{|k_1-k|\leq 5}}
   \sup_{a_{k,k_1}\in [-4,4]}\|\widetilde{v}_{k_1} \cdot
   e^{ia_{k,k_1}\Psi}\partial_x\widetilde{\phi_{\mathrm{low}}}\|_{\N^\sigma}
 \end{align*}
 It follows from \eqref{ar4.1} and \eqref{ar4.2} that
 \[
 \sup_{a_{k,k_1}\in [-4,4]}\|P_{0}[\widetilde{v}_{k_1} \cdot
 e^{ia_{k,k_1}\Psi}\partial_x\widetilde{\phi_{\mathrm{low}}}]\|_{\N^\sigma}\leq
 C \ep_0 \|\widetilde{v}_{k_1}\|_{\N^\sigma}.
 \]
 Moreover, we have the trivial bound
 \begin{align*}
   &\sup_{a_{k,k_1}\in [-4,4]}\|(I-P_0) [\widetilde{v}_{k_1} \cdot e^{ia_{k,k_1}\Psi}\partial_x\widetilde{\phi_{\mathrm{low}}}]\|_{\N^\sigma}\\
   \leq{} & C \sup_{a_{k,k_1}\in [-4,4]}\|\widetilde{v}_{k_1} \cdot
   e^{ia_{k,k_1}\Psi}\partial_x\widetilde{\phi_{\mathrm{low}}}\|_{L^\infty_t
     H^\sigma_x} \leq C \ep_0 \|\widetilde{v}_{k_1}\|_{\F^\sigma}.
 \end{align*}
 By summing up with respect to $k$ we conclude
 \begin{equation}\label{endo5}
   \sum_{k\in \Z \setminus \{0\}} \|\widetilde{R}_k^{(5)}\|^2_{\N^\sigma}\leq C \ep_0^2\sum_{k \in \Z}\|\widetilde{v}_k\|^2_{\F^\sigma}.
 \end{equation}

 In summary, the estimate \eqref{toshow1} now follows from
 \eqref{endo1}, \eqref{endo2}, \eqref{endo3}, \eqref{endo4} and
 \eqref{endo5} and \eqref{endo0}.

\end{document}